 \theoremstyle{plain}
\newtheorem{thm}{Theorem}[section]
\theoremstyle{plain}
  \newtheorem{prop}[thm]{Proposition}
\theoremstyle{plain}
 \newtheorem{lemma}[thm]{Lemma}
\theoremstyle{plain}
\theoremstyle{plain}
\theoremstyle{plain}
\newtheorem{cor}[thm]{Corollary}
\theoremstyle{definition}
\theoremstyle{definition}
 \theoremstyle{definition}
\theoremstyle{remark}
\newtheorem{rmk}[thm]{Remark}
\numberwithin{equation}{section}
\tikzset{ closed/.style = {decoration = {markings, mark = at position 0.5 with { \node[transform shape, xscale = .8, yscale=.4] {/}; } }, postaction = {decorate} },
open/.style = {decoration = {markings, mark = at position .5 with { \node[transform shape, scale =1.2] {$\circ$}; } }, postaction = {decorate} }
}
\newcommand{\Z}{\mathbb{Z}}
\newcommand{\Q}{\mathbb{Q}}
\newcommand{\Qp}{\mathbb{Q}_p}
\newcommand{\R}{\mathbb{R}}
\newcommand{\F}{\mathbb{F}}
\newcommand{\fm}{\mathfrak{m}}
\newcommand{\bA}{\mathbb{A}}
\newcommand{\bT}{\mathbb{T}}
\newcommand{\cO}{\mathcal{O}}
\newcommand{\cR}{\mathcal{R}}
\newcommand{\eps}{\varepsilon}
\newcommand{\Gal}{\mathrm{Gal}}
\newcommand{\Hom}{\mathrm{Hom}}
\newcommand{\Res}{\mathrm{Res}}
\newcommand{\Ind}{\mathrm{Ind}}
\newcommand{\GL}{\mathrm{GL}}
\newcommand{\Adm}{\mathrm{Adm}}
\newcommand{\semis}{\mathrm{ss}}
\newcommand{\Fp}{\F_p}
\newcommand{\tld}[1]{\widetilde{#1}}
\newcommand{\JH}{\mathrm{JH}}
\newcommand{\rbar}{\overline{r}}
\newcommand{\rhobar}{\overline{\rho}}
\newcommand{\defeq}{\stackrel{\textrm{\tiny{def}}}{=}}
\newcommand{\ovl}[1]{\overline{#1}}
\newif\iffinalrun
  \newcommand{\mar}[1]{}
  \newcommand{\mar}[1]{\marginpar{\raggedright\tiny #1}}
\DeclareMathOperator{\Conv}{Conv}
\newcommand{\ra}{\rightarrow}
\newcommand{\onto}{\twoheadrightarrow}
\title{Generic decompositions of Deligne--Lusztig representations}
\author{Daniel Le}
\address{Department of Mathematics,
Purdue University,
150 N. University Street, 
West Lafayette, IN 47907-2067}
\email{ledt@purdue.edu}
\author{Bao V.~Le Hung}
\address{Department of Mathematics,
Northwestern University, 
2033 Sheridan Road\\
Evanston, IL 60208, USA}
\email{lhvietbao@googlemail.com}
\author{Brandon Levin}
\address{Department of Mathematics,
Rice University, 
6100 Main Street,
Houston, Texas 77005, USA}
\email{bl70@rice.edu}
\author{Stefano Morra}
\address{LAGA, UMR 7539, CNRS, Universit\'e Paris 13 - Sorbonne Paris Cit\'e, 
Universit\'e de Paris 8,
99 avenue Jean Baptiste Cl\'ement,
93430 Villetaneuse,
France }
\email{morra@math.univ-paris13.fr}
\begin{document}

\begin{abstract}
Let $G_0$ be a reductive group over $\F_p$ with simply connected derived subgroup, connected center and Coxeter number $h+1$. 
We extend Jantzen's generic decomposition pattern from $(2h-1)$-generic to $h$-generic Deligne--Lusztig representations, which is optimal.
We also prove several results on the ``obvious'' Jordan--H\"older factors of general Deligne--Lusztig representations.
As an application we improve the weight elimination result of \cite{LLL}.
\end{abstract} 

\maketitle

\section{Introduction}
\label{sec:intro}
Let $G_0$ be a reductive group over $\F_p$ with simply connected derived subgroup. An important problem is to understand the representations of the finite group of Lie type $G_0(\F_p)$.  
In characteristic 0, Deligne--Lusztig gave a beautiful geometric construction of characters of $G_0(\F_p)$ which effectively describes all irreducibles.  
It is natural to ask how characteristic 0 irreducibles decompose modulo a prime $\ell$.  When $\ell \neq p$, this problem has been studied extensively see e.g.~\cite{bonnafe-rouquier,bonnafe-dat-rouquier}.  
In contrast, the defining characteristic $\ell = p$ case seems underdeveloped, despite its connections to number theory and more specifically the study of congruences of automorphic forms. 
The main result here, due to Jantzen \cite{jantzen}, describes the mod $p$ reduction of \emph{sufficiently generic} Deligne--Lusztig representations.

Let $T$ be a maximally split $\Fp$-rational maximal torus of $G\defeq G_0\otimes_{\Fp}\ovl{\F}_p$, $\mu$ a character of $T$ and $s$ an element in the Weyl group of $W$ (with respect to $T$). 
To this data we can associate an $\Fp$-rational maximal torus $T_s$ and a $W(\ovl{\F}_p)^\times$-valued character $\theta(s,\mu)$ of $T_s(\Fp)$ and thus a Deligne--Lusztig representation $R_s(\mu)$ (see \S \ref{sec:ing}). 
Recall that in characteristic $p$,  after choosing a Borel subgroup, $p$-restricted highest weights $\lambda$ parametrize irreducible representations $F(\lambda)$ of $G_0(\F_p)$.
We refer to these as Serre weights.
Let $h+1$ denote the Coxeter number of $G_0$ and $\eta$ be (a lift of) the half of the sum of the positive roots.
When $\mu-\eta$ is $(2h-1)$-deep in the base $p$-alcove $C_0$ (anchored at $-\eta$), Jantzen \cite{jantzen} (and Gee, Herzig and Savitt \cite{herzig-duke,GHS} for reductive groups) gives a formula for the reduction $\ovl{R}_s(\mu)$ of $R_s(\mu)$ in terms of Frobenius kernel multiplicities.

A basic feature of Jantzen's formula is that the highest weights of the Jordan--H\"older factors is given by universal combinatorial formulas in $s,\mu$, which in particular implies that the length of the reduction is independent of $s$ and $\mu$.
In order for these universal formulas (as $s$ varies in $W$) to produce $p$-restricted weights, $\mu-\eta$ must be $h$-deep (see Remark \ref{rmk:sharp}). 
Thus the best one can hope for is that Jantzen's formula always holds when $\mu-\eta$ is $h$-deep.
Our first main result confirms this when the center of $G$ is connected.

\begin{thm}\label{thm:main:intro}
Suppose that $G$ has connected center, $\mu-\eta$ is $h$-deep in the base $p$-alcove, and $\lambda$ is a $p$-restricted dominant weight. 

Then $[\ovl{R}_s(\mu):F(\lambda)] \neq 0$ if and only if there exist $\tld{w}$ and $\tld{w}_\lambda$ in the extended affine Weyl group $\tld{W}$ such that:
\begin{itemize}
\item $\tld{w}\cdot C_0$ is dominant and $\tld{w}_\lambda\cdot C_0$ is $p$-restricted (where $\cdot$ denotes the $p$-dot action);
\item
$\tld{w} \uparrow \tld{w}_h \tld{w}_\lambda$ (where $\uparrow$ is the semi-infinite Bruhat order defined as in \cite[II.6.5]{RAGS}, $\tld{w}_h\defeq w_0t_{-\eta}$ and $w_0$ is the longest element of $W$); and
\item $\lambda = \tld{w}_\lambda \cdot (\mu - \eta + s\pi(\tld{w}^{-1}(0)))$ (where $\pi$ denotes the automorphism of $X^*(T)$ corresponding to Frobenius; see \S \ref{sec:notation}). 
\end{itemize}
Moreover, in this case:
\[
[\ovl{R}_s(\mu):F(\lambda)]_{G_0(\Fp)} = [\widehat{Z}(1,\mu+(s\pi-p)(\tld{w}^{-1}(0))+(p-1)\eta):\widehat{L}(1,\lambda)]_{G_1T}. 
\]
Here, $\widehat{Z}(1,-)$ and $\widehat{L}(1,-)$ are the baby Verma/standard and simple modules, respectively, for the augmented Frobenius kernel $G_1T$ (see \S \ref{sec:ing}).  
\end{thm}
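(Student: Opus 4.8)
The plan is to recast the displayed identity as an equality in the Grothendieck group $K_0(\ovl{\F}_p[G_0(\F_p)])$ and to transport it from the $(2h-1)$-deep range, where it is Jantzen's theorem \cite{jantzen} (resp.\ \cite{herzig-duke,GHS} for reductive groups), to the $h$-deep range. The bridge is the category of $G_1T$-modules: the baby Verma $\widehat{Z}(1,\nu)$ is induced from a one-dimensional module of the Borel part of $G_1T$, so it has a completely explicit character, and in $K_0(G_1T)$ one has $[\widehat{Z}(1,\nu)]=\sum_\lambda [\widehat{Z}(1,\nu):\widehat{L}(1,\lambda)]\,[\widehat{L}(1,\lambda)]$, where by the linkage principle and translation functors the composition multiplicities depend only on the alcoves of $\nu$ and $\lambda$ through the corresponding elements of $\tld{W}$, as soon as $\nu$ is regular. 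The first step is a direct alcove computation: when $\mu-\eta$ is $h$-deep in $C_0$, each weight $\nu=\mu+(s\pi-p)(\tld{w}^{-1}(0))+(p-1)\eta$ arising in the statement is regular, so the $G_1T$-picture --- in particular the description of $\JH(\widehat{Z}(1,\nu))$ via the semi-infinite Bruhat order of \cite[II.6]{RAGS} --- applies unchanged.

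Next I would use the standard comparison between $G_1T$-modules and $\ovl{\F}_p[G_0(\F_p)]$-modules (the $(p-1)\eta$ shift in $\nu$ is exactly the Steinberg twist that makes this comparison clean), packaged as an explicit homomorphism $\Theta\colon K_0(G_1T)\to K_0(\ovl{\F}_p[G_0(\F_p)])$ with $\Theta[\widehat{L}(1,\lambda)]=[F(\lambda)]$ for $p$-restricted $\lambda$. Under $\Theta$ the displayed formula becomes the assertion that $[\ovl{R}_s(\mu)]$ equals $\Theta$ of an explicit finite $\Z$-combination of baby Vermas $\widehat{Z}(1,\nu_{\tld{w}})$, indexed by the $\tld{w}$ with $\tld{w}\cdot C_0$ dominant and $\tld{w}^{-1}(0)$ in the finite region cut out by $s$. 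Since $\Theta$ and the characters of the $\widehat{Z}(1,\nu_{\tld{w}})$ are explicit, the right-hand side is a uniform (polynomial/periodic) function of $(s,\mu)$; the Deligne--Lusztig character of $R_s(\mu)$, hence the Brauer character of $\ovl{R}_s(\mu)$, is likewise uniform in $(s,\mu)$. Thus the $K_0$-identity reduces to checking that both sides are restrictions of the same uniform expression, so that, being known on the $(2h-1)$-deep locus, it propagates to the whole locus where $\mu-\eta$ is regular; note that this index set of $\tld{w}$ does not depend on how deep $\mu-\eta$ is, which is what makes the propagation legitimate.

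It remains to extract the individual multiplicities $[\ovl{R}_s(\mu):F(\lambda)]$ from this $K_0$-identity, which is where the conditions in the theorem and the sharpness of $h$ come in. Running $h$-deepness through the formulas, one checks that for each surviving $\tld{w}$ the weight $\lambda=\tld{w}_\lambda\cdot(\mu-\eta+s\pi(\tld{w}^{-1}(0)))$ is genuinely $p$-restricted dominant --- and that $h$ is exactly the bound at which this persists (Remark \ref{rmk:sharp}) --- while the constraint $\tld{w}\uparrow\tld{w}_h\tld{w}_\lambda$ is precisely what picks out the baby Verma composition factors that contribute. One then has to see that the coefficients $[\widehat{Z}(1,\nu_{\tld{w}}):\widehat{L}(1,\lambda)]$ are non-negative and suffer no cancellation between different $\tld{w}$; this is controlled by the structure of $\JH(\widehat{Z}(1,\nu))$ inside a single alcove, together with the results of this paper on the ``obvious'' Jordan--H\"older factors of $\ovl{R}_s(\mu)$, which pin down $\JH(\ovl{R}_s(\mu))$ to exactly the predicted set of weights. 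Given this, linear independence of the Brauer characters of the $F(\lambda)$ upgrades the $K_0$-identity to the stated equality of multiplicities.

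The hard part will be the a priori control of $\JH(\ovl{R}_s(\mu))$ near the walls of the dominant and $p$-restricted regions. In the $(2h-1)$-deep range all predicted factors are deep in their alcoves and no boundary phenomena occur, whereas in the $h$-deep range one must simultaneously rule out spurious predicted factors and unpredicted genuine ones. I expect the most delicate cases to be those where several walls meet, so that $h$-deepness is only barely sufficient; there a reduction to Levi subgroups or to rank-one computations should be needed to certify that no extra Jordan--H\"older factor appears and that the predicted multiplicities are correct.
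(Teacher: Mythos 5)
There is a genuine gap, and it sits exactly where the paper's new content lies. The part of your argument that works — a uniform character/$K_0$ identity relating $\ovl{R}_s(\mu)$ to an explicit $\Z$-combination of baby Vermas — is already Jantzen's formula and holds with \emph{no} genericity hypothesis (Theorem \ref{thm:jantzen}); there is nothing to ``propagate''. But that identity computes $\dim\Hom_\Gamma(Q_\lambda,\ovl{R}_s(\mu))$, where $Q_\lambda=\widehat{Q}_1(\lambda)|_\Gamma$ is only \emph{a} projective module with $F(\lambda)$ in its cosocle, not the projective cover $P_\lambda$. Equivalently, in your language: your map $\Theta$ must be evaluated on $\widehat{L}(1,\lambda_1+p\lambda_2)$ for non-restricted weights, giving $[F(\lambda_1)\otimes F(\pi\lambda_2)]$, and re-expanding these tensor products into irreducible Brauer characters before you can invoke linear independence. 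By Chastkofsky--Jantzen (Proposition \ref{prop:packet}) this re-expansion is governed by the multiplicities $[L(\lambda')\otimes L(\pi\nu):L(\lambda+p\nu)]_G$, i.e.\ $Q_\lambda\cong\bigoplus P_{\lambda'}$ over a ``packet'' of $\lambda'$. In the $(2h-1)$-deep range the packet is a singleton, which is why Jantzen obtains individual multiplicities; in the $h$-deep range it need not be, and your proposal supplies no mechanism for separating the contributions of different $\lambda'$ in a packet. The uniformity heuristic cannot do it: the Deligne--Lusztig Brauer character is uniform in $(s,\mu)$ for all regular $\mu-\eta$, yet the decomposition genuinely changes below depth $h$ (Remark \ref{rmk:sharp}), so two ``uniform expressions'' agreeing on the deep locus do not agree everywhere.

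The missing idea, which is the heart of the paper's proof of Theorem \ref{thm:main}, is the collapse of the packet sum in Corollary \ref{cor:jantzen}: if $\mu-\eta$ is $h_\eta$-deep, $[\ovl{R}_s(\mu):F(\lambda')]\neq 0$, and $P_{\lambda'}$ occurs in $Q_\lambda$, then $\lambda-\lambda'\in(p-\pi)X^0(T)$, so only one packet member contributes and the weighted sum becomes a single multiplicity. This is established not by Levi or rank-one reductions near walls, but by alcove-geometric estimates: bounding $h_\nu$ by $h_\eta$ (Lemmas \ref{lemma:apriori}, \ref{lemma:packet}), the facet-intersection Lemma \ref{lemma:facet}, the linkage rigidity Lemma \ref{lemma:LAP} (where connectedness of the center enters), and the translation-principle Corollary \ref{cor:translation} forcing $\nu\in X^0(T)$. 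Finally, note that appealing to the ``obvious Jordan--H\"older factor'' results (Theorem \ref{thm:2:intro}/\ref{thm:JHfactor}) to ``pin down $\JH(\ovl{R}_s(\mu))$'' is both circular in spirit and logically insufficient: those results give a lower bound (certain factors occur with multiplicity one, for special $(s,\mu)$ attached to $\lambda$), whereas what you need here is the upper bound, and their proofs use the very same packet-collapsing mechanism.
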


\begin{rmk}
\begin{enumerate}
\item
The hypothesis on connected center is necessary for the second conclusion to hold---without it the right hand side  should be replaced by a sum of Frobenius kernel multiplicities.
The (generic) decomposition problem in the general reductive case can often be reduced to Theorem \ref{thm:main:intro} by an analysis of isogenies.

\item 
The multiplicity $[\widehat{Z}(1,\mu'):\widehat{L}(1,\lambda)]_{G_1T}$ can be nonzero only if $\mu',\lambda$ are in the same $p$-dot orbit of the affine Weyl group, in which case, for $p$ sufficiently large, it depends combinatorially on the $p$-facets containing $\mu'$ and $\lambda$. 
For instance, when $p\gg h$ it is controlled by periodic Kazhdan--Lusztig polynomials.
\item In a different direction, Pillen \cite{PillenDL} analyzes the contribution of the $p$-singular weights when $\mu$ lies in exactly one wall of $C_0$ and is $2h-1$ away from the other walls.
\item 
This result was first suggested by considerations in the theory of local models for potentially crystalline Emerton--Gee stacks.
Specifically, the hypothesis on $\mu$ in Theorem \ref{thm:main:intro} is the range where the special fiber cycles (which are expected to reflect the mod $p$ reduction of Deligne--Lusztig representations by the Breuil--M\'ezard conjecture) have uniform behavior.
\end{enumerate}
\end{rmk}
It is also natural to contemplate the dual question, i.e.~given a Serre weight $F(\lambda)$, for which $(s, \mu)$ is $F(\lambda)$ a Jordan--H\"older factor of $\ovl{R}_s(\mu)$? 
This problem is essentially equivalent to decomposing the characteristic zero lift of a  projective cover $P_{\lambda}$ of $F(\lambda)$ into irreducibles (the bulk of which are of the form $R_s(\mu)$).
When $\lambda$ is $2h$-deep in its alcove, the complete decomposition can be obtained from Theorem \ref{thm:main:intro}.
In particular there are always $|W|$ ``obvious'' $R_s(\mu)$ which contain $F(\lambda)$ with multiplicity one.
However, when $\lambda$ is not $2h$-deep the decomposition of $P_\lambda$ becomes considerably more complicated; for instance some $R_s(\mu)$ factors that appear generically may disappear.

Nevertheless, we show that the ``obvious'' $R_s(\mu)$ factors of $P_\lambda$ persist up to essentially the optimal threshold:
\begin{thm}
\label{thm:2:intro}
Suppose that $G$ has connected center.
Let $\lambda$ be a $p$-restricted dominant weight which is $h$-deep in its $p$-alcove.
Then for all $s\in W$, $F(\lambda)$ is a Jordan--H\"older factor of $\ovl{R}_s(\tld{w}_h\cdot \lambda+\eta)$ with multiplicity one. 
\end{thm}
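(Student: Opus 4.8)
The plan is to show $[\ovl{R}_s(\mu):F(\lambda)]=1$ for every $s$, where we write $\mu\defeq\tld{w}_h\cdot\lambda+\eta$ throughout, by proving the inequalities $\ge 1$ and $\le 1$ separately. First one unwinds $\mu$: since $\tld{w}_h=w_0t_{-\eta}$ is an involution of $\tld{W}$ and $w_0(\eta)=-\eta$, the $p$-dot action gives $\tld{w}_h\cdot\lambda=w_0(\lambda)+(p-2)\eta$, so $\mu=w_0(\lambda)+(p-1)\eta$; consequently $\langle\mu,\alpha^\vee\rangle=(p-1)-\langle\lambda,\beta^\vee\rangle$ for a simple root $\beta$ depending on $\alpha$, so $\mu$ is a $p$-restricted dominant weight, and $\tld{w}_h\cdot(\mu-\eta)=\tld{w}_h^2\cdot\lambda=\lambda$. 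One should also observe that $\mu-\eta=\tld{w}_h\cdot\lambda$ is $h$-deep in its alcove $\tld{w}_h\cdot C_\lambda$ (with $C_\lambda\ni\lambda$), but that this alcove is in general not $C_0$; hence Theorem~\ref{thm:main:intro} is not directly applicable, and instead we invoke the results of the preceding sections on the obvious Jordan--H\"older factors of \emph{arbitrary} Deligne--Lusztig representations, which place no restriction on the alcove of $\mu-\eta$.

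For $[\ovl{R}_s(\mu):F(\lambda)]\ge 1$, I would realize $F(\lambda)$ as the obvious constituent of $\ovl{R}_s(\mu)$ selected by the weight relation $\lambda=\tld{w}_\lambda\cdot\big(\mu-\eta+s\pi(\tld{w}^{-1}(0))\big)$ together with the conditions ``$\tld{w}\cdot C_0$ dominant, $\tld{w}_\lambda\cdot C_0$ $p$-restricted, $\tld{w}\uparrow\tld{w}_h\tld{w}_\lambda$''. For each $s$ this relation is satisfied by an ($s$-dependent) pair $(\tld{w},\tld{w}_\lambda)$ whose side conditions follow from $h$-deepness of $\lambda$; for $s=\mathrm{id}$ one may simply take $\tld{w}=\mathrm{id}$, $\tld{w}_\lambda=\tld{w}_h$ in view of $\tld{w}_h\cdot(\mu-\eta)=\lambda$. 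When moreover $G_0$ is split, the case $s=\mathrm{id}$ can be checked directly: $\ovl{R}_{\mathrm{id}}(\mu)\cong\Ind_{B(\F_p)}^{G_0(\F_p)}\big(\mu|_{T(\F_p)}\big)$, and since $\mu\equiv w_0\lambda$ modulo $(p-1)X^*(T)$ the character $\mu|_{T(\F_p)}$ occurs in the $T(\F_p)$-module of $U(\F_p)$-coinvariants of $F(\lambda)$ (it is the image of the lowest weight line), so adjunction produces a nonzero map $F(\lambda)\to\ovl{R}_{\mathrm{id}}(\mu)$. For general $s$ the analogous socle (or head) statement is exactly one of the obvious-factor results being invoked.

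For $[\ovl{R}_s(\mu):F(\lambda)]\le 1$---the technical core---I would feed the datum $(\tld{w},\tld{w}_\lambda)$ into the obvious-factor multiplicity formula, which identifies $[\ovl{R}_s(\mu):F(\lambda)]$ with a single $G_1T$-multiplicity $[\widehat{Z}(1,\nu'):\widehat{L}(1,\lambda)]$ for the explicit weight $\nu'=\mu+(s\pi-p)(\tld{w}^{-1}(0))+(p-1)\eta$, and then show this equals $1$. Unwinding the Frobenius twist encoded in $\tld{w}_\lambda$, this becomes the multiplicity of a twist of $\widehat{L}(1,\lambda)$ in a baby Verma module whose highest weight lies in the fundamental alcove $C_0$; because $\lambda$ is $h$-deep that alcove is generic, so the relevant periodic Kazhdan--Lusztig multiplicity takes its extremal value $1$ and $\lambda$ receives no extra contribution. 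This last point is the main obstacle. If $\lambda$ were $2h$-deep, the entire decomposition of $\ovl{R}_s(\mu)$---and hence of $P_\lambda$---would follow from Theorem~\ref{thm:main:intro} and multiplicity one would be automatic; for $\lambda$ only $h$-deep one must instead control the interaction of the obvious constituent $F(\lambda)$ with the non-obvious part of $\ovl{R}_s(\mu)$, which is where the fine analysis of obvious factors of non-generic Deligne--Lusztig representations is needed, and which is also why $h$ is the sharp threshold (cf.\ Remark~\ref{rmk:sharp}).
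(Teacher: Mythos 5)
Your reduction of the statement is reasonable as far as it goes: you correctly compute $\mu=\tld{w}_h\cdot\lambda+\eta=w_0(\lambda)+(p-1)\eta$, correctly observe that $\mu-\eta$ is not in $C_0$ so Theorem \ref{thm:main:intro} does not apply, and your principal-series adjunction argument gives the lower bound for $s=\mathrm{id}$ in the split case. But the proposal has a genuine gap at exactly the point you yourself flag as ``the main obstacle.'' There is no ``obvious-factor multiplicity formula'' in the paper that identifies $[\ovl{R}_s(\mu):F(\lambda)]$ with a single $G_1T$-multiplicity $[\widehat{Z}(1,\nu'):\widehat{L}(1,\lambda)]$ outside the range of Theorem \ref{thm:main}. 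What is available unconditionally (Theorem \ref{thm:jantzen}, Corollary \ref{cor:jantzen}) computes only $\dim\Hom_\Gamma(Q_\lambda,\ovl{R}_s(\mu))$, and since $Q_\lambda$ is in general a direct sum $\bigoplus P_{\lambda'}^{\oplus[L(\lambda')\otimes L(\pi\nu):L(\lambda+p\nu)]}$ over a Chastkofsky--Jantzen packet (Proposition \ref{prop:packet}), this Hom space is a weighted sum $\sum_{\lambda',\nu}[\ovl{R}_s(\mu):F(\lambda')]\,[L(\lambda')\otimes L(\pi\nu):L(\lambda+p\nu)]$ rather than the single multiplicity you want. Computing that this Hom space equals $1$ (which the paper does by taking $\tld{w}=\tld{w}_h\tld{w}_\lambda$, so the relevant baby Verma weight is linked to $\lambda$ itself and the multiplicity is the trivial highest-weight one, not a nontrivial periodic KL value) therefore proves neither $\geq 1$ nor $\leq 1$ for $[\ovl{R}_s(\mu):F(\lambda)]$ by itself.

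The actual content of the paper's proof, which your proposal defers to but does not supply, is the collapse of this packet sum: one must show that for every $\lambda'$ in the packet of $\lambda$ with $[L(\lambda')\otimes L(\pi\nu):L(\lambda+p\nu)]_G\neq 0$ and $\Hom_\Gamma(Q_{\lambda'},\ovl{R}_s(\mu))\neq 0$, necessarily $\nu\in X^0(T)$, whence $\lambda'\equiv\lambda$ and both factors in the surviving term equal $1$ (giving multiplicity exactly one in a single stroke, with no separate lower-bound argument needed). This is established by the alcove-geometric Lemmas \ref{lemma:vertex}, \ref{lemma:anydirection} and \ref{lemma:domdirection}, which combine the linkage constraint $\lambda+p\nu\uparrow\lambda'+\pi(\nu')$ from Lemma \ref{lemma:packet} with the constraint $\tld{w}\cdot(\mu-\eta+s\pi\tld{w}^{-1}(0))\uparrow\tld{w}_h\cdot\lambda'$ from Theorem \ref{thm:jantzen}, and use the depth hypothesis on $\lambda$ (together with Lemmas \ref{lemma:convex}--\ref{lemma:facet} and \ref{lemma:LAP}) to force $\nu=0$ up to $X^0(T)$. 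Without this step the argument does not close; asserting that ``the relevant periodic Kazhdan--Lusztig multiplicity takes its extremal value $1$'' addresses the wrong quantity, since the danger is not a large baby Verma multiplicity but contamination from other projective summands of $Q_\lambda$ and, dually, from $Q_{\lambda'}$ for $\lambda'\neq\lambda$ in the packet.
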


\begin{rmk}
In fact we prove the theorem under weaker hypotheses on $\lambda$ depending on its alcove, see Theorem \ref{thm:JHfactor}.
\end{rmk}

Theorem \ref{thm:2:intro} gives a large supply of characteristic zero irreducible representations containing $F(\lambda)$ when $\lambda$ is $h$-deep. For the number theoretic application discussed below, we would like to construct for \emph{any} ($p$-dot regular) $\lambda$ an $R_s(\mu)$ containing $F(\lambda)$ such that $\mu$ is in the base alcove with essentially the same depth as $\lambda$. %
We establish such a statement in Theorem \ref{thm:JHfactor} under a mild ``smallness'' hypothesis which can always be arranged in type $A$.
Note that this is rather subtle because when $\lambda$ is not $h$-deep the most obvious Deligne--Lusztig induction $R_1(\lambda)$ containing $F(\lambda)$ usually fails the depth requirement (because of a small translation when expressing $R_1(\lambda)$ as $R_s(\mu)$ with $\mu-\eta\in C_0$).  
\subsection{A number theoretic application} \label{intro:app}
We now explain how the above results allow us to improve the main theorem of \cite{LLL}.
Recall the global setting of \emph{loc.~cit}.
Let $F/F^+$ be a totally imaginary extension of a totally real field $F^+\neq\Q$ such that $p$ is inert in $F^+$ and splits in $F$. 
Given a reductive group $G_{/F^+}$ which is an outer form for $\GL_n$ and splits over $F$, and such that $G(F^+\otimes_{\Q}\R)$ is compact, and given a compact open subgroup of the form $U=U^pG(\cO_{F^+_p})\leq G(\bA_{F^+}^{\infty})$ and a $G(\cO_{F^+_p})$-module $M$, we define a space 
\[
S(U,M)\defeq \{f: G(F^+)\backslash G(\bA_{F^+}^{\infty})/U\ra M\mid f(gu)=u_p^{-1}f(g)\,\forall g\in G(\bA_{F^+}^{\infty}),\, u\in U\}
\]
of algebraic modular forms. 
It is endowed with a faithful action of a Hecke algebra $\bT$ (with generators indexed by an infinite set of ``good primes'' for $U$, cf.~\cite[\S 4.2.2]{LLL}) for which each maximal ideal $\fm\subseteq \bT$ has an associated continuous semisimple representation $\rbar_{\fm}: G_F\ra\GL_n(\ovl{\F}_p)$ (cf.~\cite[\S 3.4]{CHT}).
We further assume that $\rbar_{\fm}$ is absolutely irreducible.
In \cite{herzig-duke} (later generalized in \cite{GHS}), Herzig made a remarkable conjecture predicting that the set $W(\rbar_{\fm})$ of $p$-\emph{regular} Serre weights $V$ such that $S(U,V)_{\fm}\neq 0$ is given by a combinatorially defined set $W^{?}(\rbar_{\fm}|_{G_{F^+_p}})$ when $\rbar_{\fm}|_{G_{F^+_p}}$ is semisimple. 
We remark that $W^{?}(\rbar_{\fm}|_{G_{F^+_p}})$ is given in terms of the Jordan--H\"older factors of a Deligne--Lusztig representation associated to $\rbar_{\fm}|_{G_{F^+_p}}$.

The weight elimination statement which we obtain is the following:
\begin{thm}
\label{thm:WE:intro}
Assume that $\rbar_{\fm}$ is absolutely irreducible, and that $\rbar_{\fm}|_{G_{F^+_p}}$ is $(2n+1)$-generic.
Then $ W(\rbar_{\fm})\subseteq W^{?}((\rbar_{\fm}|_{G_{F^+_p}})^{\semis})$.
\end{thm}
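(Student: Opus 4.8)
The plan is to deduce Theorem~\ref{thm:WE:intro} from the representation-theoretic results above (Theorems~\ref{thm:main:intro} and~\ref{thm:2:intro}, and the sharper Theorem~\ref{thm:JHfactor}) by the standard ``modularity lifting / patching'' machinery already set up in \cite{LLL}. Recall that the weight elimination predicate $W(\rbar_\fm)$ is built out of the actual Jordan--H\"older content of the automorphic representations, while $W^?((\rbar_\fm|_{G_{F^+_p}})^{\semis})$ is the combinatorial set, which by Herzig's construction is exactly the set of ``obvious'' Serre weights appearing in the reduction of a Deligne--Lusztig representation $\ovl{R}_s(\mu)$ attached to $(\rbar_\fm|_{G_{F^+_p}})^{\semis}$. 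So the containment we want is really the statement: if a Serre weight $F(\lambda)$ is \emph{modular} for $\rbar_\fm$ (i.e.\ $S(U,F(\lambda))_\fm\neq 0$), then $F(\lambda)$ lies in the combinatorial set. The first step is therefore to recall the local-global compatibility at $p$ from \cite{LLL}: modularity of $F(\lambda)$ forces the existence of a potentially crystalline lift of $\rbar_\fm|_{G_{F^+_p}}$ of the appropriate Hodge type and inertial type, where the inertial type is read off from the Deligne--Lusztig datum $(s,\mu)$ via the inertial local Langlands correspondence.

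Second, I would invoke the existing weight-elimination input of \cite{LLL} in its ``$(2n+1)$-generic'' but \emph{not yet optimal} form: under the genericity hypothesis, the set of inertial types $\tau$ such that $\rbar_\fm|_{G_{F^+_p}}$ admits a potentially crystalline lift of type $\tau$ and parallel Hodge--Tate weights $(n-1,\dots,1,0)$ is controlled by the Emerton--Gee stack geometry (equivalently by the Breuil--M\'ezard cycles), and each such $\tau$ contributes to $W(\rbar_\fm)$ only through the Jordan--H\"older factors of the corresponding $\ovl{R}_s(\mu)$. The improvement over \cite{LLL} comes precisely from feeding in Theorem~\ref{thm:main:intro}: because $\mu-\eta$ need only be $h$-deep (here $h+1 = n$ is the Coxeter number of $\GL_n$, so $h$-deep becomes $(n-1)$-deep, comfortably implied by $(2n+1)$-genericity) for Jantzen's universal decomposition to be valid, one knows the \emph{exact} Jordan--H\"older content of $\ovl{R}_s(\mu)$ for all the Deligne--Lusztig representations that can occur; in particular the set of modular weights coming from any fixed inertial type is exactly a union of ``obvious'' weights, with no spurious factors. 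Combining over all admissible inertial types and matching with Herzig's recipe for $W^?$ (which is assembled from the same obvious weights, cf.\ the discussion after Theorem~\ref{thm:2:intro} and \cite[\S 4]{GHS}) yields the claimed inclusion $W(\rbar_\fm)\subseteq W^?((\rbar_\fm|_{G_{F^+_p}})^{\semis})$.

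Concretely, the chain of implications I would write down is: $F(\lambda)\in W(\rbar_\fm)$ $\Rightarrow$ by local-global compatibility there is an inertial type $\tau$ with $F(\lambda)$ a Jordan--H\"older factor of the reduction of the Deligne--Lusztig representation $R(\tau)$ and $\rbar_\fm|_{G_{F^+_p}}$ has a potentially crystalline lift of type $(\eta,\tau)$ $\Rightarrow$ by the genericity hypothesis and Theorem~\ref{thm:main:intro}, $R(\tau) = R_s(\mu)$ for an explicit $(s,\mu)$ with $\mu-\eta$ in the base alcove and $h$-deep, and the Jordan--H\"older factors of $\ovl{R}_s(\mu)$ are given by Jantzen's universal formula $\Rightarrow$ $\lambda = \tld{w}_\lambda\cdot(\mu-\eta+s\pi(\tld{w}^{-1}(0)))$ for some $\tld{w}\uparrow\tld{w}_h\tld{w}_\lambda$ as in the theorem $\Rightarrow$ unwinding Herzig's combinatorial description of $W^?$, this places $\lambda$ in $W^?((\rbar_\fm|_{G_{F^+_p}})^{\semis})$, using that $(\rbar_\fm|_{G_{F^+_p}})^{\semis}$ determines, up to the relevant twist, the same base-alcove parameter $\mu$. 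The genericity bound $2n+1$ is what guarantees simultaneously that every relevant $\mu-\eta$ is at least $h$-deep (so Theorem~\ref{thm:main:intro} applies) \emph{and} that the semisimplification does not lose information about the set of admissible $\tau$.

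The main obstacle, and the place where care is needed, is the bookkeeping between the three parametrizations: (i) inertial types $\tau$ and their associated Deligne--Lusztig representations $R(\tau)$; (ii) the normalized form $R_s(\mu)$ with $\mu-\eta\in C_0$, where passing from $R_1(\lambda)$-type presentations to the base alcove introduces the ``small translation'' warned about in the discussion before \S\ref{intro:app}; and (iii) Herzig's explicit set $W^?$. Each of these identifications is essentially an exercise in the combinatorics of the extended affine Weyl group and the $p$-dot action, but getting the translations and twists to line up on the nose---so that ``obvious factors of $\ovl{R}_s(\mu)$'' is literally ``$W^?$''---is where the argument could go wrong, and where I would need to be most careful to track the roles of $\pi$, $w_0$, and the element $\tld{w}_h = w_0 t_{-\eta}$. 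Once that dictionary is pinned down, the theorem follows formally from the already-cited modularity-lifting and base-change results of \cite{LLL} together with Theorem~\ref{thm:main:intro}; no new global input is required.
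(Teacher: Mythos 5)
Your proposal has a genuine gap at its logical core. The chain of implications in your third paragraph ends with: ``$F(\lambda)$ is a Jordan--H\"older factor of $\ovl{R}_s(\mu)$ for some type $\tau(s,\mu)$ of which $\rbar_\fm|_{G_{F^+_p}}$ admits a potentially crystalline lift, hence $\lambda\in W^?$.'' This does not follow. The set $W^?(\rhobar^{\semis})$ is the Jordan--H\"older content of the \emph{one} Deligne--Lusztig representation attached to $\rhobar^{\semis}$ itself, whereas the existence of a potentially crystalline lift of some type $\tau(w,\nu)$ only yields a single combinatorial admissibility constraint $(t_\nu w)^{-1}t_\mu s\in\Adm(\eta)$ relating $(w,\nu)$ to the parameters $(s,\mu)$ of $\rhobar$ --- it does not identify $\tau(w,\nu)$ with $\tau(s,\mu)$. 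A single such constraint never suffices. The actual mechanism (as stated in \S\ref{intro:app}) is to produce \emph{many} Deligne--Lusztig representations containing $F(\lambda)$ --- one $\ovl{R}_w\bigl(\lambda_0+\eta-w\pi(\tld{w}_h\tld{w}_\lambda)^{-1}(0)\bigr)$ for \emph{each} $w\in W$, via Theorem \ref{thm:JHfactor}\ref{item:alldirection} --- so that the $|W|$ resulting admissibility constraints, combined via \cite[Lemma 4.1.10]{LLL}, force $F(\lambda)\in W^?(\rhobar)$. Your argument leans on Theorem \ref{thm:main:intro} (decomposing a given $\ovl{R}_s(\mu)$), but the proof genuinely requires the dual result, Theorem \ref{thm:JHfactor} (finding all $\ovl{R}_s(\mu)$ containing a given $F(\lambda)$); these are not interchangeable here.

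A second missing step: Theorem \ref{thm:JHfactor}\ref{item:alldirection} only applies once $\lambda_0$ is sufficiently deep in $C_0$, and the hypothesis that $\rhobar$ is $(2n+1)$-generic does not directly give this (depth of $\mu-\eta$ for $\rhobar$ and depth of $\lambda_0$ for a modular weight are different quantities). The paper bootstraps: assuming $\lambda_0$ is not $(h_{\tld{w}_h\tld{w}_\lambda(0)}+1)$-deep, Theorem \ref{thm:JHfactor}\ref{item:domdirection} (the ``dominant direction'' case, which needs no depth hypothesis on $\lambda_0$) produces a single type $\tau(w,\nu)$ with $\nu-\eta$ shallow and containing $F(\lambda)$; the resulting lift and its admissibility constraint then force $\rhobar$ to fail $(2h_\eta+3)$-genericity, a contradiction. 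This depth bootstrap, together with the unramified-base-change argument needed to convert ``potentially semistable lift of type $(\eta,\tau(w,\nu))$'' into the condition $(t_\nu w)^{-1}t_\mu s\in\Adm(\eta)$ (via \cite[Theorem 3.2.1]{LLL} and Lemma \ref{lemma:presentation}), constitutes most of the actual proof and is absent from, or only gestured at in, your proposal.
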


This result was proven in \cite{LLL} with the assumption that $\rbar_{\fm}|_{G_{F^+_p}}$ is $(6n -2)$-generic instead of $(2n+1)$-generic.  
As in \emph{loc.~cit}.~the main mechanism to show $F(\lambda)\notin W(\rbar_{\fm})$ is to find sufficiently many $\ovl{R}_s(\mu)$ containing it and use $p$-adic Hodge theory constraints implied by the condition $S(U,R_s(\mu))_{\fm}\neq 0$.
In turn, these constraints translate to combinatorial admissibility conditions which exactly match Jantzen's generic pattern for $W^{?}((\rbar_{\fm}|_{G_{F^+_p}})^{\semis})$.
Our representation theoretic results show that we can find all the necessary Deligne--Lusztig representations under weaker genericity hypotheses.

\textbf{Strategy:} Jantzen gives a very general character formula which describes the multiplicity of $R_s(\mu)$ in a certain projective $G_0(\F_p)$-module $Q_{\lambda}$ containing $F(\lambda)$ in terms of Frobenius kernels multiplicities.
As long as $\mu$ is $h$-generic in the lowest $p$-alcove, those multiplicities are controlled by the principal block and hence are independent of $\mu$. 
Under Jantzen's stronger assumption that $\mu$ is $(2h-1)$-generic, any $F(\lambda)$ that can contribute to $R_s(\mu)$ has the property that $Q_\lambda=P_\lambda$ is indecomposable, thus one gets the formula for the multiplicity $[R_s(\mu):F(\lambda)]$ of $F(\lambda)$ in $R_s(\mu)$.
In contrast, the key difficulty when $\mu$ is not $2h$-generic is that $Q_\lambda$ can be decomposable and Jantzen's character formula only gives a formula for certain weighted sums $\sum[R_s(\mu):F(\lambda')][Q_\lambda:P_{\lambda'}]$ over ``packets'' of Serre weights.
Our key observation is that if $\mu$ is $h$-generic and $R_s(\mu)$ occurs in $Q_\lambda$ then $R_s(\mu)$ does not occur in $Q_{\lambda'}$ for any other $\lambda'$ in the packet, and hence the above sum collapses.

The complication arising from packets also occurs in Theorem \ref{thm:2:intro}, and we resolve it in the same way.
It is clear from Jantzen's general formula that $\Hom_{G_0(\Fp)}(Q_{\lambda}, \ovl{R}_s(\tld{w}_h\cdot\lambda+\eta)) \neq 0$.  By a series of delicate estimates in alcove geometry, we show that if $\lambda$ is $h$-deep in its $p$-alcove then $\Hom_{G_0(\Fp)}(Q_{\lambda'}, \ovl{R}_s(\tld{w}_h\cdot\lambda+\eta)) = 0$ for any other $\lambda'$ in the packet.

\subsection{Acknowledgments}
The authors thank the Max Planck Institute and the Hausdorff Center of Mathematics for excellent working conditions.
D.L.~was supported by the National Science Foundation under agreement DMS-2302623 and a start-up grant from Purdue University.
B.LH.~acknowledges
support from the National Science Foundation under grants Nos.~DMS-1952678 and DMS-2302619 and the Alfred P.~Sloan Foundation.
B.L.~was supported by National Science Foundation grants Nos.~DMS-2306369 and DMS-2237237 and the Alfred P.~Sloan Foundation.
S.M.~was supported by the Institut Universitaire de France.

\subsection{Notation}\label{sec:notation}

Let $p$ be a prime. 
Let $G_0$ be a reductive group over $\F_p$. 
Let $\F/\F_p$ denote a finite extension such that $G \defeq G_0 \otimes_{\F_p} \F$ is split. 
We assume throughout that $G$ has simply connected derived subgroup. 
Let $T \subset B \subset G$ denote a maximal torus and a Borel subgroup. 
Let $G_1 \subset G$ denote the kernel of the relative $\text{($p$-)}$Frobenius isogeny $F$ on $G$. 
Let $G_1T \subset G$ denote the subgroup scheme generated by $G_1$ and $T$. 
Let $\Gamma \defeq G_0(\F_p)$. 

Recall the following standard notations:
\begin{itemize}
\item the character group $X^*(T)$ of $T\times_{\F}{\ovl{\F}_p}$;
\begin{itemize}
\item $R\subset X^*(T)$ the set of roots of $G$ with respect to $T$; 
\item $X^0(T) \subset X^*(T)$ the set of elements $\nu$ with $\langle \nu,\alpha^\vee\rangle = 0$ for all $\alpha \in R$; 
\item the root lattice $\Z R \subset X^*(T)$ generated by $R$; 
\item $R^+ \subset R$ the subset of positive roots with respect to $B$, i.e.~the roots occurring in $\mathrm{Lie}(B)$; note that this is the convention in \cite{jantzen} but opposite to \cite{RAGS};
\item $\Delta \subset R^+$ the subset of simple roots; 
\item $X(T)^+\subset X^*(T)$ the dominant weights with respect to $R^+$; 
\item the $p$-restricted set $X_1(T)\subset X(T)^+$ of dominant weights $\lambda$ such that $\langle \lambda,\alpha^\vee\rangle \leq p-1$ for all $\alpha \in \Delta$; 
\item the partial order $\leq$ on $X^*(T)$ and $X^*(T)\otimes_{\Z} \R$ defined by $\lambda\geq \mu$ if $\lambda-\mu \in \R_{\geq 0} R^+$; 
\item for $\nu \in X^*(T)$ or $X^*(T)\otimes_{\Z} \R$, let $h_\nu \defeq \max_{\alpha \in R} \langle \nu,\alpha^\vee\rangle$;
\item the automorphism $\pi$ of $X^*(T)$ such that $F=p\pi^{-1}$ on $X^*(T)$;
\item a choice of $\pi$-invariant $\eta \in X^*(T)$ such that $\langle \eta,\alpha^\vee\rangle = 1$ for all $\alpha\in\Delta$;
\end{itemize}
\item the Weyl group $W$ of $(G,T)$ and $w_0\in W$ its longest element;
\begin{itemize}
\item the extended affine Weyl group $\tld{W} \defeq X^*(T) \rtimes W$, which acts on $X^*(T)$ on the left by affine transformations; for $\nu\in X^*(T)$ we write $t_\nu\in \tld{W}$ for the corresponding element;
\item the affine Weyl group $W_a \defeq \Z R \rtimes W \subset \tld{W}$; 
\item for $\kappa \in X^*(T)$, we write $\kappa_+ \in W\kappa$ for the unique dominant element in its $W$-orbit, and $\Conv(\kappa)\subset X^*(T)\otimes_\Z\R$ for the convex hull of $W\kappa$; note that this operation is subadditive;
\end{itemize}
\item the set of alcoves of $X^*(T)\otimes_{\Z}\R$, i.e.~the set of connected components of 
\[
X^*(T)\otimes_{\Z}\R\setminus\bigcup_{n\in\Z,\alpha\in R}\{\lambda\in X^*(T)\otimes_{\Z}\R\mid \langle \lambda,\alpha^\vee\rangle=n\},
\]
which has a (transitive) left action of $\tld{W}$; 
\begin{itemize}
\item the dominant alcoves, i.e.~alcoves $A$ such that $0<\langle \lambda,\alpha^\vee\rangle$ for all $\alpha\in \Delta, \lambda\in A$;
\item the lowest (dominant) alcove $A_0 = \{\lambda \in X^*(T)\otimes_{\Z}\R\mid 0<\langle \lambda,\alpha^\vee\rangle<1 \textrm{ for all } \alpha\in R^+\}$; 
\item $\Omega\subset \tld{W}$ the stabilizer of the base alcove;
\item the restricted alcoves, i.e.~alcoves $A$ such that $0<\langle \lambda,\alpha^\vee\rangle<1$ for all $\alpha\in \Delta, \lambda\in A$;
\item the set $\tld{W}^+\subset \tld{W}$ of elements $\tld{w}$ such that $\tld{w}(A_0)$ is dominant;
\item the set $\tld{W}_1\subset \tld{W}^+$ of elements $\tld{w}$ such that $\tld{w}(A_0)$ is restricted; 
\item $\tld{w}_h = w_0t_{-\eta} \in \tld{W}_1$; note that $\tld{W}_1 = \tld{W}^+ \cap \tld{w}_h\tld{W}^+$; 
\end{itemize}
\item the set of $p$-alcoves of $X^*(T)\otimes_{\Z}\R$, i.e.~the set of connected components of 
\[
X^*(T)\otimes_{\Z}\R\setminus\bigcup_{n\in\Z,\alpha\in R}\{\lambda\in X^*(T)\otimes_{\Z}\R\mid \langle \lambda+\eta,\alpha^\vee\rangle=np\};
\]
\begin{itemize}
\item a left $p$-dot action of $\tld{W}$ on $X^*(T)$ defined by $(t_\nu w)\cdot \lambda\defeq p\nu+w(\lambda+\eta)-\eta$; this induces a $p$-dot action of $\tld{W}$ on the set of $p$-alcoves whose restriction to $W_a$ is simply transitive;
\item the dominant $p$-alcoves, i.e.~alcoves $C$ such that $0<\langle \lambda+\eta,\alpha^\vee\rangle$ for all $\alpha\in \Delta, \lambda\in C$;
\item the lowest (dominant) $p$-alcove $C_0\subset X^*(T)\otimes_{\Z}\R$ characterized by $\lambda\in C_0$ if $0<\langle \lambda+\eta,\alpha^\vee\rangle<p$ for all $\alpha\in R^+$;
\item the $p$-restricted alcoves, i.e.~alcoves $C$ such that $0<\langle \lambda+\eta,\alpha^\vee\rangle<p$ for all $\alpha\in \Delta, \lambda\in C$;
\end{itemize}

\item the Bruhat order $\leq$ on $W_a$ with respect to $A_0$ (i.e.~using the reflections across walls of $A_0$ as a set of Coxeter generators);
\begin{itemize}
\item the $\uparrow$ order on the set of $p$-alcoves defined in \cite[II.6.5]{RAGS};
\item the $\uparrow$ order on $W_a$ induced from the ordering $\uparrow$ on the set of $p$-alcoves (via the bijection $\tld{w}\mapsto\tld{w}\cdot C_0$);
\item the Bruhat order on $\tld{W}=W_a\rtimes\Omega$ defined by $\tld{w}\delta\leq \tld{w}'\delta'$ if and only if $\tld{w}\leq \tld{w}'$ and $\delta = \delta'$ where $\delta,\delta'\in\Omega$ and $\tld{w},\tld{w}'\in W_a$;
\item the $\uparrow$ order on $\tld{W}$ defined by $\tld{w}\delta\uparrow \tld{w}'\delta'$ if and only if $\tld{w}\uparrow \tld{w}'$ and $\delta = \delta'$ where $\delta,\delta'\in\Omega$ and $\tld{w},\tld{w}'\in W_a$;
\end{itemize}

\end{itemize}
We will assume throughout that $h_\eta < p$ so that $C_0$ is nonempty. 

\section{Lemmata}
\label{sec:lemmata}

In this section, we collect several lemmata, mostly of a root-theoretic nature, that will be used in later sections. 

\begin{lemma}\label{lemma:dominant}
Suppose that $\tld{s},\tld{w}\in \tld{W}$ such that 
\begin{itemize}
\item $\tld{s} \in \tld{W}^+$;
\item $\tld{s}\uparrow \tld{w}$; 
\item $\tld{s}(0) = \tld{w}(0)$; and
\item the closure of some Weyl chamber contains both $\tld{s}^{-1}(0)$ and $\tld{w}^{-1}(0)$. 
\end{itemize}
Then $\tld{s}=\tld{w}$. 
\end{lemma}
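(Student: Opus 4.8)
\section*{Proof plan for Lemma \ref{lemma:dominant}}

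The plan is to remove the four hypotheses one at a time, reducing the statement to an elementary fact about the finite Weyl group acting on the dominant chamber.

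First I would write $\tld{s}=t_\mu w$ with $\mu\in X^*(T)$, $w\in W$; the hypothesis $\tld{s}(0)=\tld{w}(0)$ then says precisely that $\tld{w}$ has the same translation part $\mu$, so $\tld{w}=\tld{s}u$ for a unique $u\in W$ and the goal becomes $u=1$. Since $\tld{s}^{-1}(0)=-w^{-1}\mu$ and $\tld{w}^{-1}(0)=u^{-1}(-w^{-1}\mu)$ lie in a common $W$-orbit, and the closure of any Weyl chamber is a strict fundamental domain for $W$, the fourth hypothesis forces $\tld{w}^{-1}(0)=\tld{s}^{-1}(0)$; equivalently $u$ fixes $w^{-1}\mu$, so $v\defeq wuw^{-1}$ fixes $\mu$. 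Finally $\tld{s}\in\tld{W}^+$ means that $\mu+w(A_0)$ is dominant, and since $A_0$ is the alcove adjacent to the origin this forces $\mu$ to be dominant and, for every simple root $\alpha$ orthogonal to $\mu$, forces $w^{-1}\alpha\in R^+$. Writing $\Delta_I\subseteq\Delta$ for those simple roots and $W_I\subseteq W$ for the subgroup they generate, we get $\Stab_W(\mu)=W_I$, hence $v\in W_I$, together with $w^{-1}\Delta_I\subseteq R^+$.

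Next I would extract an inequality from $\tld{s}\uparrow\tld{w}$. Since $\tld{s}$ and $\tld{s}u$ have the same $\Omega$-component, this relation is the corresponding one on $p$-alcoves, realized by a chain of ``upward'' reflections across hyperplanes $\langle\,\cdot\,+\eta,\beta^\vee\rangle=mp$ with $\beta\in R^+$. Each such reflection moves every point of the closed alcove it is applied to upward in the order $\le$, so applying the chain to $\tld{s}\cdot\xi$ (for $\xi$ in the closure of $C_0$) transforms it into $\tld{w}\cdot\xi$ while only increasing it; hence $\tld{s}\cdot\xi\le\tld{w}\cdot\xi$. As $\tld{w}\cdot\xi-\tld{s}\cdot\xi=w(u-1)(\xi+\eta)$ and $\xi+\eta$ ranges over a neighbourhood of $0$ in the dominant cone as $\xi$ ranges over $\overline{C_0}$, homogeneity upgrades this to $w(u-1)\zeta\in\R_{\ge 0}R^+$ for all dominant $\zeta$. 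Putting $\zeta=w^{-1}\zeta'$ and using $v=wuw^{-1}$ this reads $(v-1)\zeta'\in\R_{\ge 0}R^+$ for all $\zeta'$ in the chamber $w\,\overline{\cC}$, where $\overline{\cC}$ denotes the closed dominant chamber; dualizing (with respect to a $W$-invariant form, $\R_{\ge 0}R^+$ and $\overline{\cC}$ are mutually dual cones) this is equivalent to $(v^{-1}-1)\,\overline{\cC}\subseteq w\cdot\R_{\ge 0}R^+$.

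To conclude I combine the three ingredients. Because $v\in W_I$ and every $\psi\in\overline{\cC}$ is $\Delta_I$-dominant, $(v^{-1}-1)\psi=v^{-1}\psi-\psi\in-\R_{\ge 0}\Delta_I$, so $(v^{-1}-1)\,\overline{\cC}\subseteq(-\R_{\ge 0}\Delta_I)\cap w\cdot\R_{\ge 0}R^+$. Applying $w^{-1}$ and using $w^{-1}\Delta_I\subseteq R^+$ sends this set into $(-\R_{\ge 0}R^+)\cap\R_{\ge 0}R^+=\{0\}$, the cone $\R_{\ge 0}R^+$ being pointed; hence $v^{-1}-1$ vanishes on the full-dimensional set $\overline{\cC}$, so $v=1$, $u=1$ and $\tld{s}=\tld{w}$. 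I expect the one genuinely delicate point to be the third paragraph: getting the inequality out of $\uparrow$ with the correct sign in the paper's (Jantzen's, not RAGS's) root convention, and packaging it as a cone containment that meshes with the consequences of $\tld{s}\in\tld{W}^+$; the rest is routine root-system bookkeeping.
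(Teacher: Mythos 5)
Your proposal is correct, and its first half coincides with the paper's: both use hypotheses three and four to conclude $\tld{s}^{-1}(0)=\tld{w}^{-1}(0)$ (the closure of a Weyl chamber meets each $W$-orbit once), which places $\tld{w}$ in $W\tld{s}$. Where you diverge is the endgame. The paper finishes in one line by citing the standard order-theoretic facts that the unique element of $W\tld{s}$ lying in $\tld{W}^+$ is the $\uparrow$-maximum of that coset (i.e.\ $\sigma\tld{x}\uparrow\tld{x}$ for $\tld{x}\in\tld{W}^+$, $\sigma\in W$, cf.\ \cite[II.6.5]{RAGS}) together with antisymmetry of $\uparrow$; combined with $\tld{s}\uparrow\tld{w}$ this forces equality. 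You instead re-derive this by hand: you extract from $\tld{s}\in\tld{W}^+$ the dominance of $\mu$ and the positivity of $w^{-1}\Delta_I$, extract from $\tld{s}\uparrow\tld{w}$ the cone containment $w(u-1)\ovl{\cC}\subseteq\R_{\geq0}R^+$ via the pointwise monotonicity of upward reflection chains, and then combine these by a cone-duality argument to force $v=wuw^{-1}=1$. I checked the delicate points: the homogeneity upgrade from $\ovl{C}_0+\eta=p\ovl{A}_0$ to all of $\ovl{\cC}$ is valid (using the standing assumption that $C_0\neq\emptyset$), the duality between $\ovl{\cC}$ and $\R_{\geq0}R^+$ under a $W$-invariant form works even when $R$ does not span, and the final pointedness argument is sound. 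So your proof is complete; its cost is length and the risk inherent in redoing the sign bookkeeping, while its benefit is that it is self-contained and in effect reproves the antisymmetry of $\uparrow$ and the $\uparrow$-maximality of dominant representatives in the special case needed, rather than quoting them.
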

\begin{proof}
Since $\tld{s}(0) = \tld{w}(0)$, $W\tld{s}^{-1}(0) = W\tld{w}^{-1}(0)$. 
Since the closure of some Weyl chamber contains $\tld{s}^{-1}(0)$ and $\tld{w}^{-1}(0)$, we have $\tld{s}^{-1}(0) = \tld{w}^{-1}(0)$. 
This implies that $W\tld{s} = Wt_{-\tld{s}^{-1}(0)} = Wt_{-\tld{w}^{-1}(0)} = W\tld{w}$. 
Then $\tld{s}\in \tld{W}^+$ and $\tld{s} \uparrow \tld{w}$ imply that $\tld{s} = \tld{w}$. 
\end{proof}

\begin{lemma}\label{lemma:convex}
Let $C \subset X^*(T) \otimes_\Z \R$ be an open chamber for the action of $W$ (in particular its closure $\ovl{C}$ is a fundamental domain for the action of $W$). 
Let $x\in \ovl{C}$ and $\eps \in X^*(T) \otimes_\Z \R$. 
If $x+\eps'$ is the unique element in $\ovl{C} \cap W(x+\eps)$, then $\eps' \in \Conv(\eps)$. 
\end{lemma}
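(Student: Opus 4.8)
The plan is to reduce to the dominant chamber and then run an induction on a length statistic. First I would exploit that $\Conv(\eps)=\Conv(W\eps)$ is a $W$-stable set: choosing the unique $u\in W$ with $u\ovl{C}$ equal to the dominant closed chamber and replacing the triple $(x,\eps,\eps')$ by $(ux,u\eps,u\eps')$ is harmless, because $W(u\eps)=W\eps$ (so $\Conv$ of the new and old $\eps$ agree), the new $\eps'$ clearly has the analogous defining property relative to the dominant chamber and the new $x$, and membership in $\Conv(W\eps)$ is $W$-invariant. Thus I may assume $C$ is the dominant chamber and $x$ is dominant, in which case the hypothesis says precisely that $x+\eps'=(x+\eps)_+$, the dominant representative of the $W$-orbit of $x+\eps$.

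Next I would set up the induction. Pick $w\in W$ of minimal length with $w(x+\eps)$ dominant and induct on $\ell(w)$. If $\ell(w)=0$ then $x+\eps$ is already dominant and $\eps'=\eps\in\Conv(W\eps)$. If $\ell(w)>0$, standard Coxeter theory provides a simple root $\al$ with $w\al\in R^-$, i.e. $\ell(ws_\al)<\ell(w)$; pairing the dominant element $w(x+\eps)$ with the resulting negative coroot forces $\langle x+\eps,\al^\vee\rangle\le 0$, and minimality of $\ell(w)$ excludes equality (otherwise $s_\al$ fixes $x+\eps$ and $ws_\al$ would be shorter), so $\langle x+\eps,\al^\vee\rangle<0$. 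Since $x$ is dominant and $\al$ is simple, $\langle x,\al^\vee\rangle\ge 0$, whence also $\langle\eps,\al^\vee\rangle\le\langle x+\eps,\al^\vee\rangle<0$.

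Now set $c\defeq-\langle x+\eps,\al^\vee\rangle>0$ and $\eps_1\defeq\eps+c\al$, so that $x+\eps_1=s_\al(x+\eps)$ lies in the same $W$-orbit; hence $(x+\eps_1)_+=(x+\eps)_+=x+\eps'$, while a short argument shows that the minimal length of an element of $W$ carrying $x+\eps_1$ into the dominant chamber is $\ell(w)-1$. By the inductive hypothesis applied to $\eps_1$ (with the same $x$) we get $\eps'\in\Conv(W\eps_1)$. On the other hand $0<c\le-\langle\eps,\al^\vee\rangle$, so $\eps_1$ lies on the segment joining $\eps$ and $s_\al\eps=\eps-\langle\eps,\al^\vee\rangle\al$; both endpoints lie in the convex set $\Conv(W\eps)$, so $\eps_1\in\Conv(W\eps)$, and then $W$-stability plus convexity of $\Conv(W\eps)$ gives $\Conv(W\eps_1)\subseteq\Conv(W\eps)$. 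Therefore $\eps'\in\Conv(W\eps)$, completing the induction.

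The main obstacle is the purely combinatorial input in the inductive step: selecting $\al$ so that reflecting strictly decreases the length statistic, checking that $\langle x+\eps,\al^\vee\rangle$ is strictly negative, and confirming the statistic drops by exactly one. This is routine theory of the Bruhat/length function and the geometry of Weyl chambers; once it is in place the rest is elementary convexity bookkeeping.
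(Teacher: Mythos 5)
Your proof is correct and follows essentially the same route as the paper's: reduce to the dominant chamber, then induct on the length of the Weyl group element carrying $x+\eps$ to its dominant representative, using a simple reflection $s_\alpha$ with $\langle x+\eps,\alpha^\vee\rangle\le 0\le\langle x,\alpha^\vee\rangle$ to replace $\eps$ by a point on the segment from $\eps$ to $s_\alpha\eps$ (your $\eps_1=\eps+c\alpha$ is exactly the paper's $r\eps+(1-r)s_\alpha(\eps)$). The only cosmetic differences are that the paper inducts over an arbitrary $w$ with $w(x+\eps)=x+\eps'$ rather than a minimal-length one, and so only needs $\langle x+\eps,\alpha^\vee\rangle\le 0$ rather than your strict inequality.
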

\begin{proof}
Applying an element of $w\in W$ to $C$, $x$, and $\eps$, we can and do assume without loss of generality that $\ovl{C}$ is the dominant chamber containing $X(T)^+$. 
Suppose that $x+\eps' = w(x+\eps)$ for $w\in W$. 
We will induct on the length of $w$ (allowing $\eps$ to vary). 
If $\ell(w) = 0$, then $\eps' = \eps$, and we are done. 
Suppose that $w = w's_\alpha$ for $\alpha \in \Delta$ and $w'\in W$ with $\ell(w') = \ell(w) - 1$. 
Then $\langle x+\eps,\alpha^\vee\rangle \leq 0 \leq \langle x,\alpha^\vee\rangle$ so that $\langle x+r\eps,\alpha^\vee\rangle = 0$ for some $r\in [0,1]$. 
Then $x+\eps' = w(x+\eps) = w'(x+r\eps+(1-r)s_\alpha(\eps))$. 
Note that $r\eps+(1-r)s_\alpha(\eps)\in\Conv(\eps)$. 
By the inductive hypothesis, $\eps' \in \Conv(r\eps+(1-r)s_\alpha(\eps)) \subset \Conv(\eps)$. 
\end{proof}

\begin{lemma}\label{lemma:barycenter}
Let $x \in X^*(T) \otimes_{\Z} \R$ be in the closure of $A_0$. 
Then either $h_x \leq \frac{h_\eta}{h_\eta+1}$ or 
\begin{equation}\label{eqn:wall}
\langle x,\alpha^\vee\rangle \geq \frac{1}{h_\eta+1}
\end{equation}
for some simple root $\alpha \in \Delta$. 
\end{lemma}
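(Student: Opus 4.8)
\emph{Proof proposal.} The plan is to argue by contraposition: assuming that \eqref{eqn:wall} fails for every simple root, so that $\langle x,\alpha^\vee\rangle<\tfrac{1}{h_\eta+1}$ for all $\alpha\in\Delta$, I will deduce $h_x\le\tfrac{h_\eta}{h_\eta+1}$ (the inequality will in fact come out strict).

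First I would unwind the definition of $A_0$: the condition $x\in\ovl{A_0}$ means $0\le\langle x,\gamma^\vee\rangle\le 1$ for every $\gamma\in R^+$, and hence $\langle x,\gamma^\vee\rangle\le 0$ for every $\gamma\in R\setminus R^+=-R^+$. In particular $h_x\ge 0$; if $h_x=0$ there is nothing to prove, so assume $h_x>0$, in which case the maximum defining $h_x$ is attained at some positive root $\beta_0\in R^+$. Now $\beta_0^\vee$ is a positive coroot, hence a nonnegative integral combination $\beta_0^\vee=\sum_{\alpha\in\Delta}d_\alpha\,\alpha^\vee$ of the simple coroots, with the $d_\alpha\ge 0$ not all zero. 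Pairing this identity with $\eta$ and using $\langle\eta,\alpha^\vee\rangle=1$ for $\alpha\in\Delta$ gives $\sum_{\alpha\in\Delta}d_\alpha=\langle\eta,\beta_0^\vee\rangle\le h_\eta$ by the definition of $h_\eta$. Pairing instead with $x$ and invoking the running assumption yields
\[
h_x=\langle x,\beta_0^\vee\rangle=\sum_{\alpha\in\Delta}d_\alpha\langle x,\alpha^\vee\rangle<\frac{1}{h_\eta+1}\sum_{\alpha\in\Delta}d_\alpha\le\frac{h_\eta}{h_\eta+1},
\]
which is exactly the desired bound.

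I do not anticipate a genuine obstacle here; the argument reduces to a one-line estimate once the two inputs are assembled. The only points meriting care are the standard fact that the coroot of a positive root expands with nonnegative integer coefficients in the simple coroots (equivalently, $\beta_0^\vee$ is a positive root of the dual root datum), and the observation that the resulting ``coroot height'' $\sum_{\alpha}d_\alpha$ equals $\langle\eta,\beta_0^\vee\rangle$, which is precisely what allows it to be bounded by $h_\eta$.
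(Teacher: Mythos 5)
Your proof is correct and is essentially the paper's argument: both bound $h_x$ by expanding the (co)root achieving the maximum in simple coroots and bounding the coefficient sum by $\langle\eta,\beta_0^\vee\rangle\le h_\eta$. If anything, your version is slightly more careful than the paper's, since you work with the coroot expansion $\beta_0^\vee=\sum_\alpha d_\alpha\alpha^\vee$ rather than transporting the coefficients of $\beta_0$ in the simple roots, and you note explicitly why the maximum is attained at a positive root.
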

\begin{proof}
Suppose that \eqref{eqn:wall} does not hold for all simple roots $\alpha$. 
Let $\beta = \sum_{\alpha \in \Delta} n_\alpha \alpha$ be a positive root. 
Then $\langle x, \beta^\vee\rangle = \sum_{\alpha \in \Delta} n_\alpha \langle x, \alpha^\vee\rangle < \frac{1}{h_\eta+1} \sum_{\alpha \in \Delta} n_\alpha$. 
On the other hand, $h_\eta \geq \langle \eta, \beta^\vee\rangle = \sum_{\alpha \in \Delta} n_\alpha \langle \eta, \alpha^\vee\rangle = \sum_{\alpha \in \Delta} n_\alpha$. 
Thus $h_x \leq \frac{h_\eta}{h_\eta+1}$. 
\end{proof}

\begin{lemma}\label{lemma:facet}
Suppose that $x,\eps \in X^*(T)\otimes_{\Z}\R$ with $h_\eps \leq\frac{1}{h_\eta+1}$. 
If the closure of an alcove $A_1$ contains $x$, then there exists an alcove $A_2$ whose closure contains $x+\eps$ such that $\ovl{A}_1$ and $\ovl{A}_2$ intersect. 
\end{lemma}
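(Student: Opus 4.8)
The plan is to reduce to the base alcove, produce a candidate $A_2$ by a ``probing'' construction, and then exhibit a point of $\ovl{A_0}\cap\ovl{A_2}$ by solving a small linear system, with Lemma~\ref{lemma:barycenter} guaranteeing that the system is consistent.

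First I would reduce to $A_1=A_0$. Since $W_a$ acts simply transitively on alcoves, choose $\tld{w}=t_\mu w\in W_a$ with $\tld{w}(A_0)=A_1$ and replace $(x,\eps)$ by $(\tld{w}^{-1}(x),w^{-1}(\eps))$; this is harmless because $h_{w^{-1}(\eps)}=h_\eps$, $\tld{w}^{-1}(x+\eps)=\tld{w}^{-1}(x)+w^{-1}(\eps)$, and applying $\tld{w}$ carries a good $A_2$ back to the original situation. I would also reduce to $R$ irreducible and spanning $X^*(T)\otimes_\Z\R$ (alcoves decompose as products, and $h_\eps,h_\eta$ are the maxima over the factors). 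After this $x\in\ovl{A_0}$, i.e.\ $\langle x,\beta^\vee\rangle\in[0,1]$ for all $\beta\in R^+$; let $\theta^\vee$ be the highest coroot, so that $\theta^\vee-\beta^\vee\in\Z_{\geq 0}\Delta^\vee$ for every $\beta\in R^+$.

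Next I would build $A_2$. Let $p_0$ have $\langle p_0,\alpha^\vee\rangle=\tfrac1{h_\eta+1}$ for all $\alpha\in\Delta$; then $\langle p_0,\beta^\vee\rangle=\tfrac{\langle\eta,\beta^\vee\rangle}{h_\eta+1}\in(0,1)$ for all $\beta\in R^+$, so $p_0\in A_0$ and no $\langle p_0,\beta^\vee\rangle$ is an integer. Hence $p_0-(x+\eps)$ is not parallel to any reflection hyperplane through $x+\eps$, so for all small $t>0$ the points $q_t\defeq(x+\eps)+t\bigl(p_0-(x+\eps)\bigr)$ lie in a single open alcove $A_2$, and $x+\eps\in\ovl{A_2}$. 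It then remains to show $\ovl{A_0}\cap\ovl{A_2}\neq\emptyset$. For $\beta\in R^+$ let $n_\beta\in\Z$ with $A_2\subseteq\{n_\beta<\langle\cdot,\beta^\vee\rangle<n_\beta+1\}$; since $\langle x+\eps,\beta^\vee\rangle\in[-h_\eps,1+h_\eps]\subseteq(-1,2)$ and $q_t\to x+\eps$, each $n_\beta\in\{-1,0,1\}$. Writing $P=\{\beta:n_\beta=1\}$ and $M=\{\beta:n_\beta=-1\}$, a point $z$ lies in $\ovl{A_0}\cap\ovl{A_2}$ if and only if $z\in\ovl{A_0}$, $\langle z,\beta^\vee\rangle=1$ on $P$, and $\langle z,\beta^\vee\rangle=0$ on $M$.

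The heart of the argument is producing such a $z$, and this is the step I expect to be the main obstacle. Using $h_\eps\leq\tfrac1{h_\eta+1}$: for $\beta\in P$ one gets $\langle x+\eps,\beta^\vee\rangle>1$ (since $\langle p_0,\beta^\vee\rangle<1$ while $q_t$ has $\beta^\vee$-coordinate $>1$), hence $\langle x,\beta^\vee\rangle>\tfrac{h_\eta}{h_\eta+1}$, so $\langle x,\alpha^\vee\rangle\leq\langle x,\theta^\vee-\beta^\vee\rangle=\langle x,\theta^\vee\rangle-\langle x,\beta^\vee\rangle<\tfrac1{h_\eta+1}$ for every $\alpha\in\supp(\theta^\vee-\beta^\vee)$; symmetrically, for $\beta\in M$ one gets $\langle x,\beta^\vee\rangle<\tfrac1{h_\eta+1}$, so $\langle x,\alpha^\vee\rangle<\tfrac1{h_\eta+1}$ for every $\alpha\in\supp(\beta^\vee)$. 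Let $J\subseteq\Delta$ be the union of all these supports. If $P=\emptyset$ take $z=0$. If $P\neq\emptyset$, then $J\neq\Delta$: were $J=\Delta$, every simple $\alpha$ would satisfy $\langle x,\alpha^\vee\rangle<\tfrac1{h_\eta+1}$, forcing $h_x\leq\tfrac{h_\eta}{h_\eta+1}$ by Lemma~\ref{lemma:barycenter} and contradicting $\langle x,\beta^\vee\rangle>\tfrac{h_\eta}{h_\eta+1}$ for $\beta\in P$. Choosing $\alpha_*\in\Delta\setminus J$ and defining $z$ by $\langle z,\alpha_*^\vee\rangle=d_{\alpha_*}^{-1}$ (where $\theta^\vee=\sum_\alpha d_\alpha\alpha^\vee$ with all $d_\alpha\geq 1$) and $\langle z,\alpha^\vee\rangle=0$ for the remaining simple roots, one checks $z\in\ovl{A_0}$ with $\langle z,\theta^\vee\rangle=1$, and, since the relevant supports avoid $\alpha_*$, that $\langle z,\beta^\vee\rangle=0$ on $M$ and $\langle z,\beta^\vee\rangle=\langle z,\theta^\vee\rangle=1$ on $P$. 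Thus $z\in\ovl{A_0}\cap\ovl{A_2}$, which completes the proof. (One mild point of care: in the non-simply-laced case $\theta^\vee$ must be the highest root of $R^\vee$, not the coroot of the highest root of $R$, since that is the coroot cutting out the affine wall of $A_0$.)
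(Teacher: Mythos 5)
Your proof is correct, and it takes a genuinely different route from the paper's. The paper's argument splits into cases according to whether $h_x\leq\frac{h_\eta}{h_\eta+1}$; in the harder case it uses Lemma \ref{lemma:barycenter} to find a simple root $\alpha$ with $\langle x,\alpha^\vee\rangle\geq\frac{1}{h_\eta+1}$, forms the corresponding vertex $F$ of $\ovl{A}_0$, invokes Bourbaki's results that the stabilizer $W'$ of $F$ in $W_a$ is a reflection group with fundamental domain $\ovl{C}$, and then applies Lemma \ref{lemma:convex} to show that the $W'$-representative $x+\eps'$ of $x+\eps$ in $\ovl{C}$ satisfies $\langle x+\eps',\alpha^\vee\rangle\geq\frac{1}{h_\eta+1}-h_\eps\geq 0$ and hence lies in $\ovl{A}_0$, so that $A_2=w'(A_0)$ works. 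You instead fix $A_2$ from the outset by probing from $x+\eps$ toward an interior point of $A_0$, read off which walls have been crossed (the sets $P$ and $M$, with the strict inequalities $\langle x,\beta^\vee\rangle>\frac{h_\eta}{h_\eta+1}$ on $P$ and $\langle x,\beta^\vee\rangle<\frac{1}{h_\eta+1}$ on $M$ correctly extracted from the probing construction), and directly exhibit a vertex of $\ovl{A}_0$ lying in $\ovl{A}_2$; Lemma \ref{lemma:barycenter} enters only to guarantee that some simple root $\alpha_*$ avoids all the coroot supports in $J$. This is more elementary --- it dispenses with Lemma \ref{lemma:convex} and the Bourbaki machinery on stabilizers of facets --- at the cost of more explicit bookkeeping with coroot expansions. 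It is worth noting that both proofs ultimately locate the common point at a vertex of $\ovl{A}_0$ in a fundamental-coweight direction where $x$ has a large coordinate, so the underlying geometry is the same even though the verifications are quite different. Your parenthetical caution that $\theta^\vee$ must be the highest root of $R^\vee$ (not the coroot of the highest root of $R$) is well taken and correctly handled.
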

\begin{proof}
We immediately reduce to the case of an irreducible root system. 
Applying $\tld{w}(-)$ for $\tld{w} \in \tld{W}_a$ with $\tld{w}(A_1) = A_0$, we can assume without loss of generality that $A_1$ is $A_0$. 
Suppose that $h_x \leq \frac{h_\eta}{h_\eta+1}$. 
Then $h_{x+\eps} \leq h_x + h_\eps \leq 1$ so that $x+\eps$ is in the closure of $W(A_0)$ and we can take $A_2$ in $W(A_0)$. 

If $h_x > \frac{h_\eta}{h_\eta+1}$, then Lemma \ref{lemma:barycenter} implies that \eqref{eqn:wall} holds for some $\alpha \in \Delta$ which we now fix. 
Let $\alpha_0 = \sum_{\beta \in \Delta} c_\beta \beta$ be the highest root (recall that we are assuming that the root system is irreducible). 
Consider the facet $F$ defined as the intersection of the hyperplanes of the form
\[
\{ \lambda \in X^*(T) \otimes_\Z \R \mid \langle \lambda,\beta^\vee\rangle = 0\}
\]
for all $\beta \in \Delta$ with $\beta\neq \alpha$ and the hyperplane 
\[
\{ \lambda \in X^*(T) \otimes_\Z \R \mid \langle \lambda,\alpha_0^\vee\rangle = 1\}. 
\]
Then $F$ is contained in the closure of $A_0$. 
Let $W' \subset W_a$ be the stabilizer of $F$. 
Recall from \cite[Ch.~5, \S 3, Proposition 1]{bourbaki} that the stabilizer of $F$ in $W_a$ is generated by reflections along hyperplanes passing through $F$. 
In particular, a conjugate $W''$ of $W'$ by a translation in $X^*(T) \otimes_{\Z} \Q$ is a subgroup of $W$ corresponding to a root subsystem. 

We claim that 
\[
C \defeq \{ \lambda \in X^*(T) \otimes_\Z \R \mid \langle \lambda,\beta^\vee\rangle > 0 \, \forall \, \alpha \neq \beta \in \Delta,\, \langle \lambda,\alpha_0^\vee\rangle < 1 \}
\]
is a chamber for the action of $W'$. 
By \cite[Ch.~5, \S1, Proposition 5]{bourbaki} it suffices to show that any hyperplane passing through $F$ does not intersect $C$. 
Let $\gamma \in R^+$ such that $\langle y,\gamma^\vee\rangle \in \Z$ for some $y\in F$. 
If $\gamma =  \sum_{\beta\in \Delta} n_\beta \beta$, then $\langle y,\gamma^\vee \rangle = \langle y,n_\alpha \alpha^\vee\rangle = \frac{n_\alpha}{c_\alpha} \langle y,\alpha_0^\vee\rangle = \frac{n_\alpha}{c_\alpha}$ so that $n_\alpha = 0$ or $c_\alpha$. 
If $n_\alpha = 0$, then $\langle y,\gamma^\vee\rangle = 0$ for all $y\in F$ and $\langle x,\gamma^\vee\rangle > 0$ for all $x \in C$. 
If $n_\alpha = c_\alpha$, then $\langle y,\gamma^\vee\rangle = 1$ for all $y\in F$ and $\langle x,\gamma^\vee\rangle < 1$ for all $x\in C$. 
This establishes the claim. 
We conclude from \cite[Ch.~5, \S 3, Theorem 2]{bourbaki} that the closure $\ovl{C}$ is a fundamental domain for the action of $W'$. 

Let $x+\eps'$ denote the unique element of the $W'$-orbit of $x+\eps$ in $\ovl{C}$. 
By Lemma \ref{lemma:convex}, $\eps' \in \Conv(\eps)$ (note that the convex hull of $W(\eps)$ contains that of $W''(\eps)$) so that in particular $h_{\eps'} \leq h_\eps$. 
Then $\langle x+\eps',\alpha^\vee \rangle \geq \langle x,\alpha^\vee\rangle - h_{\eps'} \geq \frac{1}{h_\eta+1} - h_\eps \geq 0$. 
This implies that $x+\eps'$ is in the closure of $A_0$. 
We can then take $A_2$ so be $w'(A_0)$ where $w'\in W'$ and $w'(x+\eps') = x+\eps$. 
\end{proof}

Given $\lambda\in X^*(T)$ we let $W(\lambda)$ be the virtual representation $\sum_i(-1)^iR^i\Ind_{B^-}^{G}\lambda$ where $B^-$ denotes the Borel opposite to $B$. 
If $\lambda$ is dominant then $W(\lambda)$ is the representation $\Ind_{B^-}^{G}\lambda$, and we write $L(\lambda)$ for its (irreducible) socle.
Recall from \S~\ref{sec:intro} that we defined $\tld{w}_h=w_0t_{-\eta}$ where $w_0$ is the longest element of $W$.

\begin{lemma}\label{lemma:packet}
Let $\lambda_0\in C_0$, $\tld{w}_\lambda\in \tld{W}_1$, $\lambda' \in X_1(T)$, and $\nu \in X(T)^+$. 
Set $\lambda\defeq \tld{w}_\lambda\cdot \lambda_0$. 
\begin{enumerate}[label=(\arabic*)]
\item 
\label{lemma:packet:it:1}
If $[L(\lambda') \otimes L(\pi(\nu)):L(\lambda+p\nu)]_G \neq 0$, then $\lambda+p\nu\uparrow \lambda'+\pi(\nu')$ for some $\nu' \in \Conv(\nu)$. 
\item 
\label{lemma:packet:it:2}
If $\lambda+p\nu\uparrow \lambda'+\pi(\nu')$ for some $\nu' \in \Conv(\nu)$, then $h_\nu \leq \max_{v \in \ovl{A}_0} h_{\tld{w}_h\tld{w}_\lambda(v)} \leq h_\eta$. 
\end{enumerate}
\end{lemma}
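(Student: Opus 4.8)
The plan is to treat the two parts by rather different mechanisms. For part \ref{lemma:packet:it:1}, I would exploit the standard relationship between tensor product multiplicities for $G$ and the $\uparrow$ order, combined with the Steinberg tensor product theorem and linkage. Write $\nu = \sum_{i} a_i \varpi_i$ and decompose; the key input is that $L(\lambda') \otimes L(\pi(\nu))$ has a Weyl filtration up to the usual sum formula, and any composition factor $L(\mu)$ satisfies $\mu \leq \lambda' + \pi(\nu)$ in the dominance order, with the refinement via $\uparrow$ coming from linkage within a block. More precisely, I expect the argument to run: a composition factor $L(\lambda + p\nu)$ forces $\lambda + p\nu$ to lie below $\lambda' + \pi(\nu)$, and since all weights of $L(\pi(\nu))$ are of the form $\pi(\nu') $ with $\nu' $ in the convex hull $\Conv(\nu)$ (weights of the Weyl module $W(\pi(\nu))$ are $\pi$ of weights of $W(\nu)$, which lie in $\Conv(\nu)$), one gets $\lambda + p\nu \leq \lambda' + \pi(\nu')$ for the appropriate dominant representative $\nu' \in \Conv(\nu)$; upgrading $\leq$ to $\uparrow$ is then the content of \cite[II.6.13 or similar]{RAGS} once one checks the two weights are linked, which they are since they are composition factors of a common module restricted to the relevant block. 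I would cite the precise statement in \cite{RAGS} rather than reprove it.

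For part \ref{lemma:packet:it:2}, the relation $\lambda + p\nu \uparrow \lambda' + \pi(\nu')$ with $\lambda = \tld{w}_\lambda \cdot \lambda_0$, $\tld{w}_\lambda \in \tld{W}_1$, $\lambda_0 \in C_0$, $\lambda' \in X_1(T)$, and $\nu' \in \Conv(\nu)$ is a statement about $p$-alcoves. The strategy is: $\uparrow$ implies $\leq$ in dominance order, hence $\lambda + p\nu \leq \lambda' + \pi(\nu')$. Now $\lambda' $ is $p$-restricted and $\lambda = \tld{w}_\lambda \cdot \lambda_0$ with $\tld{w}_\lambda \in \tld{W}_1$, so $\lambda$ is also $p$-restricted; thus $\lambda' - \lambda$ has bounded $h_{(-)}$ (both lie in a restricted region, so $h_{\lambda' - \lambda}$ is at most roughly $p$, but I want a sharper statement). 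The cleaner route: since $\tld{w}_\lambda \in \tld{W}_1$, we can write $\lambda + \eta = \tld{w}_\lambda(\lambda_0 + \eta)$ where $\lambda_0 + \eta$ lies in the closure of $pA_0$; more usefully, apply $\tld{w}_h = w_0 t_{-\eta}$: the point is that $\tld{w}_h \tld{w}_\lambda(\bar A_0)$ is the relevant region controlling how far $\lambda'$ can be from $\lambda$. I would show $\nu' + $ (bounded error) lies in the image $\tld{w}_h \tld{w}_\lambda(\bar A_0)$ scaled appropriately, whence $h_\nu = h_{\nu'_+}$ (where $\nu'_+$ is the dominant representative, and $h_{\nu'} \le h_\nu$ by $\nu' \in \Conv(\nu)$, with equality of the max forcing $h_\nu = h_{\nu'_+}$ only if... — actually one wants the reverse inequality, so the estimate must come from comparing $p\nu$ against $\pi(\nu')$ using $\lambda + p\nu \leq \lambda' + \pi(\nu')$ and dividing by $p$ in the limit, giving $\nu \leq \nu'/1$ up to $O(1/p)$... no). Let me restructure: from $\lambda + p\nu \leq \lambda' + \pi(\nu')$ we get $p\nu - \pi(\nu') \leq \lambda' - \lambda \in \R_{\geq 0} R^+$ with $\lambda' - \lambda$ lying in a region of size controlled by $\tld{w}_h \tld{w}_\lambda(\bar A_0)$; since $\pi(\nu') \in \Conv(\pi\nu) = \pi(\Conv \nu)$ and $\pi$ preserves $h_{(-)}$, one deduces $h_{p\nu - \pi\nu'} = h_{p\nu} - (\text{something}) \geq (p-1)h_\nu$ roughly, and comparing with the bound on $\lambda' - \lambda$ (whose $h$ is at most $\max_{v \in \bar A_0} h_{\tld{w}_h \tld{w}_\lambda(pv)} + O(1)$, i.e. $p \cdot \max_{v\in \bar A_0} h_{\tld{w}_h\tld{w}_\lambda(v)} + O(1)$) yields $h_\nu \leq \max_{v\in \bar A_0} h_{\tld{w}_h \tld{w}_\lambda(v)}$ after dividing by $p$ and using integrality (the $O(1)$ error washes out since both sides are bounded integers once $p$ is large, or one argues exactly). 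The final inequality $\max_{v \in \bar A_0} h_{\tld{w}_h \tld{w}_\lambda(v)} \leq h_\eta$ follows since $\tld{w}_h \tld{w}_\lambda(\bar A_0)$ is (the closure of) a dominant restricted alcove — because $\tld{w}_\lambda \in \tld{W}_1 = \tld{W}^+ \cap \tld{w}_h \tld{W}^+$ means $\tld{w}_h^{-1}\tld{w}_h\tld{w}_\lambda = \tld{w}_\lambda \in \tld{W}^+$... wait, I need $\tld{w}_h \tld{w}_\lambda \in \tld{W}^+$, equivalently $\tld{w}_\lambda \in \tld{w}_h^{-1}\tld{W}^+$; since $\tld{w}_h^{-1} = t_\eta w_0$ and $\tld{w}_h \tld{W}^+ \supseteq \tld{W}_1 \ni \tld{w}_\lambda$... actually $\tld{W}_1 = \tld{W}^+ \cap \tld{w}_h \tld{W}^+$ gives $\tld{w}_\lambda \in \tld{w}_h\tld{W}^+$, i.e. $\tld{w}_h^{-1}\tld{w}_\lambda \in \tld{W}^+$, not quite what I wrote; but the symmetric statement about $\tld{w}_h\tld{w}_\lambda$ being "dominant" should be extractable — in any case $\tld{w}_h\tld{w}_\lambda(A_0)$ lands in a fixed bounded region and one checks directly that its points $v$ satisfy $h_v \leq h_\eta$, e.g. because the alcove $\tld{w}_h\tld{w}_\lambda(A_0)$ lies in the lowest-alcove "box" $\{0 < \langle \cdot + \eta, \alpha^\vee\rangle < p\}$ rescaled, equivalently in $\Conv$ of the relevant $W$-orbit whose $h$-value is $h_\eta$).

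The main obstacle I anticipate is part \ref{lemma:packet:it:2}: getting the \emph{exact} inequality $h_\nu \leq \max_{v \in \bar A_0} h_{\tld{w}_h\tld{w}_\lambda(v)}$ rather than one with an additive $O(1)$ slack, and correctly pinning down why the region $\tld{w}_h\tld{w}_\lambda(\bar A_0)$ is the right one controlling $\lambda' - \lambda$ given that $\lambda' $ ranges over all of $X_1(T)$. The cleanest fix is likely to phrase things in terms of $p$-alcoves directly: $\lambda \in C$ for a $p$-restricted dominant alcove $C$ with $C = \tld{w}_\lambda\cdot C_0$ (after rescaling $C_0$), $\lambda'$ lies in a $p$-restricted alcove, and the $\uparrow$-relation $\lambda + p\nu \uparrow \lambda' + \pi(\nu')$ combined with $\nu, \nu' \in X(T)^+$ forces, via translation-invariance of $\uparrow$ under $p X^*(T)$, that $\nu$ itself satisfies a bounded condition read off from the geometry of $C$ versus the restricted box, which is exactly $h_\nu \leq \max_{v\in\bar A_0}h_{\tld{w}_h\tld{w}_\lambda(v)}$. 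I would also double-check the second inequality $\le h_\eta$ using Lemma \ref{lemma:barycenter} or a direct computation on $\bar A_0$, since that is where the "optimality" (the sharp $h$-genericity threshold of the main theorems) is secretly encoded.
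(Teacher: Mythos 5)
Your part \ref{lemma:packet:it:1} is essentially the paper's argument: pass from $\JH(L(\lambda')\otimes L(\pi(\nu)))$ to $\JH(W(\lambda')\otimes L(\pi(\nu)))$, use the Weyl filtration with subquotients $W(\lambda'+\pi(\nu'))$ for $\nu'$ ranging over ($\pi^{-1}$ of) the weights of $L(\pi(\nu))$, all of which lie in $\Conv(\nu)$, and apply strong linkage. One correction: your closing step "upgrading $\leq$ to $\uparrow$ once one checks the two weights are linked" is not a valid principle --- two linked weights with $\mu\leq\kappa$ need not satisfy $\mu\uparrow\kappa$. What you must do (and what \cite[II.6.13]{RAGS} actually gives) is locate the specific factor $W(\lambda'+\pi(\nu'))$ of the filtration containing $L(\lambda+p\nu)$ and apply strong linkage to \emph{that} Weyl module; you also need to replace $\nu'$ by a conjugate making $\lambda'+\pi(\nu')$ dominant while staying in $\Conv(\nu)$, which is exactly Lemma \ref{lemma:convex} (cf.~\cite[Lemma 2.2.2]{MLM}).

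Part \ref{lemma:packet:it:2} is where your proposal has a genuine gap, which you flag yourself: you never produce the bound on $\lambda'-\lambda$ in terms of $\tld{w}_h\tld{w}_\lambda(\ovl{A}_0)$, and your sketches ("dividing by $p$ in the limit", "$O(1)$ error washes out") do not close it. The missing step is short but specific. Pick a dominant root $\alpha_0$ with $\langle\nu,\alpha_0^\vee\rangle=h_\nu$; then $\uparrow\,\Rightarrow\,\leq$ and $h_{\nu'}\leq h_\nu$ give $(p-1)h_\nu\leq\langle p\nu-\pi(\nu'),\alpha_0^\vee\rangle\leq\langle\lambda'-\lambda,\alpha_0^\vee\rangle$. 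The only input on $\lambda'$ is restrictedness, $\langle\lambda',\alpha_0^\vee\rangle\leq(p-1)\langle\eta,\alpha_0^\vee\rangle$, and the crux is the exact identity
\[
\langle(p-1)\eta-\tld{w}_\lambda\cdot\lambda_0,\alpha_0^\vee\rangle \;=\; \langle\tld{w}_h\tld{w}_\lambda\cdot\lambda_0+\eta,\,-w_0\alpha_0^\vee\rangle \;=\; p\,\bigl\langle\tld{w}_h\tld{w}_\lambda\bigl(\tfrac{1}{p}(\lambda_0+\eta)\bigr),\,-w_0\alpha_0^\vee\bigr\rangle,
\]
coming from $\tld{w}_h=w_0t_{-\eta}$ and $\tld{w}\cdot\lambda_0+\eta=p\,\tld{w}(\tfrac{1}{p}(\lambda_0+\eta))$; this is the precise sense in which $\tld{w}_h\tld{w}_\lambda(\ovl{A}_0)$ controls $\lambda'-\lambda$. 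Since $\tfrac{1}{p}(\lambda_0+\eta)\in A_0$, one gets the strict bound $\tfrac{p-1}{p}h_\nu<\max_{v\in\ovl{A}_0}h_{\tld{w}_h\tld{w}_\lambda(v)}\leq h_\eta\leq p-1$, and the "$O(1)$ slack" you worry about is then removed by integrality: $h_\nu<p$ forces $h_\nu=\lceil\tfrac{p-1}{p}h_\nu\rceil\leq\max_{v\in\ovl{A}_0}h_{\tld{w}_h\tld{w}_\lambda(v)}$. Your justification of the final inequality $\leq h_\eta$ is correct in spirit; the clean statement is that $-w_0\eta-\tld{w}_h\tld{w}_\lambda(v)=-w_0(\tld{w}_\lambda(v))$ is dominant for $v\in\ovl{A}_0$ because $\tld{w}_\lambda(v)$ lies in the closure of a dominant restricted alcove.
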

\begin{proof}
Suppose that $L(\lambda+p\nu) \in \JH(L(\lambda') \otimes L(\pi(\nu))) \subset \JH(W(\lambda') \otimes L(\pi(\nu)))$. 
Then $L(\lambda+p\nu) \in \JH(W(\lambda'+\pi(\nu')))$ for some $\nu' \in \Conv(\nu)$. 
Moreover, we can assume without loss of generality that $\lambda'+\pi(\nu') \in X(T)^+$ by \cite[Lemma 2.2.2]{MLM}. 
By the linkage principle, we conclude that 
\begin{equation}\label{eqn:uparrow}
\lambda+p\nu \uparrow \lambda'+\pi(\nu') 
\end{equation}
for some $\nu' \in \Conv(\nu)$. 

Suppose now that \eqref{eqn:uparrow} holds. 
If we let $\alpha_0$ be a dominant root such that $\langle \nu,\alpha_0^\vee \rangle = h_\nu$, then \eqref{eqn:uparrow} implies that 
\begin{align*}
(p-1)h_\nu &\leq \langle p\nu-\pi(\nu'),\alpha_0^\vee \rangle \\
& \leq \langle \lambda'-\lambda,\alpha_0^\vee \rangle \\
& =  \langle \lambda'-\tld{w}_\lambda\cdot \lambda_0,\alpha_0^\vee \rangle \\
& \leq \langle (p-1)\eta-\tld{w}_\lambda\cdot \lambda_0,\alpha_0^\vee \rangle \\
& = \langle p\eta-(\tld{w}_\lambda\cdot \lambda_0+\eta),\alpha_0^\vee \rangle \\
& = \langle -pw_0\eta+(w_0\tld{w}_\lambda)\cdot \lambda_0+\eta,-w_0\alpha_0^\vee \rangle \\
& = \langle \tld{w}_h\tld{w}_\lambda\cdot \lambda_0+\eta,-w_0\alpha_0^\vee \rangle 
\end{align*} 
where
\begin{itemize}
\item the second inequality uses that $\alpha_0$ is dominant; and 
\item the third inequality uses $\alpha_0$ is positive and $\lambda' \in X_1(T)$.
\end{itemize}
We deduce that 
\begin{align*}
\frac{p-1}{p}h_\nu &\leq  \langle \tld{w}_h\tld{w}_\lambda(\frac{1}{p}(\lambda_0+\eta)),-w_0\alpha_0^\vee \rangle \\
&< \max_{v \in \ovl{A}_0} \langle \tld{w}_h\tld{w}_\lambda(v),-w_0\alpha_0^\vee \rangle\\
&\leq \max_{v \in \ovl{A}_0} h_{\tld{w}_h\tld{w}_\lambda(v)} \\
&\leq h_{-w_0\eta} = h_\eta \\
&\leq p-1
\end{align*} 
where 
\begin{itemize}
\item the strict inequality uses that $\lambda \in C_0$; and 
\item the fourth inequality uses that $-w_0\eta-\tld{w}_h\tld{w}_\lambda(v) \in X(T)^+$ noting that $\tld{w}_h^{-1}(-w_0\eta-\tld{w}_h\tld{w}_\lambda(v)) = \tld{w}_\lambda(v)$ is in the closure of a restricted alcove. 
\end{itemize}
This implies that $h_\nu < p$ so that $h_\nu = \lceil \frac{p-1}{p}h_\nu \rceil \leq \max_{v \in \ovl{A}_0} h_{\tld{w}_h\tld{w}_\lambda(v)} \leq h_\eta$. 
(The last inequality follows from the fact that $\eta-\tld{w}_h\tld{w}_\lambda(v)$ is dominant for all $v\in \ovl{A}_0$.) 
\end{proof}

Given $m\in\Z$ and a $p$-alcove 
\[
C = \{\mu \in X^*(T) \otimes_{\Z} \R\mid n_\alpha p<\langle \mu+\eta,\alpha^\vee\rangle<(n_\alpha+1)p,\alpha\in R^+\}, 
\]
we say that $\lambda\in X^*(T)$ is $m$-deep in the $p$-alcove $C$ if for all $\alpha\in R^+$, $n_\alpha p+m<\langle \lambda+\eta,\alpha^\vee\rangle<(n_\alpha+1)p-m$. 

\begin{rmk}\label{rmk:depth}
For an $m$-deep weight in an alcove to exist, one must have $p\geq (m+1)(h_\eta+1)$. 
Indeed, suppose that $\lambda$ is $m$-deep in $C_0$. 
Then $\lambda-m\eta$ is dominant.
Then $\langle (m+1)\eta,\alpha^\vee\rangle < p-m$ for all $\alpha\in R$ from which we deduce the desired inequality. 
\end{rmk}

\begin{lemma}\label{lemma:apriori}
Let $s\in W$, $\mu - \eta \in C_0$, and $\lambda \in X_1(T)$. 
If $\tld{w} \in \tld{W}$ is such that $\tld{w} \cdot (\mu-\eta+s\pi\tld{w}^{-1}(0))+\eta\in X(T)^+$ and $\tld{w} \cdot (\mu-\eta+s\pi\tld{w}^{-1}(0)) \leq \tld{w}_h \cdot \lambda$, then $h_{\tld{w}(0)} = h_{\tld{w}^{-1}(0)} \leq h_\eta+1$. 
If $\mu-\eta$ or $\lambda$ is $1$-deep in their respective $p$-alcoves, then $h_{\tld{w}(0)} = h_{\tld{w}^{-1}(0)} \leq h_\eta$. 
\end{lemma}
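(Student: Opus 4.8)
The plan is to turn the two conditions on $\tld w$ into statements about a single dominant weight and then read off the bound by pairing against a well-chosen coroot, in the spirit of Lemma~\ref{lemma:packet}. Write $\tld w = t_\nu w$ with $w\in W$ and set $\ka := \tld w^{-1}(0)$, so $\tld w(0) = \nu = -w\ka$; since $\nu\mapsto h_\nu$ is $W$-invariant and $-R=R$, we immediately get $h_{\tld w(0)} = h_\nu = h_\ka = h_{\tld w^{-1}(0)}$, which gives the equality asserted in both conclusions. Unwinding the $p$-dot action,
\[
D \;:=\; \tld w\cdot(\mu-\eta+s\pi\ka)+\eta \;=\; w\bigl(\mu+(s\pi-p)\ka\bigr),
\]
so the hypothesis $\tld w\cdot(\mu-\eta+s\pi\ka)+\eta\in X(T)^+$ says exactly that $D$ is dominant. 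Likewise, writing $\tld w_h = t_{-w_0\eta}w_0$, a direct computation gives $Z := \tld w_h\cdot\lambda+\eta = w_0(\lambda+\eta)-p\,w_0\eta$, with $\langle Z,\alpha^\vee\rangle = (p-1)+\langle w_0\lambda,\alpha^\vee\rangle\in[0,p-1]$ for every $\alpha\in\Delta$ (using $\lambda\in X_1(T)$ and $-w_0\Delta=\Delta$); in particular $Z$ is $p$-restricted dominant, and the hypothesis $\tld w\cdot(\mu-\eta+s\pi\ka)\le \tld w_h\cdot\lambda$ becomes $D\le Z$.

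Next I would choose a root $\xi$ with $\langle\ka,\xi^\vee\rangle = h_\ka$ and pair $\xi^\vee$ against the identity $\mu+(s\pi-p)\ka = w^{-1}D$, obtaining
\[
p\,h_\ka \;=\; \langle\mu,\xi^\vee\rangle + \langle s\pi\ka,\xi^\vee\rangle - \langle D,w\xi^\vee\rangle .
\]
Each term is controlled: $\langle\mu,\xi^\vee\rangle\le h_\mu\le p-1$ since $\mu-\eta\in C_0$ forces $\langle\mu,\alpha^\vee\rangle<p$ for $\alpha\in R^+$; $\langle s\pi\ka,\xi^\vee\rangle\le h_{s\pi\ka}=h_\ka$ since $s\pi$ permutes $R$; and $-\langle D,w\xi^\vee\rangle\le h_D$ since $w\xi\in R$ and $h_D\ge0$ ($D$ being dominant). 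Hence $(p-1)h_\ka\le h_\mu+h_D$.

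The last ingredient is the bound $h_D\le (p-1)h_\eta$. Since $D$ is dominant, the maximum defining $h_D$ is attained at a dominant root $\gamma$, so $\gamma^\vee$ is a dominant coweight; writing $\gamma^\vee=\sum_{\alpha\in\Delta}c_\alpha\alpha^\vee$ with $c_\alpha\in\Z_{\ge0}$ and using $D\le Z$,
\[
h_D \;=\; \langle D,\gamma^\vee\rangle \;\le\; \langle Z,\gamma^\vee\rangle \;=\; \sum_{\alpha\in\Delta}c_\alpha\langle Z,\alpha^\vee\rangle \;\le\; (p-1)\sum_{\alpha\in\Delta}c_\alpha \;=\; (p-1)\langle\eta,\gamma^\vee\rangle \;\le\; (p-1)h_\eta .
\]
Substituting, $(p-1)h_\ka\le (p-1)+(p-1)h_\eta$, i.e.\ $h_\ka\le h_\eta+1$. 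For the refinement, note that if $\mu-\eta$ is $1$-deep in $C_0$ then $h_\mu\le p-2$, and if $\lambda$ is $1$-deep in its $p$-alcove then $\langle w_0\lambda,\alpha^\vee\rangle\le-1$ for all $\alpha\in\Delta$, whence $\langle Z,\alpha^\vee\rangle\le p-2$ and $h_D\le(p-2)h_\eta$; in either case the chain above improves to the strict inequality $(p-1)h_\ka<(p-1)(h_\eta+1)$, and since $h_\ka,h_\eta\in\Z$ we conclude $h_\ka\le h_\eta$.

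I expect the only real work to be the root-theoretic bookkeeping of the third paragraph---verifying the formula for $Z$, and the standard facts that a dominant weight maximizes $\langle D,\cdot\rangle$ at a dominant coroot and that $\langle\eta,\gamma^\vee\rangle=\operatorname{ht}(\gamma^\vee)\le h_\eta$ for positive coroots---precisely because that is where the constant must be tracked: the cruder estimate $h_\ka\lesssim\|(p-s\pi)^{-1}\|\cdot\|\mu-w^{-1}D\|$ would only yield $h_\ka=O(h_\eta)$ rather than the sharp $h_\eta+1$.
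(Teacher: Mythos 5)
Your argument is correct and takes essentially the same route as the paper's proof: both bound $p\,h_{\tld{w}^{-1}(0)}$ by pairing $\mu+(s\pi-p)\tld{w}^{-1}(0)$ against a coroot realizing $h_{\tld{w}^{-1}(0)}$, estimating the three resulting terms by $p-1$ (from $\mu-\eta\in C_0$), by $h_{\tld{w}^{-1}(0)}$, and by $(p-1)h_\eta$ (from dominance of $D$ together with $D\leq \tld{w}_h\cdot\lambda+\eta$ and $\lambda\in X_1(T)$, evaluated on a dominant coroot). The sharpening in the $1$-deep cases is likewise obtained exactly as in the paper, by making the corresponding inequality strict.
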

\begin{proof}
Let $\sigma\in W$ be such that $\sigma\tld{w} \in\tld{W}^+$, and let $\tau$ be the image of $\sigma\tld{w}$ in $W$. 
Letting $\alpha_0$ be the highest root so that $h_{\tld{w}(0)} = \langle \sigma\tld{w}(0),\alpha_0^\vee\rangle$, the hypotheses imply that 
\begin{align} 
(p-1)(h_{\tld{w}(0)}-1) &= ph_{\tld{w}(0)} -h_{\tld{w}(0)} - (p-1)\nonumber \\
	& \label{eqn:apriori} \leq p \langle \tld{w}(0),\sigma^{-1}\alpha_0^\vee\rangle + \langle \tld{w}^{-1}(0),\pi^{-1}((\tau s)^{-1}\alpha_0^\vee)\rangle + \langle \mu,\tau^{-1}\alpha_0^\vee\rangle  \\ 
	&=  \langle p\sigma\tld{w}(0)+\tau(\mu+s\pi(\tld{w}^{-1}(0)),\alpha_0^\vee\rangle \nonumber\\
	&= \langle \sigma\tld{w}\cdot (\mu-\eta+s\pi(\tld{w}^{-1}(0)))+\eta,\alpha_0^\vee \rangle \nonumber\\
	&\leq \langle \tld{w}_h \cdot \lambda+\eta,\alpha_0^\vee \rangle \nonumber\\
	& \leq (p-1)h_\eta, \label{eqn:apriori'} 
\end{align}
from which we deduce that $h_{\tld{w}(0)} \leq h_\eta+1$. 
If $\mu-\eta$ (resp.~$\lambda$) is $1$-deep in its $p$-alcove, then the inequality in \eqref{eqn:apriori} (resp.~\eqref{eqn:apriori'}) is strict. 
The result follows. 
\end{proof}

\begin{lemma}\label{lemma:presentation}
Let $m \geq 0$. 
Suppose that $\mu-\eta \in X^*(T)$ is $m$-deep in $C_0$ and $\sigma(\mu)+p\nu-s\pi\nu-\eta$ is $(-m+1)$-deep in $C_0$ for $\sigma,s \in W$ and $\nu\in X^*(T)$. 
Then $t_\nu \sigma \in \Omega$. 
\end{lemma}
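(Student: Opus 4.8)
The plan is to prove that $(t_\nu\sigma)\cdot(\mu-\eta)$ lies in $C_0$. Granting this, since $\mu-\eta\in C_0$ as well (being $m$-deep with $m\geq 0$), the element $t_\nu\sigma$ preserves the $p$-alcove $C_0$ under the $p$-dot action; writing $t_\nu\sigma=\tld{w}_a\delta$ with $\tld{w}_a\in W_a$ and $\delta\in\Omega$, the factor $\delta$ fixes $C_0$, hence so does $\tld{w}_a$, and simple transitivity of $W_a$ on $p$-alcoves forces $\tld{w}_a=e$, i.e.\ $t_\nu\sigma=\delta\in\Omega$. Put $\mu''\defeq\sigma(\mu)+p\nu$, so that $(t_\nu\sigma)\cdot(\mu-\eta)=\mu''-\eta$, and $\mu'\defeq\sigma(\mu)+p\nu-s\pi\nu=\mu''-s\pi\nu$. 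The two hypotheses say that $m<\langle\mu,\beta^\vee\rangle<p-m$ and $1-m<\langle\mu',\beta^\vee\rangle<p+m-1$ for every $\beta\in R^+$, and the goal becomes $0<\langle\mu'',\alpha^\vee\rangle<p$ for every $\alpha\in R^+$.

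The key point is that the perturbation $s\pi\nu$ is uniformly small. For $\beta\in R^+$ one has $\langle\mu',\beta^\vee\rangle-\langle\mu,\sigma^{-1}\beta^\vee\rangle=p\langle\nu,\beta^\vee\rangle-\langle s\pi\nu,\beta^\vee\rangle$; as $\sigma^{-1}\beta^\vee$ is a coroot, $|\langle\mu,\sigma^{-1}\beta^\vee\rangle|<p-m$, so combining with the bound on $\langle\mu',\beta^\vee\rangle$ gives $p\langle\nu,\beta^\vee\rangle-\langle s\pi\nu,\beta^\vee\rangle\in(1-p,\,2p-1)$. Evaluating at a $\beta_0\in R^+$ with $|\langle\nu,\beta_0^\vee\rangle|=h_\nu$, and using that $\langle s\pi\nu,\beta_0^\vee\rangle=\langle\nu,\gamma^\vee\rangle$ for some root $\gamma$ (so $|\langle s\pi\nu,\beta_0^\vee\rangle|\leq h_\nu$), one gets $(p-1)h_\nu<2p-1$, hence $h_\nu\leq 2$; in particular $|\langle s\pi\nu,\alpha^\vee\rangle|\leq 2$ for every root $\alpha$.

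Next I would pin down $\nu$ by an integrality argument carried out one positive root at a time. Fix $\alpha\in R^+$ and consider the integer $\langle\mu',\alpha^\vee\rangle=\langle\mu,\sigma^{-1}\alpha^\vee\rangle+p\langle\nu,\alpha^\vee\rangle-\langle s\pi\nu,\alpha^\vee\rangle$, which lies in $(1-m,p+m-1)$. Using $\langle\mu,\sigma^{-1}\alpha^\vee\rangle\in(m-p,p-m)$ and $|\langle s\pi\nu,\alpha^\vee\rangle|\leq 2$ one checks: if $\langle\nu,\alpha^\vee\rangle\geq 2$, or if $\langle\nu,\alpha^\vee\rangle=1$ and $\sigma^{-1}\alpha\in R^+$, then $\langle\mu',\alpha^\vee\rangle$ is forced into the open interval $(p+m-2,\,p+m-1)$, which contains no integer; and if $\langle\nu,\alpha^\vee\rangle\leq -1$, or if $\langle\nu,\alpha^\vee\rangle=0$ and $\sigma^{-1}\alpha\in R^-$, then $\langle\mu',\alpha^\vee\rangle$ is forced into $(1-m,\,2-m)$, again containing no integer. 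Hence for every $\alpha\in R^+$ we have $\langle\nu,\alpha^\vee\rangle\in\{0,1\}$, with $\langle\nu,\alpha^\vee\rangle=1$ exactly when $\sigma^{-1}\alpha\in R^-$.

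It remains only to compute: for $\alpha\in R^+$, if $\langle\nu,\alpha^\vee\rangle=0$ then $\sigma^{-1}\alpha\in R^+$ and $\langle\mu'',\alpha^\vee\rangle=\langle\mu,\sigma^{-1}\alpha^\vee\rangle\in(m,p-m)$, whereas if $\langle\nu,\alpha^\vee\rangle=1$ then $\sigma^{-1}\alpha\in R^-$, so $\langle\mu,\sigma^{-1}\alpha^\vee\rangle\in(m-p,-m)$ and $\langle\mu'',\alpha^\vee\rangle=p+\langle\mu,\sigma^{-1}\alpha^\vee\rangle\in(m,p-m)$. In either case $\langle\mu'',\alpha^\vee\rangle\in(0,p)$, so $\mu''-\eta\in C_0$ and the proof concludes. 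The one genuine obstacle is the bound $h_\nu\leq 2$: once the $s\pi\nu$-perturbation is known to be negligible, everything downstream is a routine ``no integer in a length-one interval'' estimate, but securing that bound relies on the slightly non-obvious step of comparing $\mu'$ with $\mu$ through $\sigma$, rather than attempting to locate the $p$-alcove of $\mu''$ directly.
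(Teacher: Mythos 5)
Your proposal is correct and follows essentially the same route as the paper: both first establish $h_\nu\leq 2$ by comparing $\langle\mu',\beta^\vee\rangle$ with $\langle\mu,\sigma^{-1}\beta^\vee\rangle$, and both then conclude $\sigma(\mu)+p\nu-\eta\in C_0$ (hence $t_\nu\sigma\in\Omega$) by a ``no integer in a short open interval'' argument. The only cosmetic difference is in this last step: the paper notes that $\sigma(\mu)+p\nu-\eta$ is automatically $m$-deep in \emph{some} $p$-alcove (being in the $p$-dot orbit of $\mu-\eta$) and pins down $n_\alpha=0$, whereas you run a root-by-root case analysis on $\langle\nu,\alpha^\vee\rangle$ and the sign of $\sigma^{-1}\alpha$; both are valid.
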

\begin{proof}
Let $\mu-\eta, \sigma,s,$ and $\nu$ be as in the statement of the lemma. 
We first claim that $h_\nu \leq 2$. 
For $\alpha\in R^+$, we have that 
\begin{equation}\label{eqn:negdepth}
-m+1 < \langle \sigma(\mu)+(p-s\pi)\nu,\alpha^\vee\rangle < p+m-1. 
\end{equation}
Using that $-p+m < \langle \sigma(\mu),\alpha^\vee\rangle < p-m$ for all $\alpha \in R^+$, we have that $|p \langle \nu,\alpha^\vee\rangle - \langle s\pi\nu,\alpha^\vee\rangle|\leq 2p-2$ for all $\alpha \in R^+$. 
There exists $\alpha \in R^+$ such that $|\langle \nu,\alpha^\vee\rangle| = h_\nu$ so that $(p-1)h_\nu \leq |p \langle \nu,\alpha^\vee\rangle - \langle s\pi\nu,\alpha^\vee\rangle|\leq 2p-2$. 
The claim follows. 

Since $\mu-\eta \in X^*(T)$ is $m$-deep in $C_0$, for each $\alpha \in R^+$ we have 
\[
n_\alpha p+m < \langle \sigma(\mu) + p\nu,\alpha^\vee \rangle < (n_\alpha+1) p-m 
\]
for a unique $n_\alpha \in \Z$. 
On the other hand, \eqref{eqn:negdepth} and that $h_\nu \leq 2$ imply that 
\[
-m-1 < \langle \sigma(\mu)+p\nu,\alpha^\vee\rangle < p+m+1 
\]
for each $\alpha \in R^+$. 
Together, these imply that $n_\alpha = 0$ for all $\alpha \in R^+$ so that $\sigma(\mu) + p\nu -\eta \in C_0$. 
Equivalently, we have $t_\nu \sigma \in \Omega$. 
\end{proof}

\begin{lemma}\label{lemma:LAP}
Suppose that the center $Z$ of $G$ is connected. 
Suppose also that $\lambda_0,\mu_0 \in X^*(T)$ are in the closure of $C_0$ and $\tld{w}_\lambda,\tld{w}_\mu \in \tld{W}$ such that 
\begin{enumerate}[label=(\arabic*)]
\item \label{item:uparrow} $\pi^{-1}(\tld{w}_\lambda)\cdot \lambda_0 \uparrow \pi^{-1}(\tld{w}_\mu)\cdot \mu_0$; and
\item \label{item:modWa} $t_{\lambda_0}\tld{w}_\lambda W_a = t_{\mu_0} \tld{w}_\mu W_a$. 
\end{enumerate}
Then $\lambda_0 = \mu_0$. 
Furthermore, let $F$ be the facet of $C_0$ determined by $\lambda_0$. 
Then $\pi^{-1}(\tld{w}_\lambda)\cdot F \uparrow \pi^{-1}(\tld{w}_\mu)\cdot F$. 
In particular, if $\lambda_0 \in C_0$, then $\tld{w}_\lambda \uparrow \tld{w}_\mu$. 
\end{lemma}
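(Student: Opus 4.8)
The plan is to exploit that a $\uparrow$ relation on $p$-alcoves forces a comparison on the associated (translated) alcoves, combined with the fact that $t_{\lambda_0}\tld{w}_\lambda$ and $t_{\mu_0}\tld{w}_\mu$ lie in the same $W_a$-coset, hence differ by an element of $\Omega$. First I would unwind the definitions: applying $\pi$ (which preserves $\uparrow$, being induced by an automorphism of the root datum fixing $\eta$), condition \ref{item:uparrow} says $\tld{w}_\lambda\cdot\lambda_0\uparrow\tld{w}_\mu\cdot\mu_0$, i.e.\ the $p$-alcove containing (the image of) $\tld{w}_\lambda\cdot\lambda_0$ is $\uparrow$-below that containing $\tld{w}_\mu\cdot\mu_0$. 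Since $\uparrow$ refines the linkage relation under $W_a$, and $W_a$ acts simply transitively on $p$-alcoves via the $p$-dot action, the two weights $\tld{w}_\lambda\cdot\lambda_0$ and $\tld{w}_\mu\cdot\mu_0$ lie in the same $W_a$-dot orbit; this is exactly the content of \ref{item:modWa} once one checks that $t_{\lambda_0}\tld{w}_\lambda$ and $t_{\mu_0}\tld{w}_\mu$ encode the alcove positions of these weights. So \ref{item:modWa} is the ``same block'' hypothesis and \ref{item:uparrow} is a refinement within that block.

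Next I would extract $\lambda_0=\mu_0$. Write $t_{\lambda_0}\tld{w}_\lambda=t_{\mu_0}\tld{w}_\mu\,\omega$ for some $\omega\in\Omega$ (using \ref{item:modWa} together with the decomposition $\tld{W}=W_a\rtimes\Omega$). Applying the $p$-dot action to $0$: $t_{\lambda_0}\tld{w}_\lambda\cdot 0$ and $t_{\mu_0}\tld{w}_\mu\cdot 0$ differ by the action of $\omega$ on a specific weight. On the other hand, $\lambda_0$ is the unique point of $\ovl{C_0}$ in the $W_a$-dot orbit of $\tld{w}_\lambda\cdot\lambda_0$ that records the facet; more precisely, $\lambda_0$ is recovered from $t_{\lambda_0}\tld{w}_\lambda$ by projecting to $\ovl{C_0}$ (here is where connectedness of $Z$ enters: it guarantees $\Omega$ acts on $\ovl{C_0}$ in a way compatible with this projection, so that two elements of a common $W_a\Omega$-structure anchored in $\ovl{C_0}$ have the same anchor—equivalently $X^*(T)/\Z R$ has no issues obstructing the identification of facets). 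Comparing anchors forces $\lambda_0=\mu_0$ and $\omega$ to stabilize the facet $F$ of $C_0$ determined by $\lambda_0$.

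With $\lambda_0=\mu_0$ in hand, the facet statement is a direct translate: the $\uparrow$ relation $\tld{w}_\lambda\cdot\lambda_0\uparrow\tld{w}_\mu\cdot\lambda_0$ upgrades to $\tld{w}_\lambda\cdot F\uparrow\tld{w}_\mu\cdot F$ because $\uparrow$ on $p$-alcoves is stable under moving a weight within its facet while keeping track of the (open) alcove—one checks that the alcove-theoretic $\uparrow$ only depends on the facet data and the ambient alcove, so the relation propagates from the chosen point $\lambda_0\in\ovl{C_0}$ to all of $F$. Applying $\pi^{-1}$ (again $\uparrow$-equivariant) gives $\pi^{-1}(\tld{w}_\lambda)\cdot F\uparrow\pi^{-1}(\tld{w}_\mu)\cdot F$. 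Finally, if $\lambda_0\in C_0$ (not just its closure) then $F=C_0$, so $\tld{w}_\lambda\cdot C_0\uparrow\tld{w}_\mu\cdot C_0$, which by the bijection $\tld{w}\mapsto\tld{w}\cdot C_0$ means precisely $\tld{w}_\lambda\uparrow\tld{w}_\mu$ in $\tld{W}$, where the $\Omega$-components automatically agree because they are already equal by \ref{item:modWa} together with $\lambda_0=\mu_0$.

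The main obstacle I expect is the rigorous justification that the ``anchor in $\ovl{C_0}$'' is well-defined and respected by the $W_a\rtimes\Omega$ bookkeeping, i.e.\ the precise role of the connected-center hypothesis in ensuring $t_{\lambda_0}\tld{w}_\lambda W_a$ determines $\lambda_0$ uniquely. Without connected center, $\Omega$ can be larger and two different closure points of $C_0$ may lie in the same $W_a$-coset structure, breaking the identification; so the bulk of the careful argument is verifying that under the hypothesis the map $t_{\lambda_0}\tld{w}_\lambda W_a\mapsto\lambda_0\in\ovl{C_0}$ is single-valued, which should follow from $\Omega$ acting freely on the relevant set of facets (equivalently, from $\tld{W}/W_a\cong X^*(T)/\Z R$ being detected faithfully on $\ovl{C_0}$ when $Z$ is connected). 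The rest is a sequence of routine ``translate the $\uparrow$ relation'' verifications.
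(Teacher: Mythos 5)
The second half of your plan (propagating the $\uparrow$ relation from the point $\lambda_0$ to the whole facet $F$, and reading off $\tld{w}_\lambda \uparrow \tld{w}_\mu$ when $F = C_0$, with the $\Omega$-components matching) agrees with the paper's argument and is fine in outline. The first half --- the proof that $\lambda_0 = \mu_0$, which is the heart of the lemma and the only place the connected-center hypothesis enters --- has a genuine gap, one you flag yourself as the ``main obstacle'' without supplying the missing argument. The first concrete problem is that you treat \ref{item:modWa} as if it were the linkage consequence of \ref{item:uparrow} (``this is exactly the content of \ref{item:modWa}''). It is not: \ref{item:uparrow} yields $\pi^{-1}(\tld{w}\tld{w}_\lambda)\cdot\lambda_0 = \pi^{-1}(\tld{w}_\mu)\cdot\mu_0$ for some $\tld{w}\in W_a$, hence $\mu_0-\lambda_0 \equiv p\,\pi^{-1}(\tld{w}_\mu^{-1}\tld{w}_\lambda)(0) \pmod{\Z R}$, whereas \ref{item:modWa} says $\mu_0-\lambda_0 \equiv \tld{w}_\mu^{-1}\tld{w}_\lambda(0) \pmod{\Z R}$. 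These are independent congruences, and the whole point of the lemma is to play them against each other. Relatedly, your equation $t_{\lambda_0}\tld{w}_\lambda = t_{\mu_0}\tld{w}_\mu\,\omega$ with $\omega\in\Omega$ is backwards: equality of $W_a$-cosets means the two elements differ by an element of $W_a$, and the coset only records $\lambda_0+\tld{w}_\lambda(0)$ modulo $\Z R$, which is far from enough to ``recover $\lambda_0$ by projecting to $\ovl{C}_0$.''

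The second problem is that the role of connectedness of $Z$ is arithmetic, not the geometric statement about $\Omega$ acting on $\ovl{C}_0$ that you propose. Subtracting the two congruences gives $(1-p\pi^{-1})\tld{w}_\mu^{-1}\tld{w}_\lambda(0)\in\Z R$, hence $(1-p^f)\tld{w}_\mu^{-1}\tld{w}_\lambda(0)\in\Z R$ where $f$ is the order of $\pi$; connectedness of $Z$ makes $X^*(T)/\Z R = X^*(Z)$ torsion-free, forcing $\tld{w}_\mu^{-1}\tld{w}_\lambda(0)\in\Z R$, i.e.\ $\tld{w}_\lambda W_a = \tld{w}_\mu W_a$. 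Only at this point do $\lambda_0$ and $\mu_0$ lie in the same $W_a$-orbit for the $p$-dot action, and since $\ovl{C}_0$ meets each such orbit exactly once, they coincide. Your substitute mechanism (``$\Omega$ acts freely on the relevant set of facets'') is not what is needed and would not close this gap.
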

\begin{proof}
\ref{item:uparrow} implies that 
\begin{equation*}\label{eqn:dotequal}
\pi^{-1}(\tld{w}\tld{w}_\lambda)\cdot \lambda_0 = \pi^{-1}(\tld{w}_\mu)\cdot \mu_0 
\end{equation*}
for some $\tld{w} \in W_a$ so that $\mu_0-\lambda_0 \equiv p\pi^{-1}(\tld{w}_\mu^{-1}\tld{w}_\lambda)(0) \pmod {\Z R}$. 
On the other hand, \ref{item:modWa} implies that $\mu_0-\lambda_0 \cong \tld{w}_\mu^{-1}\tld{w}_\lambda(0) \pmod {\Z R}$. 
We conclude that $(1-p\pi^{-1})\tld{w}_\mu^{-1}\tld{w}_\lambda(0) \in \Z R$. 
If $\pi$ has order $f$, then we conclude that 
\[
(1-p^f) \tld{w}_\mu^{-1}\tld{w}_\lambda(0) = (1+p\pi^{-1}+\ldots+p^{f-1}\pi^{-f+1})(1-p\pi^{-1})\tld{w}_\mu^{-1}\tld{w}_\lambda(0) \in \Z R. 
\]
As $Z$ is connected, $X^*(Z)=X^*(T)/\Z R$ is  $(1-p^f)$-torsion-free so that $\tld{w}_\mu^{-1}\tld{w}_\lambda(0) \in \Z R$ or in other words $\tld{w}_\lambda W_a = \tld{w}_\mu W_a$. 
Recall that the closure of $C_0$ is a fundamental domain for the $p$-dot action of $W_a$ on $X^*(T) \otimes_\Z \R$ in a strong sense: $\ovl{C}_0$ intersects any $W_a$-orbit exactly once (see \cite[Ch.~5, \S 3, Theorem 2]{bourbaki}). 
Then the fact that $\lambda_0,\mu_0$ are in $\ovl{C}_0$ implies that $\lambda_0 = \mu_0$. 

\ref{item:uparrow} implies that there is a sequence of affine reflections $\tld{r}_1,\ldots,\tld{r}_m$ such that 
\begin{itemize}
\item for each $1\leq k\leq m$, $\tld{r}_{k-1}\cdots\tld{r}_1 \pi^{-1}(\tld{w}_\lambda)\cdot \lambda_0$ (resp.~$\tld{r}_k\cdots\tld{r}_1 \pi^{-1}(\tld{w}_\lambda)\cdot \lambda_0$) is in the negative (resp.~positive) halfspace defined by $\tld{r}_k$; and
\item $\tld{r}_m\cdots\tld{r}_1 \pi^{-1}(\tld{w}_\lambda)\cdot \lambda_0 = \pi^{-1}(\tld{w}_\mu)\cdot \mu_0$. 
\end{itemize}
These properties hold after replacing $\lambda_0$ and $\mu_0$ with $F$. 
If $\lambda_0,\mu_0 \in C_0$, then $F = C_0$ and $\tld{w}_\lambda \uparrow \tld{w}_\mu$. 
\end{proof}

We say that $\mu \in X^*(T)$ is $p$-regular if $\mu$ is $0$-deep in its $p$-alcove. 
Recall from \cite[II.7.2]{RAGS} the notion of blocks for $G$.
By the linkage principle (\cite[II.2.12(1) and II.6.17]{RAGS}) if $L(\lambda)$ and $L(\mu)$ are in the same block, then $\lambda$ is $p$-regular if and only if $\mu$ is, in which case we say that the block is $p$-regular.
Given a $G$-module $V$, let $V_{\mathrm{reg}}$ be the projection of $V$ to the $p$-regular blocks. 

\begin{lemma}\label{lemma:translation}
Let $\mu \in X(T)^+$ be $0$-deep in its alcove, and suppose that $\nu \in X(T)^+$ such that $\mu+\kappa$ is in the closure of the $p$-alcove containing $\mu$ for all $\kappa \in \Conv(\nu)$. 
Then 
\[
(L(\mu) \otimes L(\nu))_{\mathrm{reg}} \cong \bigoplus_{\substack{\kappa \in \Conv(\nu) \\ \mu+\kappa \textrm{ is $0$-deep}}} L(\mu+\kappa)^{\oplus [L(\nu)|_T: \kappa]_T}
\]
(and each summand that appears on the RHS has highest weight $\mu+\kappa$ in the same $p$-alcove as $\mu$). 
\end{lemma}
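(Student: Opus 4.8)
The plan is to use the standard translation principle together with the good filtration/Weyl filtration theory for tensor products, in the spirit of \cite[II.7]{RAGS} and \cite[Lemma 2.2.2]{MLM}. First I would observe that since $\mu$ is $0$-deep (i.e. $p$-regular) in its $p$-alcove $C$ and $\mu+\kappa$ lies in the closure $\ovl{C}$ for every $\kappa\in\Conv(\nu)$, the weight $\mu+\kappa$ is $p$-regular precisely when it lies in $C$ itself, and all such weights are dominant (here I use that $\mu$ is dominant and $0$-deep, so no wall of the dominant chamber passes through $\ovl{C}$). Then the projection to the $p$-regular blocks kills exactly the constituents supported on walls, so on the level of characters we have $\mathrm{ch}(L(\mu)\otimes L(\nu))_{\mathrm{reg}} = \sum_{\kappa} [L(\nu)|_T:\kappa]\, e(\mu+\kappa)$ summed over $\kappa\in\Conv(\nu)$ with $\mu+\kappa$ $0$-deep, by the Weyl character formula argument: $\mathrm{ch}\,L(\mu)=\mathrm{ch}\,W(\mu)$ for $\mu$ far from walls (more precisely, $L(\mu)=W(\mu)$ as $\mu$ is $0$-deep hence generic enough that $W(\mu)$ is irreducible), and $\mathrm{ch}(W(\mu)\otimes L(\nu))=\sum_{\kappa}[L(\nu)|_T:\kappa]\,\mathrm{ch}\,W(\mu+\kappa)$, after which each $W(\mu+\kappa)$ with $\mu+\kappa\in\ovl{C}$ non-regular contributes $0$ to the regular projection while each $W(\mu+\kappa)$ with $\mu+\kappa\in C$ equals $L(\mu+\kappa)$.

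Next I would upgrade this character identity to a direct sum decomposition of $G$-modules. The key point is complete reducibility: every summand $L(\mu+\kappa)$ appearing on the right lies in the same $p$-regular block as $L(\mu)$ and in the same $p$-alcove $C$, and by the (strong) linkage principle any composition factor of $(L(\mu)\otimes L(\nu))_{\mathrm{reg}}$ is linked to $\mu$; the character computation shows every such factor is of the form $L(\mu+\kappa)$ with $\mu+\kappa\in C$. It then suffices to show there are no nontrivial extensions among these, i.e. that the module is semisimple. For this I would use that a tensor product $L(\mu)\otimes L(\nu)$ with $\mu$ sufficiently deep in an alcove has both a good filtration and a Weyl filtration (it is a tilting-type statement: $L(\mu)=W(\mu)=H^0(\mu)$ is simultaneously Weyl and dual Weyl because $\mu$ is $0$-deep, and $L(\nu)$ restricted suitably... ), hence so does the projection to any block; but a module with both a good and a Weyl filtration all of whose ``layers'' $W(\mu+\kappa)=H^0(\mu+\kappa)=L(\mu+\kappa)$ are irreducible must be semisimple. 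Alternatively, and perhaps more cleanly, I would cite \cite[Lemma 2.2.2]{MLM} or the analogous translation-functor statement from \cite[II.7.2]{RAGS}: translating the semisimple module $L(\mu)$ (regarded as living in a regular block, tensoring by $L(\nu)$, projecting) produces a semisimple module because the relevant translation functors are exact and send simples in $C$ to simples in $C$ when no wall-crossing occurs.

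The main obstacle I anticipate is the semisimplicity statement — getting from ``right character'' to ``right module'' — since a priori $(L(\mu)\otimes L(\nu))_{\mathrm{reg}}$ could have self-extensions or extensions between the various $L(\mu+\kappa)$. The cleanest route is to note that $L(\mu)\otimes L(\nu)$, after projecting to the principal-type regular block, is a direct summand of a tensor product of a tilting module with $L(\nu)$; more robustly, one uses that since $\mu$ is $0$-deep the weight $\mu$ lies in the interior of its alcove and $H^0(\mu)=L(\mu)=V(\mu)$, so $L(\mu)\otimes L(\nu)$ has a good filtration with sections $H^0(\mu+\kappa)$ (for $\kappa$ a weight of $L(\nu)$ with multiplicity $[L(\nu)|_T:\kappa]$, this being the standard fact that $H^0(\mu)\otimes L(\nu)$ has such a filtration) and dually a Weyl filtration with sections $V(\mu+\kappa)$; projecting to the regular blocks discards the sections with $\mu+\kappa\notin C$ and leaves sections $H^0(\mu+\kappa)=L(\mu+\kappa)$ for $\mu+\kappa\in C$, so the regular projection has both a good filtration and a Weyl filtration by irreducibles, forcing it to be semisimple of the asserted shape. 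The multiplicities then read off from $[L(\nu)|_T:\kappa]$, completing the proof.
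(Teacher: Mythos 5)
Your argument has a genuine gap at its foundation: you assert that $L(\mu)=W(\mu)$ (equivalently, that $H^0(\mu)$ is irreducible) ``as $\mu$ is $0$-deep hence generic enough that $W(\mu)$ is irreducible.'' This is false. The lemma allows $\mu$ to be $0$-deep in \emph{any} dominant $p$-alcove, not just the lowest one, and for $\mu$ in a higher alcove $W(\mu)$ is essentially never irreducible (already for $\SL_2$ and $\mu=p+1$ one has $[W(\mu)]=[L(p+1)]+[L(p-3)]$); indeed in the application (the proof of Theorem \ref{thm:main}) $\mu$ is a $p$-restricted weight in an arbitrary restricted alcove. Both halves of your proof collapse on this point: the character identity, because $\mathrm{ch}\,L(\mu)\neq\mathrm{ch}\,W(\mu)$, and the semisimplicity argument, because the good/Weyl filtration of $L(\mu)\otimes L(\nu)$ you invoke requires $L(\mu)=H^0(\mu)=V(\mu)$, and its ``layers'' $H^0(\mu+\kappa)$ are likewise not irreducible.

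The missing idea is the translation principle. One writes, via Brauer's formula, $\mathrm{ch}\,(W(\mu)\otimes L(\nu))_{\mathrm{reg}}=\sum_{\kappa}[L(\nu)|_T:\kappa]_T\,\mathrm{ch}\,W(\mu+\kappa)$, where the sum runs over $\kappa\in\Conv(\nu)$ with $\mu+\kappa$ $p$-regular (the non-regular $W(\mu+\kappa)$ die under the regular projection by linkage). One then uses that the decomposition numbers $a(\tld{w},\tld{w}')$ expressing $\mathrm{ch}\,W(\mu+\kappa)$ in terms of the $\mathrm{ch}\,L(\tld{w}'\cdot(\lambda+w^{-1}\kappa))$ are \emph{independent of $\kappa$} (translation principle, \cite[II.7.5]{RAGS}), with $a(\tld{w},\tld{w})=1$ and support in $\{\tld{w}'\uparrow\tld{w}\}$; an upward induction along $\uparrow$ then cancels all non-leading terms and yields the claimed character of $(L(\mu)\otimes L(\nu))_{\mathrm{reg}}$. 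For semisimplicity, the correct (and elementary) observation is that all the surviving constituents $L(\mu+\kappa)$ have highest weights lying in the interiors of $p$-alcoves, and two distinct such weights in the same open alcove are never in the same $W_p$-dot orbit (the closed alcove is a fundamental domain), so the linkage principle forbids extensions between distinct constituents, while self-extensions of simples vanish; no tilting-module input is needed or available here.
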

\begin{proof}
The proof is as in \cite[Lemma]{humphreys-generic}, except that we project to $p$-regular blocks. 
As the linkage principle ensures that there are no nontrivial $G$-extensions between the constituents of $(L(\mu) \otimes L(\nu))_{\mathrm{reg}}$, it suffices to prove an equality at the level of formal characters. 
We have 
\begin{align}
\mathrm{ch}\, (W(\mu) \otimes L(\nu))_{\mathrm{reg}} &= \sum_{\kappa \in X^*(T)} [L(\nu)|_T: \kappa]_T\, \mathrm{ch}\, W(\mu+\kappa)_{\mathrm{reg}}\nonumber \\
\label{eqn:reg} &= \sum_{\substack{\kappa \in \Conv(\nu) \\ \mu+\kappa \textrm{ is $0$-deep}}} [L(\nu)|_T: \kappa]_T\, \mathrm{ch}\, W(\mu+\kappa), 
\end{align}
where the first equality follows from a formula of Brauer and the second equality follows from the fact that $W(\mu+\kappa)_{\mathrm{reg}} = 0$ if $\mu+\kappa$ is not $p$-regular by the linkage principle and that $[L(\nu)|_T: \kappa]_T \neq 0$ implies that $\kappa \in \Conv(\nu)$. 
By assumption, the highest weight of each $W(\mu+\kappa)$ appearing in \eqref{eqn:reg} is $0$-deep in the same $p$-alcove as $\mu$. 
If $\mu = \tld{w}\cdot \lambda$ for $\tld{w} \in W_a$ and $\lambda\in C_0$ and $\mu+\kappa$ is $0$-deep in the same $p$-alcove as $\mu$, then the linkage principle implies that there are nonnegative integers $a(\tld{w},\tld{w}')$ for each $\tld{w}'\in W_a$ such that 
\begin{equation}\label{eqn:lusztig}
\mathrm{ch}\, W(\mu+\kappa) = \sum_{\tld{w}'\in W_a} a(\tld{w},\tld{w}')\, \mathrm{ch}\, L(\tld{w}'\cdot (\lambda+w^{-1}(\kappa)))
\end{equation}
where $w \in W$ is the image of $\tld{w}$, the integers $a(\tld{w},\tld{w}')$ are independent of $\mu$ and $\kappa$ by the translation principle \cite[II.7.5]{RAGS}, $a(\tld{w},\tld{w}) = 1$, and $a(\tld{w},\tld{w}') \neq 0$ implies that $\tld{w}' \uparrow \tld{w}$. 
Then \eqref{eqn:reg}, \eqref{eqn:lusztig}, and induction using the partial ordering $\uparrow$ yields 
\[
\mathrm{ch}\, (L(\mu) \otimes L(\nu))_{\mathrm{reg}} = \sum_{\substack{\kappa \in \Conv(\nu) \\ \mu+\kappa \textrm{ is $0$-deep}}} [L(\nu)|_T: \kappa]_T\, \mathrm{ch}\, L(\mu+\kappa).
\]
\end{proof}

We have the following immediate corollary of Lemma \ref{lemma:translation}. 

\begin{cor}\label{cor:translation}
Let $\mu \in X(T)^+$ be $0$-deep in its alcove, and suppose that $\nu \in X(T)^+$ such that $\mu+\kappa$ is in closure of the $p$-alcove containing $\mu$ for all $\kappa \in \Conv(\nu)$. If $\lambda \in X(T)^+$ is $0$-deep in its $p$-alcove and $[L(\mu) \otimes L(\nu):L(\lambda)]_G \neq 0$, then $\lambda = \mu+ \kappa$ for some $\kappa \in \Conv(\nu)$ and $\lambda$ and $\mu$ are in the same $p$-alcove. 
\end{cor}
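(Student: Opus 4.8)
The plan is to deduce this directly from Lemma \ref{lemma:translation}. The key observation is that ``$0$-deep in its $p$-alcove'' is exactly the condition that a weight be $p$-regular, so the hypothesis on $\lambda$ says precisely that $L(\lambda)$ lies in a $p$-regular block of $G$. Consequently the composition multiplicity of $L(\lambda)$ in $L(\mu)\otimes L(\nu)$ is unaffected by projecting to the $p$-regular blocks, i.e.\ $[L(\mu)\otimes L(\nu):L(\lambda)]_G = [(L(\mu)\otimes L(\nu))_{\mathrm{reg}}:L(\lambda)]_G$.

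Next I would invoke Lemma \ref{lemma:translation}, whose hypotheses ($\mu$ dominant and $0$-deep in its alcove, $\nu$ dominant, and $\mu+\kappa$ in the closure of the $p$-alcove of $\mu$ for every $\kappa\in\Conv(\nu)$) are verbatim those assumed here. It identifies $(L(\mu)\otimes L(\nu))_{\mathrm{reg}}$ with $\bigoplus L(\mu+\kappa)^{\oplus[L(\nu)|_T:\kappa]_T}$, the sum ranging over $\kappa\in\Conv(\nu)$ with $\mu+\kappa$ $0$-deep, and moreover records that each such $\mu+\kappa$ lies in the same $p$-alcove as $\mu$. Since $[L(\mu)\otimes L(\nu):L(\lambda)]_G\neq 0$, the module $L(\lambda)$ must appear as one of these summands, which forces $\lambda=\mu+\kappa$ for some $\kappa\in\Conv(\nu)$ (necessarily with $\mu+\kappa$ $0$-deep), and then $\lambda$ and $\mu$ lie in the same $p$-alcove.

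As the statement is a formal consequence of the preceding lemma there is no genuine obstacle; the only points requiring a moment's care are matching the ``$0$-deep'' and ``$p$-regular'' terminology (so that the projection to regular blocks does not lose $L(\lambda)$) and noting that the combinatorial hypothesis on $\nu$ is inherited without change.
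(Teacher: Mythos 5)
Your argument is correct and is exactly the intended one: the paper states this as an ``immediate corollary'' of Lemma \ref{lemma:translation}, and your reasoning (projecting to $p$-regular blocks, which loses nothing since $\lambda$ is $0$-deep, then reading off the summands from the lemma) is precisely the routine verification that the paper leaves implicit.
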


\section{Ingredients}
\label{sec:ing}

In this section, we summarize key results that we will use to investigate reductions of Deligne--Lusztig representations. 
We assume from now on that $p\geq 2h_\eta$.
Given $\lambda\in X_1(T)$ we let $\widehat{Q}_1(\lambda)$ be the $G$-representation constructed in \cite[II.11.11]{RAGS} and $Q_\lambda \defeq \widehat{Q}_1(\lambda)|_\Gamma$. 
Then $Q_\lambda$ is a projective $\F[\Gamma]$-module (\cite[\S~4.5]{Jantzen-FK}, see also \cite[Theorem 10.4]{HumphreysBook}). 
Let $P_\lambda$ denote a $\F[\Gamma]$-projective cover of $F(\lambda)\defeq L(\lambda)|_\Gamma$. 
We first record a result of Chastkofsky and Jantzen (see \cite[Theorem 1]{chastkofsky} and \cite[Corollar 2]{jantzen} and also \cite[Appendix A]{herzig-duke} for the generalization to reductive groups with simply connected derived subgroup). 

\begin{prop}\label{prop:packet}
For $\lambda\in X_1(T)$, we have
\[
Q_\lambda \cong \bigoplus_{\lambda'\in X_1(T)/(p-\pi)X^0(T)} \bigoplus_{\nu \in X(T)^+} P_{\lambda'}^{\oplus[L(\lambda') \otimes L(\pi(\nu)):L(\lambda+p\nu)]_G}
\]
Moreover, $[L(\lambda) \otimes L(\pi(\nu)):L(\lambda+p\nu)]_G$ is $1$ if $\nu = 0$ and is $0$ otherwise. 
\end{prop}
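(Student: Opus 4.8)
The first assertion is a direct citation: the decomposition of $Q_\lambda$ into indecomposable projective $\F[\Gamma]$-modules $P_{\lambda'}$ is exactly the content of \cite[Theorem 1]{chastkofsky} combined with \cite[Corollar 2]{jantzen}, with the reductive-group generalization in \cite[Appendix A]{herzig-duke}; the plan is simply to recall this statement, noting that the outer sum ranges over a set of representatives $\lambda'$ for $X_1(T)/(p-\pi)X^0(T)$ since twisting $\widehat{Q}_1(\lambda)$ by a character in $X^0(T)$ has a controlled effect on the restriction to $\Gamma$. So the only thing requiring an argument is the ``moreover'' clause, i.e.~that $[L(\lambda)\otimes L(\pi(\nu)):L(\lambda+p\nu)]_G$ equals $1$ for $\nu=0$ and $0$ otherwise.

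For $\nu = 0$ the claim is trivial: $L(\pi(0)) = L(0)$ is the trivial module, so $L(\lambda)\otimes L(0)\cong L(\lambda)$ and the multiplicity of $L(\lambda)$ is $1$. For $\nu\neq 0$ I would argue as follows. Write $\nu\in X(T)^+$, $\nu\neq 0$, and suppose $[L(\lambda)\otimes L(\pi(\nu)):L(\lambda+p\nu)]_G\neq 0$. Apply Lemma \ref{lemma:packet}\ref{lemma:packet:it:1} with $\lambda' = \lambda$ (writing $\lambda = \tld{w}_\lambda\cdot\lambda_0$ with $\lambda_0\in \ovl{C}_0$ and $\tld{w}_\lambda\in\tld{W}_1$, using that $\lambda\in X_1(T)$): this yields $\lambda+p\nu\uparrow \lambda+\pi(\nu')$ for some $\nu'\in\Conv(\nu)$. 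Pairing this $\uparrow$ relation with a dominant root $\alpha_0$ with $\langle\nu,\alpha_0^\vee\rangle = h_\nu$ gives, as in the proof of Lemma \ref{lemma:packet}\ref{lemma:packet:it:2}, an inequality forcing $(p-1)h_\nu$ to be bounded above by a quantity of size roughly $h_\eta < p-1$; more precisely, $\lambda + p\nu \uparrow \lambda + \pi(\nu')$ forces $p\nu \geq \pi(\nu')$ in the sense of the root-lattice ordering when paired against dominant coweights (since $\uparrow$ refines $\leq$), yet also $\lambda + p\nu$ must be $\leq$ something bounded in terms of $\lambda$ and $\Conv(\nu)$, and combining these squeezes $h_\nu$ below $p/(p-1)$ times a constant $< p-1$. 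Then $h_\nu < p$ and the integrality $h_\nu = \lceil\frac{p-1}{p}h_\nu\rceil$ trick from Lemma \ref{lemma:packet} pins down $h_\nu\leq h_\eta$.

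Having bounded $h_\nu$, I would conclude by the linkage/translation machinery: since $\nu\neq 0$ but $\nu$ is small (all weights of $L(\pi(\nu))$ lie in $\Conv(\pi(\nu))$, which has $h$-bound at most $h_\eta < p$), the hypotheses of Corollary \ref{cor:translation} are met after checking that $\lambda+\kappa$ stays in the closure of the $p$-alcove of $\lambda$ for $\kappa\in\Conv(\pi(\nu))$—this uses $p$-restrictedness of $\lambda$ together with $h_\eta$-smallness and $p\geq 2h_\eta$. Corollary \ref{cor:translation} then says that any composition factor $L(\lambda+\kappa)$ of $L(\lambda)\otimes L(\pi(\nu))$ which is $0$-deep satisfies that $\lambda+\kappa$ lies in the same $p$-alcove as $\lambda$. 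But $\lambda+p\nu$ differs from $\lambda$ by $p\nu$, which (for $\nu\neq 0$) moves $\lambda$ out of its $p$-alcove entirely—it crosses at least one wall $\langle\,\cdot\,+\eta,\alpha^\vee\rangle\in p\Z$—contradiction. Hence the multiplicity is $0$.

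\textbf{Main obstacle.} The delicate point is the alcove-geometry bookkeeping: verifying that $\lambda + \Conv(\pi(\nu))$ stays inside the closed $p$-alcove of $\lambda$ so that Lemma \ref{lemma:translation}/Corollary \ref{cor:translation} apply, and simultaneously that $p\nu$ genuinely exits that alcove. These two facts are compatible precisely because $\Conv(\pi(\nu))$ has diameter $O(h_\eta) < p$ while $p\nu$ has ``size'' a multiple of $p$; but making the walls-crossed argument airtight (rather than appealing to size heuristics) is where care is needed, and is exactly the kind of estimate Lemma \ref{lemma:packet} was set up to handle. I expect the cleanest route is to run everything through Lemma \ref{lemma:packet} directly, never invoking Corollary \ref{cor:translation}, so that the bound $h_\nu\leq h_\eta$ together with the $\uparrow$-relation $\lambda+p\nu\uparrow\lambda+\pi(\nu')$ is contradicted by comparing the two sides modulo $p$ in $X^*(T)/\Z R$ combined with the fact that $\lambda$ is already in a restricted alcove.
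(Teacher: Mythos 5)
The paper treats Proposition \ref{prop:packet} in its entirety, including the ``moreover'' clause, as a citation of Chastkofsky--Jantzen (no proof is given), so the only part of your proposal that needs scrutiny is your argument for the ``moreover'' clause, and that argument has a genuine gap. Your endgame is a wall-crossing contradiction: you claim that for $\nu\neq 0$ the translation by $p\nu$ ``crosses at least one wall $\langle\,\cdot\,+\eta,\alpha^\vee\rangle\in p\Z$.'' This is false when $0\neq\nu\in X^0(T)$ (e.g.\ $\nu$ a power of the determinant character for $\GL_n$): then $\langle p\nu,\alpha^\vee\rangle=0$ for every root $\alpha$, so $\lambda+p\nu$ lies in the same $p$-alcove as $\lambda$ and no contradiction arises. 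This is not a degenerate corner case --- it is exactly the reason the outer sum in the Proposition is indexed by $X_1(T)/(p-\pi)X^0(T)$ rather than by $X_1(T)$, and it must be closed by a separate observation: for $\nu\in X^0(T)$ one has $L(\lambda)\otimes L(\pi(\nu))\cong L(\lambda+\pi(\nu))$, which is $L(\lambda+p\nu)$ only if $(p-\pi)\nu=0$, i.e.\ $\nu=0$ since $p-\pi$ is injective on $X^0(T)$. A secondary problem is that the tools you route through presuppose regularity/depth hypotheses absent from the Proposition: Lemma \ref{lemma:packet} requires $\lambda_0\in C_0$ (open alcove, so $\lambda$ $p$-regular), and Corollary \ref{cor:translation} requires $0$-deepness and that $\lambda+\Conv(\pi(\nu))$ stay in the closed alcove of $\lambda$, which can fail for $\lambda$ near a wall of its restricted alcove; the Proposition is asserted for arbitrary $\lambda\in X_1(T)$.

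The intended argument is much shorter and avoids all of this. Every weight of $L(\lambda)\otimes L(\pi(\nu))$ is $\leq\lambda+\pi(\nu)$, so a nonzero multiplicity forces $p\nu\leq\pi(\nu)$. Pairing with a dominant coroot $\alpha_0^\vee$ achieving $\langle\nu,\alpha_0^\vee\rangle=h_\nu$ gives $ph_\nu\leq\langle\pi(\nu),\alpha_0^\vee\rangle=\langle\nu,\pi^{-1}(\alpha_0)^\vee\rangle\leq h_\nu$, whence $h_\nu=0$ (as $\nu$ is dominant), i.e.\ $\nu\in X^0(T)$; the one-dimensional case above then finishes. In particular the bound $h_\nu\leq h_\eta$ you extract from Lemma \ref{lemma:packet}\ref{lemma:packet:it:2} is far weaker than what is needed and cannot by itself rule out $\nu\neq 0$.
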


Given $\lambda\in X^*(T)$, let $\widehat{Z}(1,\lambda)$ be the baby Verma $G_1T$-module of highest weight $\lambda$ as defined in \cite[\S 2.5]{jantzen} and write $\widehat{L}(1,\lambda)$ for its irreducible cosocle. 
Following \cite[\S 3.1]{jantzen}, given $(s,\mu)\in W\times X^*(T)$ we let $T_s$ be the $F$-stable maximal torus $g_s T g_s^{-1}$ where $g_s\in G(\ovl{\F}_p)$ is any element such that $g_s^{-1}F(g_s)\in N_G(T)$ is a lift of $s$ and $\theta(s,\mu): \Gamma\cap T_s\ra E^\times$ be a character defined by $\theta(s,\mu)(t)\defeq [\mu(g_s^{-1}t g_s)]$ where $[\cdot]$ denotes the Teichm\"uller lift.
This data gives rise to the (signed) Deligne--Lusztig induction $R_s(\mu)\defeq \eps_G \eps_{T_s}R_{T_s}^{\theta(s,\mu)}$; this is the (occasionally virtual) $\Gamma$-representation denoted $R_s(1,\mu)$ in \cite[\S 3.1]{jantzen}. 
We implicitly assume $R_s(\mu)$ is defined over $E$ and write $\ovl{R}_s(\mu)$ for the semisimplification of the reduction of a $\Gamma$-stable $\cO$-lattice in $R_s(\mu)$. 
The following is a convenient reformulation of \cite[3.2]{jantzen} which describes the decomposition of $Q_\lambda$ into reductions of Deligne--Lusztig representations. 

\begin{thm}\label{thm:jantzen}
Let $\mu\in X^*(T)$, $s\in W$, and $\lambda \in X_1(T)$. 
Then 
\begin{equation*}\label{eqn:packet}
\dim \Hom_\Gamma(Q_\lambda,\ovl{R}_s(\mu)) = \sum_{\nu\in X^*(T)} [\widehat{Z}(1,\mu+(s\pi-p)\nu+(p-1)\eta):\widehat{L}(1,\lambda)]_{G_1T} 
\end{equation*}
where the left hand side is suitably interpreted for virtual representations $R_s(\mu)$. 

Moreover, $[\widehat{Z}(1,\mu+(s\pi-p)\nu+(p-1)\eta):\widehat{L}(1,\lambda)]_{G_1T} \neq 0$ if and only if there exists $\tld{w} \in \tld{W}$ such that $\tld{w}^{-1}(0) = \nu$, $\tld{w} \cdot (\mu-\eta+s\pi(\tld{w}^{-1}(0)))+\eta$ is dominant, and 
\begin{equation*}\label{eqn:frobkernel}
\tld{w} \cdot (\mu-\eta+s\pi(\tld{w}^{-1}(0))) \uparrow \tld{w}_h\cdot \lambda. 
\end{equation*}
\end{thm}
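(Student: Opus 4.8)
The plan is to recognize both assertions as repackagings of results in \cite{jantzen} (together with the structure theory of $G_1T$-modules from \cite[II.9]{RAGS}) and to make the dictionary explicit.

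For the character formula, I would start from the fact that $Q_\lambda=\widehat{Q}_1(\lambda)|_\Gamma$ is a projective $\F[\Gamma]$-module, so that $\dim\Hom_\Gamma(Q_\lambda,\ovl{R}_s(\mu))$ is additive in the (possibly virtual) representation $\ovl{R}_s(\mu)$ and, by Proposition \ref{prop:packet}, is a sum of the multiplicities $[\ovl{R}_s(\mu):F(\lambda')]$ weighted by the numbers $[L(\lambda')\otimes L(\pi(\nu)):L(\lambda+p\nu)]_G$. The key input is Jantzen's character formula \cite[3.2]{jantzen}, which computes exactly this weighted sum and re-expresses it as a sum, over a set of representatives naturally in bijection with $X^*(T)$, of composition multiplicities $[\widehat{Z}(1,-):\widehat{L}(1,\lambda)]_{G_1T}$ of baby Verma $G_1T$-modules; the passage from $\F[\Gamma]$-modules to $G_1T$-modules is the one furnished by the baby Verma filtration of $\widehat{Q}_1(\lambda)$ and $G_1T$-reciprocity. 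It then remains to unwind notation: the definitions of $\theta(s,\mu)$ and of the Teichm\"uller lift show that the Frobenius automorphism enters precisely through $s\pi$, while the Steinberg-type normalization relating $\widehat{Q}_1(\lambda)|_\Gamma$ to $G_1T$-modules is responsible for the twist by $(p-1)\eta$ and for the coefficient $-p$ of $\nu$ inside the baby Verma. Comparing indexing sets term by term (with $\nu$ running over $X^*(T)$) yields the displayed identity.

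For the ``moreover'' part, I would analyze the nonvanishing of $[\widehat{Z}(1,\mu'):\widehat{L}(1,\lambda)]_{G_1T}$ where $\mu'=\mu+(s\pi-p)\nu+(p-1)\eta$ and $\lambda\in X_1(T)$. By the linkage and strong linkage principles for $G_1T$-modules (\cite[II.9]{RAGS}), this multiplicity is nonzero if and only if $\lambda$ and $\mu'$ lie in the same $W_a$-orbit for the $p$-dot action (the ambient $X^0(T)$-twist being harmless) and $\widehat{L}(1,\lambda)$ occurs $\uparrow$-below $\widehat{L}(1,\mu')$ in the corresponding $G_1T$-block; the forward direction is strong linkage, and the converse uses the known description of the composition factors of a baby Verma module inside a block, for which occurrence is equivalent to $\uparrow$-comparability. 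It remains to recast these conditions in the stated form. Writing $\tld{w}=t_{-w\nu}w$, the general element of $\tld{W}$ with $\tld{w}^{-1}(0)=\nu$, a direct computation gives $\tld{w}\cdot(\mu-\eta+s\pi(\tld{w}^{-1}(0)))+\eta=w\bigl(\mu+(s\pi-p)\nu\bigr)=w\bigl(\mu'-(p-1)\eta\bigr)$, so that the freedom in the Weyl component $w$ is precisely the freedom to move $\mu'-(p-1)\eta$ into the dominant chamber, and the dominance hypothesis in the theorem selects the relevant $\tld{w}$. The element $\tld{w}_h=w_0t_{-\eta}$ on the right of the $\uparrow$-relation comes from the duality between the baby Verma $\widehat{Z}(1,\mu')$ and the corresponding dual (co-standard) $G_1T$-module, whose socle is $\widehat{L}(1,\lambda)$ exactly when $\lambda$ is $p$-restricted; on weights this duality is implemented by the ``flip'' $\tld{w}_h$ carrying the restricted region to the top region, and under it the linkage/$\uparrow$ condition becomes exactly $\tld{w}\cdot(\mu-\eta+s\pi(\tld{w}^{-1}(0)))\uparrow\tld{w}_h\cdot\lambda$.

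I expect the main obstacle to be the purely combinatorial bookkeeping underlying these two steps: tracking the shift by $(p-1)\eta$ and the folding by $\tld{w}_h$ precisely, checking that the $\nu$-indexing produced above coincides with Jantzen's, and -- most delicately -- verifying the converse (``if'') direction of the baby Verma nonvanishing without any largeness assumption on $p$ beyond $p\geq 2h_\eta$, in particular when $\mu'$ and $\lambda$ are $p$-singular; here one reduces, after projecting to a block, to the statement that within a $G_1T$-block every $\uparrow$-comparable simple occurs as a composition factor of the relevant baby Verma. Once these points are in place the theorem follows from \cite{jantzen} and \cite[II.9]{RAGS}.
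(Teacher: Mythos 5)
Your treatment of the displayed character identity is essentially the paper's: the proof there consists of citing \cite[3.2, (3) and (4)]{jantzen} and then twisting by the $G_1$-trivial character $p\nu$ to convert $[\widehat{Z}(1,\mu+s\pi\nu+(p-1)\eta):\widehat{L}(1,p\nu+\lambda)]_{G_1T}$ into $[\widehat{Z}(1,\mu+(s\pi-p)\nu+(p-1)\eta):\widehat{L}(1,\lambda)]_{G_1T}$. Your extra narrative about baby Verma filtrations and reciprocity is just an account of how Jantzen proves his formula; no issue there.

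The gap is in the ``moreover'' part. You propose to establish the ``if'' direction by reducing to the claim that, within a $G_1T$-block, $\widehat{L}(1,\lambda)$ occurs in $\widehat{Z}(1,\mu')$ whenever the two weights are $\uparrow$-comparable. That claim is false as stated: a block contains infinitely many weights $\uparrow$-below $\mu'$ while $\widehat{Z}(1,\mu')$ is finite-dimensional, and even after restricting to $p$-restricted $\lambda$ a single $\uparrow$-comparison is strictly weaker than the true criterion. The paper instead imports \cite[Lemma 10.1.5]{GHS}: the multiplicity is nonzero if and only if $\sigma \cdot (\mu-\eta+(s\pi-p)\nu) \uparrow w_0\cdot(\lambda-p\eta)=\tld{w}_h\cdot\lambda$ for \emph{every} $\sigma\in W$. (The underlying reason the whole $W$-orbit must enter is \cite[II.9.16(5)]{RAGS}: the multiplicities in $\widehat{Z}(1,\mu')$ depend only on the $W$-orbit of $\mu'-(p-1)\eta$.) Since $u\cdot x\uparrow x$ for all $u\in W$ when $x+\eta$ is dominant, this conjunction collapses to the single condition for the $W$-conjugate that is dominant after adding $\eta$ --- which is exactly why the theorem requires \emph{both} that $\tld{w}\cdot(\mu-\eta+s\pi(\tld{w}^{-1}(0)))+\eta$ be dominant \emph{and} that it be $\uparrow\tld{w}_h\cdot\lambda$. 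The dominance hypothesis is therefore not, as you suggest, mere bookkeeping that ``selects the relevant $\tld{w}$''; it is an essential strengthening of the $\uparrow$-condition, and note also that the inequality runs from the (dominant conjugate of the) baby Verma weight up to $\tld{w}_h\cdot\lambda$, not from $\lambda$ up to $\mu'$ as in naive strong linkage. Your reindexing computation $\tld{w}\cdot(\mu-\eta+s\pi(\tld{w}^{-1}(0)))+\eta = w(\mu'-(p-1)\eta)$ is correct, so once you replace your block-theoretic claim with the precise criterion of \cite[Lemma 10.1.5]{GHS} the argument closes up exactly as in the paper.
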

\begin{proof}
\cite[3.2, (3) and (4)]{jantzen} give that 
\begin{align*}
\dim \Hom_\Gamma(Q_\lambda,\ovl{R}_s(\mu)) &= \sum_{\nu \in X^*(T)} [\widehat{Z}(1,\mu+s\pi\nu+(p-1)\eta):\widehat{L}(1,p\nu+\lambda)]_{G_1T} \\
&= \sum_{\nu \in X^*(T)} [\widehat{Z}(1,\mu+(s\pi-p)\nu+(p-1)\eta):\widehat{L}(1,\lambda)]_{G_1T}. 
\end{align*}

By \cite[Lemma 10.1.5]{GHS}, 
\begin{equation*}\label{eqn:frobker}
[\widehat{Z}(1,\mu+(s\pi-p)\nu+(p-1)\eta):\widehat{L}(1,\lambda)]_{G_1T} \neq 0 
\end{equation*}
if and only if $\sigma \cdot (\mu-\eta+(s\pi-p)\nu) \uparrow w_0 \cdot (\lambda - p\eta)$ for all $\sigma \in W$. 
This is equivalent to $\tld{w}\cdot (\mu-\eta+s\pi(\tld{w}^{-1}(0))) \uparrow \tld{w}_h \cdot \lambda$ where $\tld{w} = wt_{-\nu}$ and $w\in W$ is any element such that $w \cdot (\mu-\eta+(s\pi-p)\nu)+\eta$ is dominant. 
\end{proof}

The following is an immediate corollary of Proposition \ref{prop:packet} and Theorem \ref{thm:jantzen}. 

\begin{cor}\label{cor:jantzen}
Suppose that $\mu \in X^*(T)$, $s\in W$, and $\lambda \in X_1(T)$. 
Then 
\begin{align}
\dim \Hom_\Gamma(Q_\lambda,\ovl{R}_s(\mu)) =&\sum_{\substack{\lambda'\in X_1(T)/(p-\pi)X^0(T) \\ \nu\in X(T)^+}} [\ovl{R}_s(\mu):F(\lambda')]_\Gamma[L(\lambda') \otimes L(\pi(\nu)):L(\lambda+p\nu)]_G 
\label{eq:mults}\\
=& \sum_{\nu\in X^*(T)} [\widehat{Z}(1,\mu+(s\pi-p)\nu+(p-1)\eta):\widehat{L}(1,\lambda)]_{G_1T}. 
\nonumber
\end{align}
\end{cor}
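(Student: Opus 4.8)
The plan is to substitute the explicit direct-sum decomposition of $Q_\lambda$ from Proposition~\ref{prop:packet} into $\dim_\F\Hom_\Gamma(Q_\lambda,\ovl{R}_s(\mu))$ to obtain the first equality, and then to recognize the equality of the resulting expression with the Frobenius-kernel sum as nothing more than a restatement of Theorem~\ref{thm:jantzen}.

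First I would record the general fact that, for $\lambda'\in X_1(T)$, the functor $\Hom_\Gamma(P_{\lambda'},-)$ is exact (as $P_{\lambda'}$ is projective) and satisfies $\dim_\F\Hom_\Gamma(P_{\lambda'},F(\lambda''))=\delta_{\lambda',\lambda''}$ for $\lambda',\lambda''$ running over a fixed set of representatives of $X_1(T)/(p-\pi)X^0(T)$: any morphism $P_{\lambda'}\to F(\lambda'')$ factors through the cosocle $F(\lambda')$ of $P_{\lambda'}$, and $\End_\Gamma F(\lambda')=\F$ because $F(\lambda')$ is absolutely irreducible over $\F$ (recall $\F$ is chosen so that $G_0\otimes_{\F_p}\F$ is split; enlarging $\F$ if necessary we may further assume it is a splitting field for $\F[\Gamma]$, which changes none of the relevant multiplicities). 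Hence $\dim_\F\Hom_\Gamma(P_{\lambda'},M)=[M:F(\lambda')]_\Gamma$ for every finite-length $\F[\Gamma]$-module $M$, and by $\Z$-linearity also for virtual $M$, consistently with how $\ovl{R}_s(\mu)$ is interpreted when $R_s(\mu)$ is virtual. Applying $\Hom_\Gamma(-,\ovl{R}_s(\mu))$ to the decomposition in Proposition~\ref{prop:packet}, and using that the $F(\lambda')$ in question are pairwise non-isomorphic, then reproduces the right-hand side of \eqref{eq:mults}; this gives the first equality.

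The second equality is simply Theorem~\ref{thm:jantzen} applied to the same triple $(\mu,s,\lambda)$, which already identifies $\dim_\F\Hom_\Gamma(Q_\lambda,\ovl{R}_s(\mu))$ with $\sum_{\nu\in X^*(T)}[\widehat{Z}(1,\mu+(s\pi-p)\nu+(p-1)\eta):\widehat{L}(1,\lambda)]_{G_1T}$, so combining the two displays finishes the proof. I do not expect any genuine obstacle here, this being essentially a formal consequence of the two cited results: the only points deserving a word of care are that $\F$ is a splitting field for $\Gamma$ (so the multiplicity count is exact rather than off by a factor $[\End_\Gamma F(\lambda'):\F]$), that the $F(\lambda')$ indexed by $X_1(T)/(p-\pi)X^0(T)$ are pairwise non-isomorphic and exhaust the irreducible $\F[\Gamma]$-modules, and that all identities are read $\Z$-linearly in the virtual case.
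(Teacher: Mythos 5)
Your argument is correct and is exactly what the paper intends: the corollary is stated as an immediate consequence of Proposition \ref{prop:packet} (substituted into $\Hom_\Gamma(-,\ovl{R}_s(\mu))$ via the standard identity $\dim\Hom_\Gamma(P_{\lambda'},M)=[M:F(\lambda')]_\Gamma$) combined with Theorem \ref{thm:jantzen}, and the paper gives no further proof. Your added care about $\F$ being a splitting field and about $\Z$-linearity in the virtual case is reasonable bookkeeping but does not change the argument.
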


\section{Generic decompositions of Deligne--Lusztig representations}

In this section, we prove our main result on the reductions of generic Deligne--Lusztig representations. 
We begin with a corollary of the results from the last section. 

\begin{cor}\label{cor:jantzendeep}
Let $\mu - \eta \in C_0$ and $\lambda \in X_1(T)$. 
Suppose that $\mu-\eta$ or $\lambda$ is $h_\eta$-deep in its $p$-alcove. 
Then $\dim \Hom_\Gamma(Q_\lambda,\ovl{R}_s(\mu))\neq 0$ if and only if there exist $\tld{w}\in \tld{W}^+$ and $\tld{w}_\lambda \in \tld{W}_1$ such that $\tld{w} \uparrow \tld{w}_h \tld{w}_\lambda$ and $\lambda = \tld{w}_\lambda \cdot (\mu - \eta + s\pi(\tld{w}^{-1}(0)))$. 
In fact, $\dim \Hom_\Gamma(Q_\lambda,\ovl{R}_s(\mu))$ equals 
\begin{equation}\label{eqn:Qmult}
\sum_{\substack{\tld{w}\in \tld{W}^+,\tld{w}_\lambda \in \tld{W}_1 \\ \tld{w} \uparrow \tld{w}_h \tld{w}_\lambda \\ \lambda = \tld{w}_\lambda \cdot (\mu - \eta + s\pi(\tld{w}^{-1}(0)))}} [\widehat{Z}(1,\mu+(s\pi-p)(\tld{w}^{-1}(0))+(p-1)\eta):\widehat{L}(1,\lambda)]_{G_1T}, 
\end{equation}
where every term in \eqref{eqn:Qmult} is nonzero and for each $\tld{w}$ that appears in the sum, $\mu-\eta+s\pi(\tld{w}^{-1}(0))$ is in $C_0$. 
If the center $Z$ of $G$ is connected, then there is only one term in \eqref{eqn:Qmult}. 
\end{cor}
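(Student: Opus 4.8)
\emph{Strategy.} By Theorem~\ref{thm:jantzen} (equivalently the second line of Corollary~\ref{cor:jantzen}), $\dim\Hom_\Gamma(Q_\lambda,\ovl{R}_s(\mu))=\sum_{\nu\in X^*(T)}[\widehat{Z}(1,\mu+(s\pi-p)\nu+(p-1)\eta):\widehat{L}(1,\lambda)]_{G_1T}$, and the $\nu$-term is nonzero exactly when there is $\tld{w}\in\tld{W}$ with $\tld{w}^{-1}(0)=\nu$, with $\tld{w}\cdot(\mu-\eta+s\pi\nu)+\eta$ dominant, and with $\tld{w}\cdot(\mu-\eta+s\pi\nu)\uparrow\tld{w}_h\cdot\lambda$. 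First I would record two structural observations: for each $\nu$ there is a \emph{unique} element of $Wt_{-\nu}$ lying in $\tld{W}^+$ (an open alcove lies in a single Weyl chamber), so $\nu$ and such a $\tld{w}$ carry the same information; and if $\mu-\eta+s\pi\nu\in C_0$ then every $\tld{w}\in\tld{W}^+$ automatically makes $\tld{w}\cdot(\mu-\eta+s\pi\nu)+\eta$ dominant, since $\tld{w}\cdot\kappa+\eta=p\,\tld{w}\!\left(\tfrac1p(\kappa+\eta)\right)$ and $\tfrac1p(\kappa+\eta)\in A_0$ when $\kappa\in C_0$. The plan is then: (1) determine which $\nu$ can contribute; (2) rewrite the sum in the form \eqref{eqn:Qmult}; (3) treat the connected-center case.

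\emph{Step 1: the contributing $\nu$.} Since $\uparrow$ implies $\le$, any witnessing $\tld{w}$ satisfies the hypotheses of Lemma~\ref{lemma:apriori}; as $h_\eta\ge 1$, being $h_\eta$-deep implies $1$-deep, so Lemma~\ref{lemma:apriori} gives $h_\nu=h_{\tld{w}(0)}=h_{\tld{w}^{-1}(0)}\le h_\eta$, hence $h_{s\pi\nu}=h_\nu\le h_\eta$ as $s\pi$ permutes $R$. I claim $\mu-\eta+s\pi\nu\in C_0$. If $\mu-\eta$ is $h_\eta$-deep this is immediate, since $\langle\mu,\alpha^\vee\rangle\in(h_\eta,p-h_\eta)$ and $|\langle s\pi\nu,\alpha^\vee\rangle|\le h_\eta$ force $0<\langle\mu+s\pi\nu,\alpha^\vee\rangle<p$ for all $\alpha\in R^+$. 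If instead $\lambda$ is $h_\eta$-deep, then $\tld{w}_h\cdot\lambda$ is $h_\eta$-deep, hence $p$-regular; as $\tld{w}\cdot(\mu-\eta+s\pi\nu)\uparrow\tld{w}_h\cdot\lambda$ lies in the same $W_a$-($p$-dot)orbit, it too is $p$-regular, and therefore so is $\mu-\eta+s\pi\nu$ (the $p$-dot action of $\tld W$ permutes $p$-alcoves). Then, combining $p$-regularity with $h_{s\pi\nu}\le h_\eta\le p/2$, with $\mu-\eta\in C_0$, and with the dominance of $\tld{w}\cdot(\mu-\eta+s\pi\nu)+\eta$, an alcove-geometry argument (using that $\tld{w}_h\cdot\lambda$ is $h_\eta$-deep and $h_{\tld{w}^{-1}(0)}\le h_\eta$) pins $\mu-\eta+s\pi\nu$ into the alcove $C_0$ itself. \textbf{This last step---the only genuine use of the $h_\eta$-deep hypothesis on $\lambda$---is the one I expect to require the most care.}

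\emph{Steps 2--3: reorganization and uniqueness.} If $\lambda$ is $p$-singular, then $\tld{w}_h\cdot\lambda$ is $p$-singular while (by Step 1) every $\tld{w}\cdot(\mu-\eta+s\pi\nu)$ is $p$-regular, so no $\nu$ contributes and \eqref{eqn:Qmult} is empty; both sides vanish. Assume $\lambda$ is $p$-regular. For a contributing $\nu$ set $\mu'_\nu\defeq\mu-\eta+s\pi\nu\in C_0$ and let $\tld w\in\tld W^+$ be the associated element. From $\tld{w}\cdot\mu'_\nu\uparrow\tld{w}_h\cdot\lambda$ the weights $\mu'_\nu,\lambda$ are $W_a$-linked; since $\ovl C_0$ meets each $W_a$-orbit exactly once, there is $\tld w_\lambda\in\tld W_1$ with $\lambda=\tld w_\lambda\cdot\mu'_\nu$ (adjusting by $\Omega$ if needed so that $\tld w_h\tld w_\lambda$ has the same image in $\Omega=\tld W/W_a$ as $\tld w$), and reading off the base alcove $C_0$ from $\tld{w}\cdot\mu'_\nu\uparrow(\tld{w}_h\tld{w}_\lambda)\cdot\mu'_\nu$ yields $\tld{w}\uparrow\tld{w}_h\tld{w}_\lambda$. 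Conversely each pair $(\tld w,\tld w_\lambda)$ indexing \eqref{eqn:Qmult} satisfies $\mu-\eta+s\pi(\tld w^{-1}(0))=\tld w_\lambda^{-1}\cdot\lambda\in C_0$ (using $\tld w_\lambda\in\tld W_1$ and $\lambda$ in the restricted alcove $\tld w_\lambda\cdot C_0$), and then Theorem~\ref{thm:jantzen}, with witness $\tld w$, shows the corresponding term is nonzero. This gives the equality with \eqref{eqn:Qmult}, the nonvanishing of each term, and the containment of each $\mu-\eta+s\pi(\tld w^{-1}(0))$ in $C_0$. Finally, suppose $Z$ is connected and $\nu_1,\nu_2$ both contribute. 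Apply Lemma~\ref{lemma:LAP} to the relations $\tld{w}_i\cdot\mu'_{\nu_i}\uparrow\tld{w}_h\cdot\lambda$, rewritten in the $\pi$-twisted form of that lemma (with $\tld w_\lambda=\pi(\tld w_i)$, $\lambda_0=\mu'_{\nu_i}$, and a fixed $(\tld w_\mu,\mu_0)$ with $\pi^{-1}(\tld w_\mu)\cdot\mu_0=\tld w_h\cdot\lambda$); one checks hypothesis \ref{item:modWa} because $t_{\mu'_{\nu_i}}\pi(\tld w_i)$ reduces mod $W_a$ to the class of $\mu-\eta$, using $s\pi\nu_i\equiv\pi\nu_i\pmod{\Z R}$. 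Lemma~\ref{lemma:LAP}, whose proof uses the $(1-p^f)$-torsion-freeness of $X^*(Z)=X^*(T)/\Z R$, then forces $\mu'_{\nu_1}=\mu_0=\mu'_{\nu_2}$, hence $s\pi\nu_1=s\pi\nu_2$, i.e.\ $\nu_1=\nu_2$; so \eqref{eqn:Qmult} has a single term.
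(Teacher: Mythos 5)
Your overall route is the paper's: reduce to Theorem \ref{thm:jantzen}, use Lemma \ref{lemma:apriori} to bound $h_\nu\leq h_\eta$, show $\mu-\eta+s\pi\nu\in C_0$, repackage $\nu$ as $\tld{w}\in\tld{W}^+$, and use Lemma \ref{lemma:LAP} for uniqueness when $Z$ is connected. The one place where your argument does not close is exactly the step you flagged: when it is $\lambda$ (rather than $\mu-\eta$) that is $h_\eta$-deep, you only extract from the linkage relation that $\mu-\eta+s\pi\nu$ is \emph{$p$-regular}, and $p$-regularity together with $h_{s\pi\nu}\leq h_\eta$ and $\mu-\eta\in C_0$ is genuinely insufficient: it only pins each alcove coefficient $n_\alpha$ to $\{-1,0,1\}$, and the dominance of $\tld{w}\cdot(\mu-\eta+s\pi\nu)+\eta$ does not rescue this. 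The missing observation, which is what the paper uses, is that $m$-deepness in a $p$-alcove is preserved along the $p$-dot $\tld{W}$-orbit (the dot action permutes the hyperplanes $\langle\cdot+\eta,\alpha^\vee\rangle=np$ with linear part in $W$). Hence $\mu-\eta+s\pi\nu$, lying in the dot-orbit of the $h_\eta$-deep weight $\lambda$, is itself $h_\eta$-deep in its $p$-alcove; combined with $-h_\eta<\langle\mu+s\pi\nu,\alpha^\vee\rangle<p+h_\eta$ for all $\alpha\in R^+$ this forces $n_\alpha=0$ for all $\alpha$, i.e.\ the alcove is $C_0$. With this substitution your Step 1 is complete.

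Two smaller remarks. In Step 2 the correct $\tld{w}_\lambda$ is produced automatically in the right $W_a$-coset: writing $\tld{w}_h\cdot\lambda=\tld{u}\tld{w}\cdot\mu'_\nu$ with $\tld{u}\in W_a$ gives $\tld{w}_\lambda\defeq\tld{w}_h^{-1}\tld{u}\tld{w}$, which lies in $\tld{W}_1$ because $\lambda$ is $p$-regular and $p$-restricted, and satisfies $\tld{w}_h\tld{w}_\lambda\in W_a\tld{w}$ by construction; no $\Omega$-adjustment is needed (and adjusting $\tld{w}_\lambda$ by $\Omega$ on the right would change $\tld{w}_\lambda^{-1}\cdot\lambda$, so it is not an available move). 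For uniqueness, your single application of Lemma \ref{lemma:LAP} extracting the conclusion $\lambda_0=\mu_0$ (hence $\mu'_{\nu_1}=\mu'_{\nu_2}$ and $\nu_1=\nu_2$) is a mild and valid variant of the paper's argument, which instead applies the lemma in both directions to obtain $\tld{w}_\lambda\uparrow\tld{w}'_\lambda\uparrow\tld{w}_\lambda$; both hinge on the same verification of hypothesis \ref{item:modWa}, which you carry out correctly.
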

\begin{proof}
Let $\mu$ and $\lambda$ be as in the statement of the corollary. 
Suppose that $\nu\in X^*(T)$ so that $[\widehat{Z}(1,\mu+(s\pi-p)\nu+(p-1)\eta):\widehat{L}(1,\lambda)]_{G_1T} \neq 0$. 
We claim that $h_\nu \leq h_\eta$. 
As in the proof of Theorem \ref{thm:jantzen}, $\sigma \cdot (\mu-\eta+(s\pi-p)\nu) \uparrow w_0 \cdot (\lambda - p\eta)$ for all $\sigma \in W$. %
Let $w\in W$ be the unique element such that $\tld{w} \defeq wt_{-\nu} \in \tld{W}^+$. 
Setting $\sigma=w$, Lemma \ref{lemma:apriori} implies that $h_\nu = h_{\tld{w}^{-1}(0)} \leq h_\eta$. 
We claim that $\mu-\eta+s\pi \nu \in C_0$ using that $h_\nu \leq h_\eta$. 
This is clear if $\mu-\eta$ is $h_\eta$-deep in $C_0$. 
If $\lambda$ is $h_\eta$-deep in its $p$-alcove, then so is $\mu-\eta+s\pi \nu$ as it is in the same $\tld{W}$-orbit under the $p$-dot action, in which case $\mu-\eta+s\pi \nu$ and $\mu-\eta$ must lie in the same $p$-alcove which is $C_0$. 

Theorem \ref{thm:jantzen} implies that $[\widehat{Z}(1,\mu+(s\pi-p)\nu+(p-1)\eta):\widehat{L}(1,\lambda)]_{G_1T} \neq 0$ is equivalent to $\tld{w} \cdot (\mu-\eta+s\pi(\tld{w}^{-1}(0))) \uparrow \tld{w}_h\cdot\lambda$ where $\tld{w}$ is defined in terms of $\nu$ as before. 
This is in turn equivalent to the fact that $\tld{w} \uparrow \tld{w}_h\tld{w}_\lambda$ and $\lambda = \tld{w}_\lambda \cdot (\mu-\eta+s\pi(\tld{w}^{-1}(0)))$ for some $\tld{w}_\lambda \in \tld{W}_1$. 

Finally, we show that only one term in \eqref{eqn:Qmult} is nonzero when the center $Z$ of $G$ is connected. 
Suppose that $\tld{w},\tld{w}' \in \tld{W}^+$ and $\tld{w}_\lambda,\tld{w}'_\lambda\in \tld{W}_1$ such that $\tld{w} \uparrow \tld{w}_h\tld{w}_\lambda$, $\tld{w}'\uparrow \tld{w}_h\tld{w}'_\lambda$ and $\tld{w}_\lambda \cdot (\mu-\eta+s\pi(\tld{w}^{-1}(0)))=\tld{w}'_\lambda \cdot (\mu-\eta+s\pi(\tld{w}'^{-1}(0)))$. 
We apply Lemma \ref{lemma:LAP} with $\lambda_0=\mu-\eta+s\pi(\tld{w}^{-1}(0))$, $\mu_0=\mu-\eta+s\pi(\tld{w}'^{-1}(0))$, $\tld{w}_\lambda=\pi(\tld{w}_\lambda)$ and $\tld{w}_\mu=\pi(\tld{w}'_{\lambda})$ (resp. with $\lambda_0=\mu-\eta+s\pi(\tld{w}'^{-1}(0))$, $\mu_0=\mu-\eta+s\pi(\tld{w}^{-1}(0))$, $\tld{w}_\lambda=\pi(\tld{w}'_{\lambda})$ and $\tld{w}_\mu=\pi(\tld{w}_\lambda)$) to obtain $\tld{w}_\lambda\uparrow\tld{w}'_{\lambda}$ (resp.~$\tld{w}'_\lambda\uparrow\tld{w}_{\lambda}$).
Hence $\tld{w}_\lambda = \tld{w}'_\lambda$ and thus $\tld{w}^{-1}(0) = \tld{w}'^{-1}(0)$ from which we deduce that $\tld{w} = \tld{w}'$.
(Note that condition \ref{item:modWa} in Lemma \ref{lemma:LAP} is satisfied since $\tld{w} \uparrow \tld{w}_h\tld{w}_\lambda$, $\tld{w}'\uparrow \tld{w}_h\tld{w}'_\lambda$ imply $\tld{w}(0)+\eta\equiv \tld{w}_\lambda$, $\tld{w}'(0)+\eta\equiv \tld{w}'_\lambda$ modulo $W_a$, so that $\mu-\eta+s\pi(\tld{w}^{-1}(0))+\pi(\tld{w}_\lambda)\equiv \mu\equiv\mu-\eta+s\pi(\tld{w}'^{-1}(0))+\pi(\tld{w}'_\lambda)$ modulo $W_a$.
Similar arguments will be used to check condition \ref{item:modWa} whenever we invoke Lemma \ref{lemma:LAP}.)
\end{proof}

The following is our main result on the reduction of generic Deligne--Lusztig representations. 

\begin{thm}\label{thm:main}
Suppose that the center $Z$ of $G$ is connected, $\mu-\eta$ is $h_\eta$-deep in $C_0$, and $\lambda \in X_1(T)$. 
Then $[\ovl{R}_s(\mu):F(\lambda)]_\Gamma \neq 0$ if and only if there exist $\tld{w}\in \tld{W}^+$ and $\tld{w}_\lambda \in \tld{W}_1$ such that $\tld{w} \uparrow \tld{w}_h \tld{w}_\lambda$ and $\lambda = \tld{w}_\lambda \cdot (\mu - \eta + s\pi(\tld{w}^{-1}(0)))$. 
Moreover, in this case
\[
[\ovl{R}_s(\mu):F(\lambda)]_\Gamma = [\widehat{Z}(1,\mu+(s\pi-p)(\tld{w}^{-1}(0))+(p-1)\eta):\widehat{L}(1,\lambda)]_{G_1T}. 
\]
\end{thm}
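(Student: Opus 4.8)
The strategy is to compare the two expressions for $\dim\Hom_\Gamma(Q_\lambda,\ovl R_s(\mu))$ provided by Corollary~\ref{cor:jantzen}, namely
\[
\sum_{\substack{\lambda'\in X_1(T)/(p-\pi)X^0(T)\\ \nu\in X(T)^+}}[\ovl R_s(\mu):F(\lambda')]_\Gamma\,[L(\lambda')\otimes L(\pi(\nu)):L(\lambda+p\nu)]_G
\quad\text{and}\quad
\sum_{\nu\in X^*(T)}[\widehat Z(1,\mu+(s\pi-p)\nu+(p-1)\eta):\widehat L(1,\lambda)]_{G_1T},
\]
and to show that under the hypothesis that $\mu-\eta$ is $h_\eta$-deep in $C_0$ the left-hand sum \emph{collapses to a single term} $[\ovl R_s(\mu):F(\lambda)]_\Gamma$ (with coefficient $1$), while the right-hand sum, by Corollary~\ref{cor:jantzendeep} and the connectedness of $Z$, also has at most one nonzero term equal to the asserted Frobenius-kernel multiplicity. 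The key point is: if $F(\lambda')$ contributes to the left-hand sum, i.e.\ $[L(\lambda')\otimes L(\pi(\nu)):L(\lambda+p\nu)]_G\neq 0$ for some $\nu\in X(T)^+$ and $\lambda'$ with $[\ovl R_s(\mu):F(\lambda')]_\Gamma\neq 0$, then in fact $\nu=0$ and $\lambda'=\lambda$.

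First I would use the deep hypothesis to put everything into the base alcove. Since $\mu-\eta\in C_0$ is $h_\eta$-deep and $F(\lambda')$ is a Jordan--H\"older factor of $\ovl R_s(\mu)$, the equivalent reformulation in Theorem~\ref{thm:jantzen}/Corollary~\ref{cor:jantzendeep} shows there is $\tld w'\in\tld W^+$ with $h_{\tld w'^{-1}(0)}\le h_\eta$ (by Lemma~\ref{lemma:apriori}, as $\mu-\eta$ is even $h_\eta$-deep, hence certainly $1$-deep once $p\geq 2h_\eta$), such that $\mu-\eta+s\pi(\tld w'^{-1}(0))\in C_0$ and $\lambda'=\tld w'_{\lambda'}\cdot(\mu-\eta+s\pi(\tld w'^{-1}(0)))$ for some $\tld w'_{\lambda'}\in\tld W_1$. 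In particular $\lambda'_0\defeq \mu-\eta+s\pi(\tld w'^{-1}(0))$ is the element of $\ovl C_0$ in the $p$-dot $W_a$-orbit of $\lambda'$, and $\tld w'_{\lambda'}$ is a $p$-restricted representative. Next, from $[L(\lambda')\otimes L(\pi(\nu)):L(\lambda+p\nu)]_G\neq 0$ and $\lambda,\lambda'\in X_1(T)$, Lemma~\ref{lemma:packet}\ref{lemma:packet:it:1} gives $\lambda'+p\nu\uparrow\lambda+\pi(\nu')$ for some $\nu'\in\Conv(\nu)$, and then part~\ref{lemma:packet:it:2} forces $h_\nu\le h_\eta$. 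Now I would run the same analysis on $\lambda$: writing $\lambda=\tld w_\lambda\cdot\lambda_0$ for the analogous base-alcove element $\lambda_0\in\ovl C_0$ coming from the Jantzen reformulation for $Q_\lambda$ itself — here one uses that $L(\lambda+p\nu)$, being a constituent with $\nu$ small, lies in the same block-type neighbourhood — so that $\lambda$ too has a representative $\tld w_\lambda\in\tld W_1$ over $\lambda_0\in\ovl C_0$.

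The heart of the argument is then an application of Lemma~\ref{lemma:LAP}. From $\lambda'+p\nu\uparrow\lambda+\pi(\nu')$ I want to extract a statement of the form $\pi^{-1}(\tld w'_{\lambda'})\cdot\lambda'_0\uparrow\pi^{-1}(\tld w_\lambda)\cdot\lambda_0$ after absorbing the translations $p\nu$ and $\pi(\nu')$ into the extended-affine-Weyl elements — this is where the bounds $h_\nu,h_{\nu'}\le h_\eta$ and $h_{\tld w'^{-1}(0)}\le h_\eta$ are used, together with $p\geq 2h_\eta$, to guarantee that the shifted elements still lie in (closures of) restricted alcoves and base alcoves, so that Lemma~\ref{lemma:presentation} (with $m=h_\eta$) identifies the requisite elements of $\Omega$ and keeps $\lambda'_0,\lambda_0\in\ovl C_0$. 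Condition~\ref{item:modWa} of Lemma~\ref{lemma:LAP} holds because both weights lie in a single $p$-dot $\tld W$-orbit (they are JH factors of, resp.\ weights attached to, $\ovl R_s(\mu)$). Lemma~\ref{lemma:LAP} then yields $\lambda'_0=\lambda_0$ and $\tld w'_{\lambda'}\uparrow\tld w_\lambda$ (after translating $F$ back, using $\lambda_0\in C_0$ when it is genuinely in the open alcove, and the facet version otherwise). Running the symmetric comparison — swapping the roles, which is legitimate since by Proposition~\ref{prop:packet} the multiplicity $[L(\lambda)\otimes L(\pi(\nu'')):L(\lambda'+p\nu'')]_G$ is also forced to be considered — gives $\tld w_\lambda\uparrow\tld w'_{\lambda'}$ as well, whence $\tld w_\lambda=\tld w'_{\lambda'}$, and therefore $\lambda=\lambda'$ and $\nu=\nu'=0$. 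Plugging $\lambda'=\lambda$, $\nu=0$ into Proposition~\ref{prop:packet} (which says $[L(\lambda)\otimes L(0):L(\lambda)]_G=1$) collapses \eqref{eq:mults} to $[\ovl R_s(\mu):F(\lambda)]_\Gamma$, and Corollary~\ref{cor:jantzendeep} identifies this with the single surviving Frobenius-kernel multiplicity. Finally, the ``only if'' and the existence of the pair $(\tld w,\tld w_\lambda)$ are exactly the content of Corollary~\ref{cor:jantzendeep}.

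\textbf{Main obstacle.} The delicate step is the bookkeeping in the previous paragraph: translating the $\uparrow$-relation $\lambda'+p\nu\uparrow\lambda+\pi(\nu')$ between weights into an $\uparrow$-relation between $\tld W$-elements acting on points of $\ovl C_0$, and verifying that all the auxiliary elements produced (the $p$-restricted representatives, the elements of $\Omega$ from Lemma~\ref{lemma:presentation}, the chambers in Lemma~\ref{lemma:facet}) are simultaneously compatible so that Lemma~\ref{lemma:LAP} applies in \emph{both} directions. Controlling this requires carefully tracking that $h_\eta$-deepness of $\mu-\eta$ is not eroded below $1$-deepness after the various translations by elements of norm $\le h_\eta$, which is exactly why the threshold lands at $h$-deep (equivalently $h_\eta$-deep) rather than $2h-1$; Remark~\ref{rmk:sharp} (and Remark~\ref{rmk:depth}, giving $p\ge(h_\eta+1)(h_\eta+1)$ in the worst case versus our standing $p\ge 2h_\eta$) indicates there is little slack, so the estimates must be done with care rather than crudely.
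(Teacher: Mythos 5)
Your overall strategy --- collapsing the packet sum in Corollary \ref{cor:jantzen} by showing that the only $\lambda'$ with both $[\ovl{R}_s(\mu):F(\lambda')]_\Gamma\neq 0$ and $[L(\lambda')\otimes L(\pi(\nu)):L(\lambda+p\nu)]_G\neq 0$ is $\lambda'\equiv\lambda$ --- is exactly the paper's, and your first half (lowest-alcove presentations of $\lambda$ and $\lambda'$ via Corollary \ref{cor:jantzendeep}, the bounds $h_{\tld{w}'^{-1}(0)}\leq h_\eta$ and $h_\nu\leq h_\eta$ from Lemmas \ref{lemma:apriori} and \ref{lemma:packet}, and one application of Lemma \ref{lemma:LAP}) matches the paper's proof. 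A small slip: Lemma \ref{lemma:packet}\ref{lemma:packet:it:1} gives $\lambda+p\nu\uparrow\lambda'+\pi(\nu')$, not $\lambda'+p\nu\uparrow\lambda+\pi(\nu')$ as you wrote. (Lemma \ref{lemma:presentation} plays no role here; it is only needed in the weight-elimination section.)

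The genuine gap is your final step. You claim a ``symmetric comparison'' yields the reverse relation $\tld{w}_\lambda\uparrow\tld{w}'_{\lambda'}$ because ``by Proposition \ref{prop:packet} the multiplicity $[L(\lambda)\otimes L(\pi(\nu'')):L(\lambda'+p\nu'')]_G$ is also forced to be considered.'' This is unjustified: Proposition \ref{prop:packet} asserts no symmetry between $[L(\lambda')\otimes L(\pi(\nu)):L(\lambda+p\nu)]_G$ and $[L(\lambda)\otimes L(\pi(\nu'')):L(\lambda'+p\nu'')]_G$, and the hypotheses hand you only the one-sided relation $\lambda+p\nu\uparrow\lambda'+\pi(\nu')$, hence after Lemma \ref{lemma:LAP} only $t_\nu\tld{w}_\lambda\uparrow\tld{w}_{\lambda'}w$. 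With a single direction of $\uparrow$ you cannot conclude that these elements are equal, and the deduction $\nu\in X^0(T)$ collapses. The paper closes this differently: after producing a common vertex $v\in\ovl{A}_0$ fixed by $w$ via Lemmas \ref{lemma:barycenter} and \ref{lemma:facet}, it proves the uniform bound \eqref{eqn:hbound}, namely $\langle s\pi(\tld{w}'^{-1}(0))+\pi(\kappa),\alpha^\vee\rangle\leq h_\eta+1$ for all roots $\alpha$ and all $\kappa\in\Conv(\nu)$, by a chain of root-pairing inequalities using $\nu+\tld{w}_\lambda(v)\leq\tld{w}_{\lambda'}(v)$; it then invokes the translation-principle statement (Lemma \ref{lemma:translation}/Corollary \ref{cor:translation}) to conclude that $\lambda+p\nu$ lies in the same $p$-alcove as the $p$-restricted weight $\lambda'$, so $\lambda+p\nu$ is $p$-restricted, forcing $\nu\in X^0(T)$ and then $\lambda-\lambda'=(p-\pi)\nu$. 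Neither \eqref{eqn:hbound} nor Corollary \ref{cor:translation} appears in your outline, and without them (or a genuine source of the reverse $\uparrow$ relation) the argument does not close. Note also that the correct conclusion is $\lambda\equiv\lambda'$ modulo $(p-\pi)X^0(T)$, which suffices because the packet sum is indexed by these classes, rather than literally $\nu=0$.
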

\begin{proof}
If $\Hom(Q_\lambda,\ovl{R}_s(\mu)) = 0$, then the result holds by Corollary \ref{cor:jantzendeep}, and so we assume otherwise. 
Suppose now that $\lambda,\lambda' \in X_1(T)$ so that
\begin{enumerate}[label=(\arabic*)]
\item \label{item:packet} $[L(\lambda') \otimes L(\pi(\nu)):L(\lambda+p\nu)]_G \neq 0$ for some $\nu \in X(T)^+$ and 
\item \label{item:contribute} $[\ovl{R}_s(\mu):F(\lambda')]_\Gamma \neq 0$. 
\end{enumerate}
We will show that $\lambda-\lambda'\in (p-\pi)X^0(T)$. 
Then the result follows from Corollaries \ref{cor:jantzen} and \ref{cor:jantzendeep}. 

By Proposition \ref{prop:packet} and \ref{item:contribute}, we have that $\Hom_\Gamma(Q_{\lambda'},\ovl{R}_s(\mu)) \neq 0$. 
Corollary \ref{cor:jantzendeep} implies that there are $\tld{w}' \in \tld{W}^+$ and $\tld{w}_{\lambda'} \in \tld{W}_1$ such that $\tld{w}'\uparrow \tld{w}_h\tld{w}_{\lambda'}$ and $\lambda' = \tld{w}_{\lambda'} \cdot (\mu - \eta + s\pi(\tld{w}'^{-1}(0)))$. 
In particular, $h_{\tld{w}'^{-1}(0)} = h_{\tld{w}'(0)} \leq h_{\tld{w}_h\tld{w}_{\lambda'}(0)} \leq h_\eta$. 
Similarly, there are $\tld{w} \in \tld{W}^+$ and $\tld{w}_\lambda$ such that $\tld{w}\uparrow \tld{w}_h\tld{w}_\lambda$ and $\lambda = \tld{w}_\lambda \cdot (\mu - \eta + s\pi(\tld{w}^{-1}(0)))$ (and $h_{\tld{w}^{-1}(0)} \leq h_\eta$). 
Since $\mu - \eta$ is $h_\eta$-deep in $C_0$, we conclude that 
\begin{equation}\label{eqn:0deep}
\textrm{$\lambda$ and $\lambda'$ are $0$-deep in $\tld{w}_\lambda\cdot C_0$ and $\tld{w}_{\lambda'}\cdot C_0$, respectively.}
\end{equation}

By \ref{item:packet} and Lemma \ref{lemma:packet}\ref{lemma:packet:it:1}, we have $\lambda+p\nu \uparrow \lambda'+\pi(\nu')$ for some $\nu \in X(T)^+$ and $\nu' \in \Conv(\nu)$ so that 
\begin{equation}\label{eqn:inequality}
(t_\nu \tld{w}_\lambda)\cdot (\mu - \eta + s\pi(\tld{w}^{-1}(0))) \uparrow \tld{w}_{\lambda'} \cdot (\mu-\eta +s\pi(\tld{w}'^{-1}(0)) + \pi(\nu'))
\end{equation}
for some $\nu \in X(T)^+$ and some (possibly different) $\nu'\in \Conv(\nu)$. 
\eqref{eqn:inequality} implies that $\mu-\eta +s\pi(\tld{w}'^{-1}(0)) + \pi(\nu')$, which is in the same $p$-dot orbit as $\mu-\eta+s\pi(\tld{w}^{-1}(0))$, is $0$-deep in its $p$-alcove. 
Let $w \in W_a$ be the unique element such that $w^{-1} \cdot (\mu-\eta +s\pi(\tld{w}'^{-1}(0)) + \pi(\nu'))$ is in $C_0$. 
\eqref{eqn:inequality}  and Lemma \ref{lemma:LAP} then imply that $t_\nu \tld{w}_\lambda \uparrow \tld{w}_{\lambda'} w$ and $w^{-1}\cdot (\mu-\eta +s\pi(\tld{w}'^{-1}(0)) + \pi(\nu')) = \mu - \eta + s\pi(\tld{w}^{-1}(0))$. 
(The fact that condition \ref{item:modWa} in Lemma \ref{lemma:LAP} holds is checked by a similar argument as in the proof of Corollary \ref{cor:jantzendeep}, using moreover $\nu'\equiv \nu$ modulo $W_a$ from $\nu'\in \Conv(\nu)$.)
In particular, we have 
\begin{equation*}\label{eq:boundnu}
\nu + \tld{w}_\lambda(v) \leq \tld{w}_{\lambda'}w(v) 
\end{equation*}
for any $v \in \ovl{A}_0$. 
We assume without loss of generality that $p > h_\eta(h_\eta+1)$ by Remark \ref{rmk:depth}. 
As $\nu' \in \Conv(\nu)$ and $h_\nu \leq h_\eta < \frac{p}{h_\eta+1}$ by Lemma \ref{lemma:packet}\ref{lemma:packet:it:2}, the closures of the alcoves $A_0$ and $w(A_0)$ intersect by Lemma \ref{lemma:facet} (taking $x = \frac{1}{p}(\mu+s\pi(\tld{w}'^{-1}(0)))$ and $\eps = \frac{\pi(\nu')}{p}$, and noting that $x+\eps$ is in the interior of $w\ovl{(A_0)}$, so that $\ovl{A}_2=w\ovl{(A_0)}$ in the notation of Lemma \ref{lemma:facet}), say at $v \in \ovl{A}_0$. 
Thus, $w$ stabilizes $v \in \ovl{A}_0$ (by \cite[Ch.~5, \S 3, Proposition 1]{bourbaki}), and we have
\begin{equation}\label{eq:boundnu'}
\nu + \tld{w}_\lambda(v) \leq \tld{w}_{\lambda'}(v). 
\end{equation}

We now claim that 
\begin{equation}\label{eqn:hbound}
\langle s\pi(\tld{w}'^{-1}(0)) + \pi(\kappa),\alpha^\vee\rangle \leq h_\eta+1
\end{equation} 
for any root $\alpha$ and $\kappa \in \Conv(\nu)$. 
Using that $\Conv(\nu)$ is $W$-invariant and that $\alpha_0$ is a highest root if and only if $\pi^{-1}(\alpha_0)$ is, it suffices to show that $\langle \sigma \tld{w}'^{-1}(0) + \kappa,\alpha_0^\vee\rangle \leq h_\eta+1$ for any $\sigma \in W$, any highest (and thus dominant) root $\alpha_0$, and any $\kappa \in \Conv(\nu)$. 
We in fact claim the following series of inequalities:  
\begin{align*}
\langle \sigma \tld{w}'^{-1}(0) + \kappa,\alpha_0^\vee\rangle &\leq \langle \sigma\tld{w}'^{-1}(0) + \nu,\alpha_0^\vee\rangle\\
&\leq \langle \sigma\tld{w}'^{-1}(0) + \tld{w}_{\lambda'}(v) - \tld{w}_\lambda(v),\alpha_0^\vee\rangle\\
&\leq \langle \eta - \tld{w}_{\lambda'}(0)+\tld{w}_{\lambda'}(v) - \tld{w}_\lambda(v),\alpha_0^\vee\rangle \\
&\leq h_\eta + \langle w_{\lambda'}(v) - \tld{w}_\lambda(v),\alpha_0^\vee\rangle \\ 
&\leq h_\eta + \langle w_{\lambda'}(v),\alpha_0^\vee\rangle \\ 
&\leq h_\eta + \langle v,\alpha_0^\vee\rangle\\
&\leq h_\eta+1,
\end{align*}
where $w_{\lambda'} \in W$ is the image of $\tld{w}_{\lambda'}$ under the projection $\tld{W} \onto W$. 
Indeed, 
\begin{itemize}
\item the first inequality follows from the fact that $\kappa \leq \nu$ for $\kappa \in \Conv(\nu)$ and $\alpha_0$ is dominant; 
\item the second inequality follows from $\nu \leq \tld{w}_{\lambda'}(v) - \tld{w}_\lambda(v)$ by \eqref{eq:boundnu'} and $\alpha_0 \in X(T)^+$;
\item the third inequality follows from $\alpha_0 \in X(T)^+$ and $\sigma \tld{w}'^{-1}(0) \leq \eta - \tld{w}_{\lambda'}(0)$ which uses $\sigma \tld{w}'^{-1}(0) \leq -w_0 \tld{w}'(0)$ (since both sides are in $W(-\tld{w}'(0))$ and the RHS is dominant) and $\tld{w}' \uparrow \tld{w}_h \tld{w}_{\lambda'}$; 
\item the fifth inequality uses $\alpha_0 \in R^+$ and $\tld{w}_\lambda(v) \in X(T)^+$; 
\item the sixth inequality uses $w_{\lambda'}(v) \leq v$ and $\alpha_0 \in X(T)^+$; and 
\item the final inequality uses $v\in \ovl{A}_0$. 
\end{itemize}

We will use Corollary \ref{cor:translation} with $\mu$, $\nu$, and $\lambda$ taken to be $\lambda'$, $\pi\nu$, and $\lambda+p\nu$, respectively, to show that $\lambda+ p\nu$ and $\lambda'$ are in the same $p$-alcove. 
It suffices to check that the hypotheses apply. 
\eqref{eqn:0deep} gives that (the dominant) $\lambda'$ and $\lambda+p\nu$ are $0$-deep in their $p$-alcoves. 
Using that $\mu-\eta$ is $h_\eta$-deep in $C_0$, \eqref{eqn:hbound} implies that $\lambda'+\pi(\kappa) = \tld{w}_{\lambda'}\cdot (\mu-\eta+s\pi(\tld{w}'^{-1}(0))+w_{\lambda'}^{-1}\pi(\kappa))$ is $(-1)$-deep in $\tld{w}_{\lambda'}\cdot C_0$ for any $\kappa \in \Conv(\nu)$, i.e.~that $\lambda'+\pi(\kappa)$ is in the closure of the $p$-alcove containing $\lambda'$ for any $\kappa \in \Conv(\nu)$. 
\ref{item:packet} gives the final hypothesis. 

From the previous paragraph, $\lambda+p\nu$ is $p$-restricted so that $\nu \in X^0(T)$. 
Then $L(\lambda') \otimes L(\pi(\nu)) \cong L(\lambda'+\pi(\nu))$ so that \ref{item:packet} implies that $L(\lambda'+\pi(\nu)) \cong L(\lambda+p\nu)$ which implies that $\lambda-\lambda' = (p-\pi)\nu$. 
\end{proof}

\begin{rmk}
\label{rmk:sharp}
In fact, the bound in Theorem \ref{thm:main} is sharp. 
If $G_0 = \GL_{2/\F_p}$, then $\ovl{R}_s(\mu)$ has $2$ Jordan--H\"older factors if $\mu - \eta$ is $1$-deep, but $\ovl{R}_{(12)}(1,0)$ has $1$ Jordan--H\"older factor. 
\end{rmk}

\section{Deligne--Lusztig reductions containing a simple module}
\label{sec:DL:lift}

In this section, we prove Theorem \ref{thm:2:intro} which exhibits Deligne--Lusztig representations whose reductions contain a fixed simple module (see Theorem \ref{thm:JHfactor}). 

\begin{lemma}\label{lemma:vertex}
Suppose that the center $Z$ of $G$ is connected. 
Let $s\in W$, $\lambda_0 \in C_0$, $\lambda' \in X_1(T)$, $\tld{w}_\lambda \in \tld{W}_1$, $\tld{w}\in \tld{W}$, $\nu\in X(T)^+$, and $\nu' \in \Conv(\nu)$ such that 
\begin{enumerate}[label=(\arabic*)]
\item \label{item:packet0} $t_\nu\tld{w}_\lambda \cdot \lambda_0 \uparrow \lambda'+\pi(\nu')$;
\item \label{item:Weyl} if $\tld{w}_{\lambda'} \in t_\nu\tld{w}_\lambda W_a\cap \tld{W}_1$ such that $\lambda' \in \tld{w}_{\lambda'}\cdot \ovl{C}_0$, then $\lambda'+\pi(\nu') \in \tld{w}_{\lambda'}W\cdot C_0$ and $t_\nu\tld{w}_\lambda \uparrow \tld{w}_{\lambda'}$; 
\item \label{item:C0} $\lambda_0 - s\pi(\tld{w}_h\tld{w}_\lambda)^{-1}(0)+s\pi\tld{w}^{-1}(0) \in C_0$; and 
\item \label{item:genericJH} $\tld{w}\cdot (\lambda_0 - s\pi(\tld{w}_h\tld{w}_\lambda)^{-1}(0)+s\pi\tld{w}^{-1}(0))+\eta \in X(T)^+$ and $\tld{w}\cdot (\lambda_0 - s\pi(\tld{w}_h\tld{w}_\lambda)^{-1}(0)+s\pi\tld{w}^{-1}(0)) \uparrow \tld{w}_h \cdot \lambda'$. 
\end{enumerate}
Then $\nu \in X^0(T)$ and $t_\nu\tld{w}_\lambda \cdot \lambda_0 = \lambda'+\pi(\nu)$. 
\end{lemma}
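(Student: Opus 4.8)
Throughout write $\lambda \defeq \tld{w}_\lambda\cdot\lambda_0 \in X_1(T)$, so that $t_\nu\tld{w}_\lambda\cdot\lambda_0 = \lambda+p\nu$, and set $\mu_0 \defeq \lambda_0 - s\pi(\tld{w}_h\tld{w}_\lambda)^{-1}(0) + s\pi\tld{w}^{-1}(0)$, which lies in $C_0$ by hypothesis \ref{item:C0}. The plan is to run the endgame of the proof of Theorem \ref{thm:main} in this abstracted setting, the point being that hypotheses \ref{item:packet0}--\ref{item:genericJH} are precisely the pre-digested outputs of its intermediate steps. The whole statement reduces to showing that $\lambda+p\nu$ is $p$-restricted: $\lambda$ is $0$-deep in the restricted $p$-alcove $\tld{w}_\lambda\cdot C_0$, and $\lambda+p\nu$ is $0$-deep in the $p$-alcove $t_\nu\tld{w}_\lambda\cdot C_0 = (\tld{w}_\lambda\cdot C_0)+p\nu$ because $\lambda_0\in C_0$; so if $\lambda+p\nu$ is $p$-restricted then $t_\nu\tld{w}_\lambda\cdot C_0$ is restricted, and comparing with the restricted alcove $\tld{w}_\lambda\cdot C_0$ along each simple coroot direction forces $\langle \nu,\alpha^\vee\rangle = 0$ for all $\alpha\in\Delta$, i.e.\ $\nu\in X^0(T)$. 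Once $\nu\in X^0(T)$ we get $\Conv(\nu)=\{\nu\}$, hence $\nu'=\nu$, and $\pi(\nu)\in X^0(T)$ pairs trivially with every coroot, so $\lambda'+\pi(\nu)$ lies in the same $p$-alcove as $\lambda'$; then hypothesis \ref{item:packet0} reads $\lambda+p\nu\uparrow\lambda'+\pi(\nu)$ with both sides $0$-deep in a single $p$-alcove, which forces $\lambda+p\nu=\lambda'+\pi(\nu)$, the second assertion.

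The first steps produce the auxiliary data. Hypothesis \ref{item:packet0} with Lemma \ref{lemma:packet}\ref{lemma:packet:it:2} gives $h_\nu\leq\max_{v\in\ovl{A}_0}h_{\tld{w}_h\tld{w}_\lambda(v)}\leq h_\eta$; by Remark \ref{rmk:depth} we may assume $p>h_\eta(h_\eta+1)$, so $\tfrac1p\pi(\nu')$ has height $\leq\tfrac1{h_\eta+1}$, as needed for Lemma \ref{lemma:facet}. Hypotheses \ref{item:C0} and \ref{item:genericJH} say exactly that $\tld{w}\cdot\mu_0+\eta\in X(T)^+$ and $\tld{w}\cdot\mu_0\uparrow\tld{w}_h\cdot\lambda'$ with $\mu_0\in C_0$, so Theorem \ref{thm:jantzen}, applied as in the proof of Corollary \ref{cor:jantzendeep} (with ``$\mu-\eta$'' there equal to $\lambda_0-s\pi(\tld{w}_h\tld{w}_\lambda)^{-1}(0)$ here), produces $\tld{w}_{\lambda'}\in\tld{W}_1$ with $\lambda'=\tld{w}_{\lambda'}\cdot\mu_0$ and $\tld{w}\uparrow\tld{w}_h\tld{w}_{\lambda'}$, whence $h_{\tld{w}^{-1}(0)}=h_{\tld{w}(0)}\leq h_{\tld{w}_h\tld{w}_{\lambda'}(0)}\leq h_\eta$; and since $\mu_0\in C_0$, $\lambda'$ is $0$-deep in the restricted $p$-alcove $\tld{w}_{\lambda'}\cdot C_0$. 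One then checks that this $\tld{w}_{\lambda'}$ lies in $t_\nu\tld{w}_\lambda W_a\cap\tld{W}_1$ with $\lambda'\in\tld{w}_{\lambda'}\cdot\ovl{C}_0$, a bookkeeping of $\Omega$-cosets modulo $\Z R$ of exactly the kind in the parentheticals in the proofs of Corollary \ref{cor:jantzendeep} and Theorem \ref{thm:main}, using $\nu'\equiv\nu\pmod{\Z R}$ and the torsion-freeness of $X^*(T)/\Z R=X^*(Z)$ coming from the connectedness of $Z$ (and $G$ having simply connected derived subgroup); hence hypothesis \ref{item:Weyl} applies and gives $t_\nu\tld{w}_\lambda\uparrow\tld{w}_{\lambda'}$ and $\lambda'+\pi(\nu')\in\tld{w}_{\lambda'}W\cdot C_0$.

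Now I import the alcove-geometric core of the proof of Theorem \ref{thm:main}. Combining hypothesis \ref{item:packet0} with Lemma \ref{lemma:LAP} (its condition \ref{item:modWa} checked as above) yields $t_\nu\tld{w}_\lambda\uparrow\tld{w}_{\lambda'}w$ together with an equality of $\ovl{C}_0$-representatives, for the unique $w\in W_a$ normalising $\lambda'+\pi(\nu')$ into $C_0$ under the $p$-dot action; Lemma \ref{lemma:facet}, applied with $\eps=\tfrac1p\pi(\nu')$ and appropriate basepoint $x$, produces $v\in\ovl{A}_0$ fixed by $w$ at which $\ovl{A}_0$ and $w(\ovl{A}_0)$ meet, giving $\nu+\tld{w}_\lambda(v)\leq\tld{w}_{\lambda'}(v)$. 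Running the same chain of inequalities as in Theorem \ref{thm:main} (using $\tld{w}\uparrow\tld{w}_h\tld{w}_{\lambda'}$, the dominance of $\tld{w}_\lambda(v)$ and $\tld{w}_{\lambda'}(v)$, $w_{\lambda'}(v)\leq v$, and $v\in\ovl{A}_0$) gives $\langle s\pi(\tld{w}^{-1}(0))+\pi(\kappa),\alpha^\vee\rangle\leq h_\eta+1$ for every root $\alpha$ and $\kappa\in\Conv(\nu)$; with $\lambda'+\pi(\nu')\in\tld{w}_{\lambda'}W\cdot C_0$ and $h_\nu\leq h_\eta$ this forces $\lambda+p\nu$ and $\lambda'$ into the same $p$-alcove, playing the role that Corollary \ref{cor:translation} plays in Theorem \ref{thm:main} (available here thanks to the explicit translation bound just obtained). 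Since $\lambda'\in X_1(T)$ is $0$-deep in the restricted alcove $\tld{w}_{\lambda'}\cdot C_0$, that common $p$-alcove is restricted, so $\lambda+p\nu$ is $p$-restricted, and by the first paragraph $\nu\in X^0(T)$ and $t_\nu\tld{w}_\lambda\cdot\lambda_0=\lambda'+\pi(\nu)$.

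The alcove geometry is essentially transcribed from Theorem \ref{thm:main}, so I expect the main obstacle to be the combinatorial bookkeeping licensing the appeals to hypothesis \ref{item:Weyl} and to Lemma \ref{lemma:LAP}: showing that the $\tld{w}_{\lambda'}$ extracted from Theorem \ref{thm:jantzen} really sits in the prescribed coset $t_\nu\tld{w}_\lambda W_a$, and that the ``normalise into $C_0$'' elements stay in the correct $\Omega$-coset. Both come down to the torsion-freeness of $X^*(T)/\Z R$ afforded by the connectedness of $Z$, exactly as in the proof of Theorem \ref{thm:main}.
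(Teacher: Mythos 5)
There is a genuine gap, and it sits exactly at the step you flag as ``playing the role that Corollary \ref{cor:translation} plays in Theorem \ref{thm:main}''. In Theorem \ref{thm:main}, the bound $\langle s\pi(\tld{w}'^{-1}(0))+\pi(\kappa),\alpha^\vee\rangle\leq h_\eta+1$ is useless on its own: it is combined with the standing hypothesis that $\mu-\eta$ is $h_\eta$-deep in $C_0$ to conclude that $\lambda'+\pi(\kappa)$ is $(-1)$-deep in the $p$-alcove of $\lambda'$ for every $\kappa\in\Conv(\nu)$, which is the hypothesis of Corollary \ref{cor:translation}. Lemma \ref{lemma:vertex} assumes no lower depth bound on $\lambda_0$ whatsoever (and in its application via Lemma \ref{lemma:domdirection}, $\lambda_0$ may lie arbitrarily close to a wall of $C_0$), so a perturbation of size $h_\eta+1$ can push $\lambda'+\pi(\kappa)$ across walls and your ``same $p$-alcove'' conclusion does not follow. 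This is not a presentational shortcut you can patch: the entire point of isolating Lemma \ref{lemma:vertex} from Lemmas \ref{lemma:anydirection} and \ref{lemma:domdirection} is that the translation-principle/depth arguments are done \emph{there} (where depth or smallness is assumed) to verify hypotheses \ref{item:Weyl} and \ref{item:C0}, while the vertex lemma itself must run without them.

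The paper's actual argument replaces your same-alcove step by exploiting hypothesis \ref{item:C0} quantitatively, not just as a licence to normalize. Two applications of Lemma \ref{lemma:LAP} give both $\lambda'_0=\lambda_0-\pi(\nu'')$ (from \ref{item:packet0}, after writing $\tld{w}_{\lambda'}^{-1}\cdot(\lambda'+\pi(\nu'))=\lambda'_0+\pi(\nu'')$ with $\nu''\in\Conv(\nu)$) and $\lambda'_0=\lambda_0-s\pi(\tld{w}_h\tld{w}_\lambda)^{-1}(0)+s\pi\tld{w}^{-1}(0)$ (from \ref{item:C0} and \ref{item:genericJH}), whence the explicit identity $\nu''=\pi^{-1}(s)\bigl((\tld{w}_h\tld{w}_\lambda)^{-1}(0)-\tld{w}^{-1}(0)\bigr)$. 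Feeding this into the chain $\nu\leq \eta-\tld{w}_\lambda(0)+w_0\tld{w}(0)\leq \eta-\tld{w}_\lambda(0)-w_\lambda\tld{w}^{-1}(0)=w_\lambda\pi^{-1}(s^{-1})(\nu'')\leq\nu$ forces all inequalities to be equalities; Lemma \ref{lemma:dominant} then pins down $\tld{w}=\tld{w}_h t_\nu\tld{w}_\lambda$, so $t_\nu\tld{w}_\lambda\in\tld{W}^+\cap\tld{w}_h^{-1}\tld{W}^+=\tld{W}_1$, and since $\tld{w}_\lambda\in\tld{W}_1$ this gives $\nu\in X^0(T)$ directly. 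Your reduction of the final equality $t_\nu\tld{w}_\lambda\cdot\lambda_0=\lambda'+\pi(\nu)$ to the statement $\nu\in X^0(T)$ is fine, and your bookkeeping of $W_a$-cosets via connectedness of $Z$ matches the paper's; but the route you propose to $\nu\in X^0(T)$ does not close without importing a depth hypothesis the lemma does not have.
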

\begin{proof}
Let $\tld{w}_{\lambda'}$ be as in \ref{item:Weyl}, and let $\lambda'_0 \defeq \tld{w}_{\lambda'}^{-1}\cdot \lambda' \in \ovl{C}_0$. 
Then \ref{item:Weyl} implies that $\lambda'_0+w_{\lambda'}^{-1}\pi(\nu') = \tld{w}_{\lambda'}^{-1}\cdot (\lambda'+\pi(\nu')) \in w \cdot C_0$ for some $w \in W$. 
By \cite[Lemma 2.2.2]{MLM}, $w^{-1} \cdot (\lambda'_0+w_{\lambda'}^{-1}\pi(\nu')) = \lambda'_0+\pi(\nu'')$ for some $\nu''\in\Conv(\nu)$. 
Furthermore, \ref{item:packet0} and Lemma \ref{lemma:LAP} with $\lambda_0$, $\mu_0$, $\tld{w}_\lambda$, and $\tld{w}_\mu$ taken to be $\lambda_0$, $w^{-1} \cdot (\lambda'_0+w_{\lambda'}^{-1}\pi(\nu'))= \lambda'_0+\pi(\nu'')$, $\pi(t_\nu\tld{w}_\lambda)$, and $\pi(\tld{w}_{\lambda'}w)$, respectively imply that $t_\nu \tld{w}_\lambda \uparrow \tld{w}_{\lambda'}w$ and $\lambda_0 = \lambda'_0+\pi(\nu'')$. 

\ref{item:C0}, \ref{item:genericJH}, and Lemma \ref{lemma:LAP} with $\lambda_0$, $\mu_0$, $\tld{w}_\lambda$, and $\tld{w}_\mu$ taken to be $\lambda_0 - s\pi(\tld{w}_h\tld{w}_\lambda)^{-1}(0)+s\pi\tld{w}^{-1}(0)$, $\lambda'_0$, $\pi(\tld{w})$ and $\pi(\tld{w}_h\tld{w}_{\lambda'})$, respectively, imply that $\tld{w}\in \tld{W}^+$, $\tld{w} \uparrow \tld{w}_h\tld{w}_{\lambda'}$, and $\lambda_0 - s\pi(\tld{w}_h\tld{w}_\lambda)^{-1}(0)+s\pi\tld{w}^{-1}(0) = \lambda'_0 = \lambda_0 - \pi(\nu'')$. 
Thus, we have $\nu'' = \pi^{-1}(s)((\tld{w}_h\tld{w}_\lambda)^{-1}(0)-\tld{w}^{-1}(0))$. 
From $\tld{w} \uparrow \tld{w}_h\tld{w}_{\lambda'}\uparrow \tld{w}_h t_\nu \tld{w}_\lambda$ using \ref{item:Weyl}, we also have that $\tld{w}(0)\leq w_0\nu+w_0\tld{w}_\lambda(0)-w_0\eta$. 

Let $w_\lambda\in W$ be the image of $\tld{w}_\lambda$. 
We have the inequalities
\begin{align*}
\nu &\leq \eta-\tld{w}_\lambda(0)+w_0\tld{w}(0) \\
&\leq \eta - \tld{w}_\lambda(0) - w_\lambda \tld{w}^{-1}(0) \\
&= w_\lambda \pi^{-1}(s^{-1}) (\nu'') \\
&\leq \nu
\end{align*}
(the second inequality follows from the fact that $w_0\tld{w}(0)$ is antidominant and $-w_\lambda\tld{w}^{-1}(0) \in Ww_0\tld{w}(0)$). 
Thus, these inequalities are all equalities. 
In particular, $\tld{w}(0) = t_{w_0\nu}\tld{w}_h\tld{w}_\lambda(0)$ and $w_\lambda \tld{w}^{-1}(0) = -w_0\tld{w}(0) \in X(T)^+$. 
The first of these equalities also implies that $w_\lambda(t_{w_0\nu}\tld{w}_h\tld{w}_\lambda)^{-1}(0) = -w_0\tld{w}(0) \in X(T)^+$. 
Lemma \ref{lemma:dominant} with $\tld{s}$ and $\tld{w}$ taken to be $\tld{w}$ and $t_{w_0\nu}\tld{w}_h\tld{w}_\lambda =\tld{w}_ht_\nu \tld{w}_\lambda $, respectively, implies that $\tld{w} = \tld{w}_ht_\nu \tld{w}_\lambda$ (recall that $\tld{w} \uparrow \tld{w}_h\tld{w}_{\lambda'} \uparrow \tld{w}_ht_\nu\tld{w}_\lambda$ by \ref{item:Weyl}). 
In particular, $\tld{w}_h t_\nu \tld{w}_\lambda=\tld{w}\in \tld{W}^+$.
Since also $t_\nu \tld{w}_\lambda\in \tld{W}^+$ (as $\tld{w}_\lambda\in \tld{W}^+$ and $\nu\in X(T)^+$), we deduce that $t_\nu \tld{w}_\lambda \in \tld{W}^+\cap \tld{w}_h^{-1}\tld{W}^+ = \tld{W}_1$. 
Since $\tld{w}_\lambda \in \tld{W}_1$, we must have $\nu \in X^0(T)$. 
This further implies that $\lambda'+\pi(\nu') \in \tld{w}_{\lambda'}\cdot \ovl{C}_0$ so that we can take $w = 1$ above and $t_\nu\tld{w}_\lambda\uparrow \tld{w}_{\lambda'}$. 
We now have inequalities $\tld{w} \uparrow \tld{w}_h \tld{w}_{\lambda'} \uparrow t_{w_0\nu}\tld{w}_h\tld{w}_\lambda = \tld{w}$. 
Thus all these inequalities are equalities and so $t_\nu\tld{w}_\lambda = \tld{w}_{\lambda'}$. 
We conclude that $t_\nu\tld{w}_\lambda \cdot \lambda_0$ and $\lambda'+\pi(\nu')$ are in the same alcove and must be equal by \ref{item:packet0}. 
\end{proof}

In practice, the hypotheses \ref{item:Weyl} and \ref{item:C0} in Lemma \ref{lemma:vertex} are sometimes implied by other hypotheses. 

\begin{lemma}\label{lemma:anydirection}
Suppose that the center $Z$ of $G$ is connected and $p>(h_\eta+1)^2$. 
Let $s\in W$, $\tld{w}\in \tld{W}$, $\tld{w}_\lambda \in \tld{W}_1$, $\lambda_0$ be $\max_{v\in \ovl{A}_0} h_{\tld{w}_h\tld{w}_\lambda(v)}$-deep in $C_0$, $\lambda' \in X_1(T)$, $\nu\in X(T)^+$, and $\nu' \in \Conv(\nu)$ such that 
\begin{itemize}
\item $t_\nu\tld{w}_\lambda \cdot \lambda_0 \uparrow \lambda'+\pi(\nu')$; and
\item $\tld{w}\cdot (\lambda_0 - s\pi(\tld{w}_h\tld{w}_\lambda)^{-1}(0)+s\pi\tld{w}^{-1}(0))+\eta \in X(T)^+$; and 
\item $\tld{w}\cdot (\lambda_0 - s\pi(\tld{w}_h\tld{w}_\lambda)^{-1}(0)+s\pi\tld{w}^{-1}(0)) \uparrow \tld{w}_h \cdot \lambda'$. 
\end{itemize}
Then, letting $\tld{w}_{\lambda'} \in t_\nu\tld{w}_\lambda W_a\cap \tld{W}_1$ such that $\lambda' \in \tld{w}_{\lambda'}\cdot \ovl{C}_0$, we have 
\begin{itemize}
\item $\lambda'$ and $\lambda'+\pi(\nu')$ lie in the same $p$-alcove;
\item
$\lambda_0 - s\pi(\tld{w}_h\tld{w}_\lambda)^{-1}(0)+s\pi\tld{w}^{-1}(0)\in C_0$; and
\item $t_\nu\tld{w}_\lambda\uparrow \tld{w}_{\lambda'}$.
\end{itemize}
\end{lemma}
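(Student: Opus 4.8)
The plan is to observe that the three conclusions are, after unravelling, exactly the two hypotheses \ref{item:Weyl} and \ref{item:C0} of Lemma~\ref{lemma:vertex}: the second conclusion is \ref{item:C0}, the first gives the clause $\lambda'+\pi(\nu')\in\tld{w}_{\lambda'}W\cdot C_0$ of \ref{item:Weyl}, and the third is the other clause of \ref{item:Weyl}. So I would verify \ref{item:Weyl} and \ref{item:C0} under the stated depth assumption on $\lambda_0$ (the hypotheses \ref{item:packet0} and \ref{item:genericJH} being among ours). Write $\mu_0\defeq \lambda_0 - s\pi(\tld{w}_h\tld{w}_\lambda)^{-1}(0)+s\pi\tld{w}^{-1}(0)$ and $M\defeq \max_{v\in\ovl{A}_0} h_{\tld{w}_h\tld{w}_\lambda(v)}$, so $\lambda_0$ is $M$-deep in $C_0$; recall that $h_{(\cdot)}$ is invariant under $W$ and $\pi$ and that the $p$-dot action preserves the property of being $m$-deep in an alcove. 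First I would record two a priori bounds. Applying Lemma~\ref{lemma:packet}\ref{lemma:packet:it:2} to $t_\nu\tld{w}_\lambda\cdot \lambda_0=\tld{w}_\lambda\cdot\lambda_0+p\nu\uparrow \lambda'+\pi(\nu')$ gives $h_\nu\le M\le h_\eta$, hence also $h_{\nu'}\le M$; and since $h_{(\tld{w}_h\tld{w}_\lambda)^{-1}(0)}=h_{\tld{w}_h\tld{w}_\lambda(0)}\le M$ (as $0\in\ovl{A}_0$), the $M$-depth of $\lambda_0$ forces $\lambda_0 - s\pi(\tld{w}_h\tld{w}_\lambda)^{-1}(0)\in C_0$. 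Then Lemma~\ref{lemma:apriori}, applied with this last weight as ``$\mu-\eta$'', with $\lambda'$ as ``$\lambda$'', and with $\tld{w},s$ as given (using $\tld{w}\cdot\mu_0+\eta\in X(T)^+$ and $\tld{w}\cdot\mu_0\uparrow\tld{w}_h\cdot\lambda'$, hence $\tld{w}\cdot\mu_0\le\tld{w}_h\cdot\lambda'$), gives $h_{\tld{w}^{-1}(0)}\le h_\eta+1$.

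Next I would check \ref{item:Weyl} and deduce the first and third conclusions. From $t_\nu\tld{w}_\lambda\cdot\lambda_0\uparrow \lambda'+\pi(\nu')$ these weights lie in a common $W_a$-orbit for the $p$-dot action, so $\lambda'+\pi(\nu')$ is $M$-deep in its $p$-alcove; writing $\lambda'_0\defeq \tld{w}_{\lambda'}^{-1}\cdot\lambda'\in\ovl{C}_0$ and $w_{\lambda'}$ for the finite part of $\tld{w}_{\lambda'}$, the weight $\tld{w}_{\lambda'}^{-1}\cdot(\lambda'+\pi(\nu'))=\lambda'_0+w_{\lambda'}^{-1}\pi(\nu')$ is $M$-deep in its $p$-alcove and, since $\lambda'_0\in\ovl{C}_0$ and $h_{\nu'}\le M<p$, lies within $h$-distance $M$ of $\ovl{C}_0$; these two facts force that $p$-alcove to be $C_0$, so in particular $\lambda'_0+w_{\lambda'}^{-1}\pi(\nu')\in\ovl{C}_0$. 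Now Lemma~\ref{lemma:LAP}, applied with $\lambda_0$, $\lambda'_0+w_{\lambda'}^{-1}\pi(\nu')$, $\pi(t_\nu\tld{w}_\lambda)$, $\pi(\tld{w}_{\lambda'})$ in the roles of $\lambda_0,\mu_0,\tld{w}_\lambda,\tld{w}_\mu$ (condition \ref{item:modWa} being checked as in Corollary~\ref{cor:jantzendeep}), yields $\lambda'_0+w_{\lambda'}^{-1}\pi(\nu')=\lambda_0$ and, since $\lambda_0\in C_0$, that $t_\nu\tld{w}_\lambda\uparrow\tld{w}_{\lambda'}$. Hence $\lambda'+\pi(\nu')=\tld{w}_{\lambda'}\cdot\lambda_0\in \tld{w}_{\lambda'}\cdot C_0$, and likewise $\lambda'=\tld{w}_{\lambda'}\cdot(\lambda_0-w_{\lambda'}^{-1}\pi(\nu'))\in\tld{w}_{\lambda'}\cdot C_0$ (as $\lambda_0-w_{\lambda'}^{-1}\pi(\nu')\in C_0$, $\lambda_0$ being $M$-deep and $h_{\nu'}\le M$); so \ref{item:Weyl} holds and $\lambda',\lambda'+\pi(\nu')$ lie in the same $p$-alcove.

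Finally I would check \ref{item:C0}, i.e.\ $\mu_0\in C_0$. Since $\tld{w}\cdot\mu_0\uparrow\tld{w}_h\cdot\lambda'$, the weights $\mu_0$ and $\lambda'$ lie in a common $W_a$-orbit for the $p$-dot action; combined with the previous step this identifies the $p$-alcove in which $\mu_0$ is $0$-deep as a fixed $\tld{W}$-translate of $C_0$, and one then tracks the affine reflections realizing $\tld{w}\cdot\mu_0\uparrow\tld{w}_h\cdot\lambda'$ against the $M$-depth of $\lambda'+\pi(\nu')=\tld{w}_{\lambda'}\cdot\lambda_0$ (by estimates in the spirit of the proof of Theorem~\ref{thm:main}) to upgrade this to the depth needed for the final comparison. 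On the other hand $\mu_0$ differs from $\lambda_0-s\pi(\tld{w}_h\tld{w}_\lambda)^{-1}(0)\in C_0$ by a vector of $h$-size $\le h_\eta+1<p$. As in the second paragraph of the proof of Corollary~\ref{cor:jantzendeep}, these two facts together force the $p$-alcove of $\mu_0$ to be $C_0$, which is \ref{item:C0}. The hardest point is precisely this last step: unlike in Corollary~\ref{cor:jantzendeep} neither ``$\mu-\eta$'' nor $\lambda'$ is assumed deep, so the depth of $\mu_0$ in its own $p$-alcove must be manufactured from the interplay of the two $\uparrow$-hypotheses together with the $M$-depth of $\lambda_0$; carrying out this alcove-geometric bookkeeping is where the real work lies, and the hypothesis $p>(h_\eta+1)^2$ is exactly what makes all the ``deep weight plus nearby point lie in the same alcove'' comparisons used above valid.
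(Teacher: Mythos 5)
Your handling of the first and third conclusions is sound and essentially matches the paper: the $\uparrow$ hypothesis transports the $M$-depth of $\lambda_0$ (where $M\defeq\max_{v\in \ovl{A}_0} h_{\tld{w}_h\tld{w}_\lambda(v)}$) to $\lambda'+\pi(\nu')$, Lemma \ref{lemma:packet}\ref{lemma:packet:it:2} gives $h_\nu\leq M$, and Lemma \ref{lemma:LAP} applied to $\lambda_0$ and $\tld{w}_{\lambda'}^{-1}\cdot(\lambda'+\pi(\nu'))$ yields $t_\nu\tld{w}_\lambda\uparrow\tld{w}_{\lambda'}$.

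The problem is the middle conclusion, $\mu_0\defeq\lambda_0-s\pi(\tld{w}_h\tld{w}_\lambda)^{-1}(0)+s\pi\tld{w}^{-1}(0)\in C_0$, which you explicitly defer to unspecified ``alcove-geometric bookkeeping''; this is precisely the content of the lemma, and your sketch does not close it. Quantitatively: $\lambda_0-s\pi(\tld{w}_h\tld{w}_\lambda)^{-1}(0)$ lies in $C_0$ but is only $0$-deep there, and Lemma \ref{lemma:apriori} bounds the perturbation $s\pi\tld{w}^{-1}(0)$ only by $h_\eta+1$. A barely-interior point of $C_0$ moved by a vector of size up to $h_\eta+1$ can leave $C_0$, and the integrality argument of Corollary \ref{cor:jantzendeep} then yields only $|n_\alpha|\leq 1$, not $n_\alpha=0$. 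The paper's actual proof manufactures the sharper bound $\langle s\pi\tld{w}^{-1}(0)+\pi(\kappa),\alpha^\vee\rangle\leq h_{\tld{w}_h\tld{w}_\lambda(v_0)}+1\leq M+1$ for all $\kappa\in\Conv(\nu)$ (inequality \eqref{eqn:h+1}), and plays it against the $M$-depth of $\mu_0+\pi(\nu'')$ in its own $p$-alcove, obtained by translating the third hypothesis by $\pi(\nu'')$; only then does the comparison with the $0$-depth of $\lambda_0-s\pi(\tld{w}_h\tld{w}_\lambda)^{-1}(0)$ in $C_0$ force $n_\alpha=0$. Proving \eqref{eqn:h+1} requires producing a point $v_0\in\ovl{A}_0$ fixed by the element $\tld{u}\in W_a$ with $\lambda_0+s\pi\tld{w}^{-1}(0)+\pi(\nu'')-\eta\in\tld{u}\cdot C_0$; this is the application of Lemma \ref{lemma:facet} with $\eps=\frac{1}{p}s\pi\tld{w}^{-1}(0)$, and it is exactly here, via $h_\eps\leq\frac{h_\eta+1}{p}<\frac{1}{h_\eta+1}$, that the hypothesis $p>(h_\eta+1)^2$ enters (not, as you suggest, in routine ``nearby points share an alcove'' comparisons). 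One then needs the chain of inequalities \eqref{eqn:v0}, comparing $\sigma\tld{w}^{-1}(0)$ with $-w_0(\tld{w}(v_0)-w(v_0))_+$ and using $\tld{w}(v_0)\leq\tld{w}_h\tld{w}_{\lambda'}(v_0)$ together with $\tld{w}_h\tld{w}_{\lambda'}\uparrow\tld{w}_ht_\nu\tld{w}_\lambda$. None of these ingredients appears in your proposal, so the proof of the second bullet is missing rather than merely compressed.
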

\begin{proof}
Let $m \defeq \max_{v\in \ovl{A}_0} h_{\tld{w}_h\tld{w}_\lambda(v)}$. 
As $\lambda_0$ is $m$-deep in $C_0$, we have that $\lambda'+\pi(\nu')$ is $m$-deep in its $p$-alcove. 
Lemma \ref{lemma:packet}\ref{lemma:packet:it:2} implies that $h_\nu\leq m$ hence $\lambda'$ and $\lambda'+\pi(\nu')$ lie in the same $p$-alcove.
(The inequality $h_\nu\leq m$, coming from Lemma \ref{lemma:packet}\ref{lemma:packet:it:2},  will be used multiple times in this proof.)
The third bullet point in the statement of the lemma gives 
\begin{equation}\label{eqn:uparrow:lambda'}
\tld{w}\cdot(\lambda_0 - s\pi(\tld{w}_h\tld{w}_\lambda)^{-1}(0)+s\pi\tld{w}^{-1}(0)+\pi(\nu'')) \uparrow \tld{w}_h \cdot (\lambda'+\pi(\nu'))
\end{equation} 
for $\nu'' = \pi^{-1}(w^{-1}w_0)(\nu')\in \Conv(\nu)$ with $w\in W$ the image of $\tld{w}$. 
Moreover, since the LHS of \eqref{eqn:uparrow:lambda'} lies in the same $p$-alcove as $\tld{w}\cdot(\lambda_0 - s\pi(\tld{w}_h\tld{w}_\lambda)^{-1}(0)+s\pi\tld{w}^{-1}(0))$ by the depth bound in the previous paragraph and Lemma \ref{lemma:packet}\ref{lemma:packet:it:2}, $\tld{w}\cdot(\lambda_0 - s\pi(\tld{w}_h\tld{w}_\lambda)^{-1}(0)+s\pi\tld{w}^{-1}(0)+\pi(\nu''))$ is dominant by the second item in the statement of the lemma. 
Moreover it is $m$-deep in its $p$-alcove.
Since $h_{s\pi(\tld{w}_h\tld{w}_\lambda)^{-1}(0)}\leq m$, $\lambda_0 - s\pi(\tld{w}_h\tld{w}_\lambda)^{-1}(0)+s\pi\tld{w}^{-1}(0)+\pi(\nu'')$ and $\lambda_0 +s\pi\tld{w}^{-1}(0)+\pi(\nu'')$ are in the same $p$-alcove, which we denote by $\tld{u}\cdot C_0$ for $\tld{u} \in W_a$.
As $\tld{w}\cdot(\lambda_0 - s\pi(\tld{w}_h\tld{w}_\lambda)^{-1}(0)+s\pi\tld{w}^{-1}(0)+\pi(\nu''))$ is dominant, we conclude that $\tld{w}\tld{u} \in \tld{W}^+$. 
The second and third bullet points and Lemma \ref{lemma:apriori} 
imply that $h_{\tld{w}(0)} \leq h_\eta+1$. 

We claim that $\tld{u}$ fixes a $v_0\in \ovl{A}_0 $. 
Since $h_{\nu''} \leq m$ by Lemma \ref{lemma:packet}\ref{lemma:packet:it:2}, $\lambda_0+\pi(\nu'') \in C_0$. 
Then Lemma \ref{lemma:facet}, taking $x = \frac{1}{p}(\lambda_0+\pi(\nu'')+\eta)$ and $\eps = \frac{1}{p} s\pi\tld{w}^{-1}(0)$ and using $p>(h_\eta+1)^2$, implies that $\tld{u}(\ovl{A}_0)$ and $\ovl{A}_0$ intersect (note that $x+\eps$ above is in the interior of an alcove so that $\ovl{A}_2=\tld{u}(\ovl{A}_0)$ in the notation of Lemma \ref{lemma:facet}). 
We take $v_0 \in \tld{u}(\ovl{A}_0)\cap \ovl{A}_0$.
Then $\tld{u}(v_0)=v_0$ by \cite[Ch.~5, \S 3, Theorem 2]{bourbaki}.

We now summarize our key conclusions. 
\eqref{eqn:uparrow:lambda'} implies that $\tld{w}\tld{u} \uparrow \tld{w}_h\tld{w}_{\lambda'}$ for some $\tld{w}_{\lambda'} \in \tld{W}$ with $\lambda' \in \tld{w}_{\lambda'} \cdot C_0$. 
In particular, $\tld{w}(v_0) \leq \tld{w}_h\tld{w}_{\lambda'}(v_0)$ for some $v_0\in \ovl{A}_0 $. 
Additionally, Lemma \ref{lemma:LAP} with $\lambda_0$, $\mu_0$, $\tld{w}_\lambda$, and $\tld{w}_\mu$ taken to be $\lambda_0$, $\tld{w}_{\lambda'}^{-1}\cdot (\lambda'+\pi(\nu'))$, $\pi(t_\nu\tld{w}_\lambda)$, and $\pi(\tld{w}_{\lambda'})$ implies that $t_\nu \tld{w}_\lambda \uparrow \tld{w}_{\lambda'}$. 

We claim that 
\begin{equation}\label{eqn:h+1}
\langle s\pi\tld{w}^{-1}(0)+\pi(\kappa),\alpha^\vee \rangle \leq h_{\tld{w}_h\tld{w}_\lambda(v_0)}+1
\end{equation} 
for all roots $\alpha$ and $\kappa \in \Conv(\nu)$. 
We will use that for any $\sigma \in W$, 
\begin{align}\label{eqn:v0}
\nonumber \sigma\tld{w}^{-1}(0) &\leq -w_0(\tld{w}(0)_+)\\
\nonumber &= -w_0(\tld{w}(v_0)-w(v_0))_+ \\
\nonumber &\leq -w_0(\tld{w}(v_0)_++ (-v_0)_+) \\
 &= -w_0(\tld{w}(v_0) - w_0v_0) \\
\nonumber &\leq -w_0\tld{w}_h\tld{w}_{\lambda'}(v_0) +v_0 \\
\nonumber &\leq -w_0\tld{w}_h t_\nu \tld{w}_\lambda(v_0)+v_0 \\
\nonumber &= -\nu+\eta-\tld{w}_\lambda(v_0)+v_0. 
\end{align}
Here, \begin{itemize}
\item the first inequality uses that both sides are in the same $W$-orbit and the RHS is dominant; 
\item the third inequality uses that $\tld{w}(v_0) \leq \tld{w}_h\tld{w}_{\lambda'}(v_0)$; and 
\item the fourth inequality uses that $\tld{w}_h\tld{w}_{\lambda'} \uparrow \tld{w}_h t_\nu \tld{w}_\lambda$ and $v_0\in \ovl{A}_0$. 
\end{itemize}
Then for $\kappa \in \Conv(\nu)$, 
\begin{align*}
\langle s\pi(\tld{w}^{-1}(0))+\pi(\kappa),\alpha^\vee \rangle &= \langle \pi^{-1}(s)\tld{w}^{-1}(0)+\kappa,\pi^{-1}(\alpha)^\vee \rangle\\
&\leq \langle (\pi^{-1}(s)\tld{w}^{-1}(0)+\kappa)_+,\alpha_0^\vee \rangle\\
&\leq \langle \pi^{-1}(s)\tld{w}^{-1}(0)_++\kappa_+,\alpha_0^\vee \rangle\\
&\leq \langle-\nu+\eta-\tld{w}_\lambda(v_0)+v_0+\nu,\alpha_0^\vee\rangle \\
&= \langle \eta - \tld{w}_\lambda(v_0) + v_0,\alpha_0^\vee\rangle \\
&\leq h_{\tld{w}_h\tld{w}_\lambda(v_0)}+1
\end{align*}
where $\alpha_0$ is some highest root, the third inequality follows from \eqref{eqn:v0}, and the last inequality follows from the fact that $v_0 \in \ovl{A}_0$. 

As $\lambda_0 - s\pi(\tld{w}_h\tld{w}_\lambda)^{-1}(0)+s\pi\tld{w}^{-1}(0)+\pi(\nu'')$ is $m$-deep in its $p$-alcove, \eqref{eqn:h+1} implies that $\lambda_0 - s\pi(\tld{w}_h\tld{w}_\lambda)^{-1}(0)$ is $(-1)$-deep in the same $p$-alcove. 
On the other hand, we know that $\lambda_0 - s\pi(\tld{w}_h\tld{w}_\lambda)^{-1}(0)$ is $0$-deep in $C_0$ so that $\lambda_0 - s\pi(\tld{w}_h\tld{w}_\lambda)^{-1}(0)+s\pi\tld{w}^{-1}(0)+\pi(\nu'')$ is $m$-deep in $C_0$. 
Then $\lambda_0 - s\pi(\tld{w}_h\tld{w}_\lambda)^{-1}(0)+s\pi\tld{w}^{-1}(0)$ is in $C_0$ by Lemma \ref{lemma:packet}. 
\end{proof}

\begin{lemma}\label{lemma:domdirection}
Suppose that the center $Z$ of $G$ is connected. 
Let $\lambda_0 \in C_0$, $\lambda' \in X_1(T)$, $\tld{w}_\lambda \in \tld{W}_1$ with image $w_\lambda\in W$, $\tld{w}\in \tld{W}$, $\nu\in X(T)^+$, and $\nu' \in \Conv(\nu)$ such that 
\begin{enumerate}[label=(\arabic*)]
\item \label{item:small} $\langle \lambda_0+\eta,\alpha^\vee \rangle < p - h_\eta-\max_{v\in \ovl{A}_0} h_{\tld{w}_h\tld{w}_\lambda(v)}$ for all roots $\alpha$; 
\item \label{item:cover} $t_\nu\tld{w}_\lambda \cdot \lambda_0 \uparrow \lambda'+\pi(\nu')$; 
\item \label{item:dominant} $\tld{w}\cdot (\lambda_0 + \pi \tld{w}_h\tld{w}_\lambda(0)+\pi w_0w_\lambda \tld{w}^{-1}(0))+\eta \in X(T)^+$; and 
\item \label{item:jantzen} $\tld{w}\cdot (\lambda_0 + \pi \tld{w}_h\tld{w}_\lambda(0)+\pi w_0w_\lambda \tld{w}^{-1}(0)) \uparrow \tld{w}_h \cdot \lambda'$. 
\end{enumerate}
Then, letting $\tld{w}_{\lambda'} \in t_\nu\tld{w}_\lambda W_a\cap \tld{W}_1$ such that $\lambda' \in \tld{w}_{\lambda'}\cdot \ovl{C}_0$, we have 
\begin{itemize}
\item $\lambda'+\pi(\nu') \in \tld{w}_{\lambda'}W\cdot C_0$; 
\item $\lambda_0+\pi\tld{w}_h\tld{w}_\lambda(0)+\pi w_0w_\lambda\tld{w}^{-1}(0) \in C_0$; and
\item $t_\nu \tld{w}_\lambda \uparrow \tld{w}_{\lambda'}$.
\end{itemize} 
\end{lemma}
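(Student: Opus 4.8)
The plan is to mimic the proof of Lemma~\ref{lemma:anydirection}, with the ``smallness'' hypothesis~\ref{item:small} playing the role that the $m$-deepness of $\lambda_0$ played there; throughout put $m\defeq \max_{v\in\ovl{A}_0}h_{\tld{w}_h\tld{w}_\lambda(v)}$ and $s\defeq \pi w_0w_\lambda\pi^{-1}\in W$, so that $\pi\tld{w}_h\tld{w}_\lambda(0)+\pi w_0w_\lambda\tld{w}^{-1}(0)=-s\pi(\tld{w}_h\tld{w}_\lambda)^{-1}(0)+s\pi\tld{w}^{-1}(0)$ and hypotheses~\ref{item:dominant}--\ref{item:jantzen} become precisely the conditions of Lemma~\ref{lemma:vertex}\ref{item:genericJH}; thus this lemma, like Lemma~\ref{lemma:anydirection}, exists to verify hypotheses~\ref{item:Weyl} and~\ref{item:C0} of Lemma~\ref{lemma:vertex}. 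By Lemma~\ref{lemma:packet}\ref{lemma:packet:it:2} applied to~\ref{item:cover} (with $\lambda\defeq \tld{w}_\lambda\cdot\lambda_0$, so that $\lambda+p\nu=t_\nu\tld{w}_\lambda\cdot\lambda_0$) one has $h_\nu\leq m\leq h_\eta$, hence also $h_{\nu'}\leq h_\eta$ since $\nu'\in\Conv(\nu)$.

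The first and third bullet points I would obtain from Lemma~\ref{lemma:LAP}. Put $\lambda'_0\defeq \tld{w}_{\lambda'}^{-1}\cdot\lambda'\in\ovl{C}_0$, let $w_{\lambda'}\in W$ be the image of $\tld{w}_{\lambda'}$, and choose $w\in W$ and $\nu''\in\Conv(\nu)$ with $w^{-1}\cdot(\lambda'_0+w_{\lambda'}^{-1}\pi(\nu'))=\lambda'_0+\pi(\nu'')$ by \cite[Lemma~2.2.2]{MLM}. Applying Lemma~\ref{lemma:LAP} to~\ref{item:cover} with $(\lambda_0,\mu_0,\tld{w}_\lambda,\tld{w}_\mu)$ taken to be $(\lambda_0,\lambda'_0+\pi(\nu''),\pi(t_\nu\tld{w}_\lambda),\pi(\tld{w}_{\lambda'}w))$ --- condition~\ref{item:modWa} being checked as in the proof of Corollary~\ref{cor:jantzendeep}, using $\nu''\equiv\nu\bmod W_a$ --- yields $\lambda_0=\lambda'_0+\pi(\nu'')$ and $t_\nu\tld{w}_\lambda\uparrow\tld{w}_{\lambda'}w$. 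As $h_{\nu''}\leq h_\nu\leq m$, hypothesis~\ref{item:small} passes to $\lambda'_0$ with the stronger bound $\langle\lambda'_0+\eta,\alpha^\vee\rangle<p-h_\eta$; together with $h_{\nu'}\leq h_\eta$ and $p$-regularity of $\lambda'_0+w_{\lambda'}^{-1}\pi(\nu')$ (inherited from $t_\nu\tld{w}_\lambda\cdot\lambda_0$ via~\ref{item:cover}) this forces $\lambda'_0+w_{\lambda'}^{-1}\pi(\nu')\in W\cdot C_0$, i.e.~$\lambda'+\pi(\nu')\in\tld{w}_{\lambda'}W\cdot C_0$ (first bullet point); one may then take $w=1$, giving $t_\nu\tld{w}_\lambda\uparrow\tld{w}_{\lambda'}$ (third bullet point).

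For the second bullet point I would first observe $\mu'\defeq\lambda_0+\pi\tld{w}_h\tld{w}_\lambda(0)\in C_0$: its upper root pairings are $<(p-h_\eta-m)+m=p-h_\eta$ by~\ref{item:small} and $\langle\pi\tld{w}_h\tld{w}_\lambda(0),\alpha^\vee\rangle\leq h_{\tld{w}_h\tld{w}_\lambda(0)}\leq m$, while its lower pairings are $>0$ since $\lambda_0\in C_0$ and $\pi\tld{w}_h\tld{w}_\lambda(0)$ is dominant ($\tld{w}_h\tld{w}_\lambda\in\tld{W}^+$, $0\in\ovl{A}_0$, and $\pi$ preserves $R^+$). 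Then Lemma~\ref{lemma:apriori} with $\mu-\eta=\mu'$, this $s$, $\lambda=\lambda'$, and the given $\tld{w}$ (using~\ref{item:dominant},~\ref{item:jantzen}, and that $\uparrow$ implies $\leq$) gives $h_{\tld{w}(0)}=h_{\tld{w}^{-1}(0)}\leq h_\eta+1$. Next, via Lemma~\ref{lemma:facet} applied to $x=\tfrac1p(\mu'+\pi(\nu'')+\eta)$ and $\eps=\tfrac1p s\pi\tld{w}^{-1}(0)$ I would produce $v_0\in\ovl{A}_0$ fixed by the translation $\tld{u}\in W_a$ carrying $C_0$ onto the $p$-alcove of $\mu'+s\pi\tld{w}^{-1}(0)+\pi(\nu'')$, with $\tld{w}\tld{u}\in\tld{W}^+$ and (combining~\ref{item:jantzen} with $t_\nu\tld{w}_\lambda\uparrow\tld{w}_{\lambda'}$) $\tld{w}\tld{u}\uparrow\tld{w}_h\tld{w}_{\lambda'}\uparrow\tld{w}_ht_\nu\tld{w}_\lambda$; the inequality chains \eqref{eqn:v0} and \eqref{eqn:h+1} of the proof of Lemma~\ref{lemma:anydirection} then give $\langle s\pi\tld{w}^{-1}(0)+\pi(\kappa),\alpha^\vee\rangle\leq h_{\tld{w}_h\tld{w}_\lambda(v_0)}+1\leq m+1$ for all roots $\alpha$ and $\kappa\in\Conv(\nu)$. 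Combining this with $\mu'\in C_0$, $\lambda_0=\mu'-\pi\tld{w}_h\tld{w}_\lambda(0)$, $h_{\nu''}\leq m$, and~\ref{item:small} forces $\mu'+s\pi\tld{w}^{-1}(0)=\lambda_0+\pi\tld{w}_h\tld{w}_\lambda(0)+\pi w_0w_\lambda\tld{w}^{-1}(0)\in C_0$.

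The hard part is this last step. In Lemma~\ref{lemma:anydirection} the membership ``$\in C_0$'' is read off from the endgame ``$m$-deep in its $p$-alcove and $0$-deep in $C_0$, hence its $p$-alcove is $C_0$'', whereas here $\lambda_0$ is only small and not deep, so that anchor is unavailable; instead one must balance the refined directional bound~\eqref{eqn:h+1} (the $v_0$-argument) against the precise calibration of~\ref{item:small}, the point being that the perturbation $\pi\tld{w}_h\tld{w}_\lambda(0)+\pi w_0w_\lambda\tld{w}^{-1}(0)$ is dominant up to an error of size $\leq m\leq h_\eta$ on highest roots --- hence, by symmetry, on lowest roots --- which is exactly the margin built into~\ref{item:small}. (This is also where a lower bound on $p$, such as the $p>(h_\eta+1)^2$ of Lemma~\ref{lemma:anydirection}, enters through Lemma~\ref{lemma:facet}.) Everything else is bookkeeping with Lemma~\ref{lemma:LAP}, Lemma~\ref{lemma:dominant}, and \cite[Ch.~5]{bourbaki}.
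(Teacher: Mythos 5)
Your frame is right: the lemma exists to verify hypotheses \ref{item:Weyl} and \ref{item:C0} of Lemma \ref{lemma:vertex}, your identification $s=\pi(w_0w_\lambda)$ matching Theorem \ref{thm:JHfactor}\ref{item:domdirection} is correct, and your derivation of the first bullet point (via Lemma \ref{lemma:LAP}, the bound $h_{\nu''}\leq h_\nu\leq m$ from Lemma \ref{lemma:packet}\ref{lemma:packet:it:2}, and the margin in \ref{item:small}) is sound and in fact more direct than the paper's. But the second and third bullet points --- the heart of the lemma --- are not proved, and the route you propose for them does not close the gap. For the third bullet: Lemma \ref{lemma:LAP} only gives $t_\nu\tld{w}_\lambda\uparrow\tld{w}_{\lambda'}w$, and ``one may then take $w=1$'' is unjustified, since $\lambda'+\pi(\nu')\in\tld{w}_{\lambda'}W\cdot C_0$ does not put it in $\tld{w}_{\lambda'}\cdot C_0$. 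For the second bullet: hypothesis \ref{item:small} only supplies an \emph{upper} margin (pairings stay below $p$); the genuine difficulty is the \emph{lower} bound $\langle\lambda_0+\pi\tld{w}_h\tld{w}_\lambda(0)+\pi w_0w_\lambda\tld{w}^{-1}(0)+\eta,\alpha^\vee\rangle>0$, and since $\lambda_0$ is merely in $C_0$ with no depth, there is no margin there to absorb a perturbation that is only ``dominant up to an error of size $m$.'' Your appeal to Lemma \ref{lemma:facet} and a fixed point $v_0$ also smuggles in a hypothesis $p>(h_\eta+1)^2$ that this lemma does not have.

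The paper's proof avoids all of this by a sandwich argument you do not have. One computes $\lambda'_0\defeq\tld{w}_{\lambda'}^{-1}\cdot\lambda'$ in two ways: as $\sigma_1\cdot(\lambda_0-w_\lambda^{-1}\pi(\nu'))$ from \ref{item:cover} and as $\sigma_2\cdot(\lambda_0+\pi\tld{w}_h\tld{w}_\lambda(0)+\pi w_0w_\lambda\tld{w}^{-1}(0))$ from \ref{item:dominant}--\ref{item:jantzen} (both via Lemma \ref{lemma:LAP}, after checking via \ref{item:small} and a direct estimate $h_{\tld{w}(0)}\leq h_\eta$ that each argument lies in $W\cdot\ovl{C}_0$). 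Equating the two gives \eqref{eqn:reflection}; the crucial extra input is the chain \eqref{eqn:rootorder}, which uses $t_\nu\tld{w}_\lambda(0)\leq\tld{w}_{\lambda'}(0)$ and $\tld{w}(0)\leq\tld{w}_h\tld{w}_{\lambda'}(0)$ (the latter from the second LAP application, as $0$ lies in the closure of the facet of $\lambda'_0$) to show $\lambda_0\leq\sigma_2^{-1}\sigma_1\cdot\lambda_0$, forcing $\sigma_1=\sigma_2$ and the exact identity $w_\lambda^{-1}\pi(\nu')=\pi w_0\nu$. Hence the perturbation equals $-\pi w_0\nu$, a genuinely dominant weight, which yields the lower bound for the second bullet for free, and $\lambda_0-\pi w_0\nu\in C_0$ then forces $\sigma_1=\sigma_2=1$, which is exactly what makes \eqref{eqn:uparrowword} deliver the third bullet. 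This identity, and the root-order comparison that produces it, is the missing idea in your sketch.
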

\begin{proof}
Let $w_\lambda\in W$ be the image of $\tld{w}_\lambda$. 
Then 
$$
\langle \lambda_0 - w_\lambda^{-1}\pi(\nu')+\eta,\alpha^\vee\rangle < p - h_\eta-\max_{v\in \ovl{A}_0} h_{\tld{w}_h\tld{w}_\lambda(v)}+h_\nu \leq p
$$ 
for all roots $\alpha$ by \ref{item:small}, \ref{item:cover}, and Lemma \ref{lemma:packet}, so that $\sigma_1\cdot (\lambda_0 - w_\lambda^{-1}\pi(\nu')) \in \ovl{C}_0$ for some $\sigma_1\in W$. 
Letting $\lambda'_0 \defeq \tld{w}_{\lambda'} ^{-1}\cdot \lambda' \in \ovl{C}_0$, Lemma \ref{lemma:LAP} with $\lambda_0$, $\mu_0$, $\tld{w}_\lambda$, and $\tld{w}_\mu$ taken to be $\sigma_1\cdot (\lambda_0 - w_\lambda^{-1}\pi(\nu'))$, $\lambda_0'$, $\pi(t_\nu\tld{w}_\lambda\sigma_1^{-1})$, and $\pi(\tld{w}_{\lambda'})$ implies that $\lambda'_0 = \sigma_1\cdot (\lambda_0 - w_\lambda^{-1}\pi(\nu'))$. 
Since 
\begin{equation}\label{eqn:uparrowword}
t_\nu\tld{w}_\lambda \sigma_1^{-1}\cdot \lambda_0' \uparrow \tld{w}_{\lambda'} \cdot \lambda_0', 
\end{equation}
we also have that $t_\nu\tld{w}_\lambda(0) \leq \tld{w}_{\lambda'}(0)$. 

Next, we claim that $h_{\tld{w}(0)} \leq h_\eta$. 
Let $w\in W$ be the image of $\tld{w}$ and suppose that $\alpha$ is a root so that $\langle \tld{w}(0),\alpha^\vee\rangle = h_{\tld{w}(0)}$. 
Then we claim that
\begin{align*}
(p-1)h_{\tld{w}(0)} - (p-1-h_\eta) &\leq \langle p\tld{w}(0),\alpha^\vee\rangle+\langle \pi w_0w_\lambda \tld{w}^{-1}(0),w^{-1}\alpha^\vee\rangle \\
&\quad + \langle \lambda_0 + \eta,w^{-1}\alpha^\vee\rangle + \langle \pi\tld{w}_h\tld{w}_\lambda(0),w^{-1}\alpha^\vee \rangle \\
&= \langle p\tld{w}(0)+w(\lambda_0 + \eta + \pi\tld{w}_h\tld{w}_\lambda(0)+\pi w_0w_\lambda \tld{w}^{-1}(0)),\alpha^\vee \rangle \\
&= \langle \tld{w}\cdot (\lambda_0 + \pi \tld{w}_h\tld{w}_\lambda(0)+\pi w_0w_\lambda \tld{w}^{-1}(0))+\eta,\alpha^\vee \rangle \\
&\leq \langle \tld{w}\cdot (\lambda_0 + \pi \tld{w}_h\tld{w}_\lambda(0)+\pi w_0w_\lambda \tld{w}^{-1}(0))+\eta,\alpha_0^\vee \rangle \\
&\leq \langle \tld{w}_h\cdot \lambda' +\eta,\alpha_0^\vee\rangle \\ 
&\leq (p-1)h_\eta
\end{align*}
where $\alpha_0$ is some highest root. Indeed, 
\begin{itemize}
\item the first inequality follows from \ref{item:small}; 
\item the second inequality follows from  \ref{item:dominant}; 
\item the third inequality follows from \ref{item:jantzen} and the dominance of $\alpha_0$; and  
\item the final inequality follows from the fact that $\lambda' \in X_1(T)$. 
\end{itemize}
Thus, $h_{\tld{w}(0)} \leq \frac{p-2}{p-1}h_\eta+1 < h_\eta+1$, and the claim follows. 

From the previous claim and \ref{item:small}, we have $\langle \lambda_0+\eta+\pi\tld{w}_h\tld{w}_\lambda(0)+\pi w_0w_\lambda\tld{w}^{-1}(0),\alpha^\vee\rangle < p$ for all roots $\alpha$. 
Then $\sigma_2\cdot (\lambda_0+\pi\tld{w}_h\tld{w}_\lambda(0)+\pi w_0w_\lambda\tld{w}^{-1}(0)) \in \ovl{C}_0$ for some $\sigma_2\in W$ and in particular $\tld{w}(0)\in X(T)^+$. 
\ref{item:jantzen} and Lemma \ref{lemma:LAP} with $\lambda_0$, $\mu_0$, $\tld{w}_\lambda$, and $\tld{w}_\mu$ taken to be $\sigma_2\cdot (\lambda_0+\pi\tld{w}_h\tld{w}_\lambda(0)+\pi w_0w_\lambda\tld{w}^{-1}(0))$, $\lambda_0'$, $\pi(\tld{w}\sigma_2^{-1})$, and $\pi(\tld{w}_h \tld{w}_{\lambda'})$ then imply that $\lambda'_0 = \sigma_2\cdot (\lambda_0+\pi\tld{w}_h\tld{w}_\lambda(0)+\pi w_0w_\lambda\tld{w}^{-1}(0))$ and $\tld{w}(0) \leq \tld{w}_h\tld{w}_{\lambda'}(0)$ (as $0$ lies in the closure of the facet determined by $\lambda'_0$). 
Putting things together, we have $\lambda_0+\pi\tld{w}_h\tld{w}_\lambda(0)+\pi w_0w_\lambda\tld{w}^{-1}(0) = \sigma_2^{-1}\sigma_1 \cdot (\lambda_0 - w_\lambda^{-1}\pi(\nu'))$, or equivalently that 
\begin{equation}\label{eqn:reflection}
\lambda_0+\pi\tld{w}_h\tld{w}_\lambda(0)+\pi w_0w_\lambda\tld{w}^{-1}(0) + \sigma_2^{-1}\sigma_1 w_\lambda^{-1}\pi(\nu') = \sigma_2^{-1}\sigma_1 \cdot \lambda_0. 
\end{equation}
Now
\begin{align}\label{eqn:rootorder}
\nonumber \tld{w}_h\tld{w}_\lambda(0)+ w_0w_\lambda\tld{w}^{-1}(0) + \pi^{-1}(\sigma_2^{-1}\sigma_1 w_\lambda^{-1})(\nu') &\geq \tld{w}_h\tld{w}_\lambda(0)+ w_0w_\lambda\tld{w}^{-1}(0) + w_0 \nu \\
&\geq w_0 \nu + \tld{w}_h\tld{w}_\lambda(0)- \tld{w}(0) \\
\nonumber &= \tld{w}_ht_\nu\tld{w}_\lambda(0)- \tld{w}(0) \\
\nonumber &\geq 0 
\end{align}
where 
\begin{itemize}
\item the second inequality uses that $\tld{w}(0) \geq -w_0w_\lambda\tld{w}^{-1}(0)$ since both sides are in the same $W$-orbit and $\tld{w}(0) \in X(T)^+$; and 
\item the last inequality uses the inequalities $t_\nu\tld{w}_\lambda(0) \leq \tld{w}_{\lambda'}(0)$ and $\tld{w}(0) \leq \tld{w}_h\tld{w}_{\lambda'}(0)$ proven in the first and third paragraphs, respectively. 
\end{itemize}
\eqref{eqn:reflection} and \eqref{eqn:rootorder} imply that $\lambda_0 \leq \sigma_2^{-1}\sigma_1\cdot \lambda_0$ so that $\sigma_1=\sigma_2$ and $w_\lambda^{-1}\pi(\nu') = \pi w_0\nu$. 
In particular, $\lambda_0 - w_\lambda^{-1}\pi(\nu') = \lambda_0-\pi w_0\nu \in C_0$ by \ref{item:small} and Lemma \ref{lemma:packet}\ref{lemma:packet:it:2} (which applies by hypothesis \ref{item:cover}). 
By definition of $\sigma_1$, we conclude that $\sigma_1 = 1$ and thus $\sigma_2 = 1$. 

We now deduce the three desired conclusions. 
First, we have $\lambda'_0 = \lambda_0 - w_\lambda^{-1}\pi(\nu') = \lambda_0+\pi\tld{w}_h\tld{w}_\lambda(0)+\pi w_0w_\lambda\tld{w}^{-1}(0) \in C_0$. 
In conjunction with \eqref{eqn:uparrowword}, this gives the inequality $t_\nu \tld{w}_\lambda \uparrow \tld{w}_{\lambda'}$. 

Finally, $\lambda'+\pi(\nu') = \tld{w}_{\lambda'}\cdot (\lambda_0-w_\lambda^{-1}\pi(\nu')+w_{\lambda'}^{-1}\pi(\nu'))$. 
Then $\lambda_0-w_\lambda^{-1}\pi(\nu')+w_{\lambda'}^{-1}\pi(\nu')\in W\cdot \ovl{C}_0$ by \ref{item:small} and Lemma \ref{lemma:packet} so that $\lambda'+\pi(\nu')$, which is linked to $\lambda_0\in C_0$, is in $\tld{w}_{\lambda'}W\cdot C_0$. 
\end{proof}

\begin{thm}\label{thm:JHfactor}
Suppose that the center $Z$ of $G$ is connected. 
Let $\tld{w}_\lambda \in \tld{W}_1$, $s \in W$, and $\lambda_0 \in C_0$. 
Suppose further that either 
\begin{enumerate}[label=(\arabic*)]
\item \label{item:alldirection} $p>(h_\eta+1)^2$ and that $\lambda_0$ is $\max_{v\in \ovl{A}_0} h_{\tld{w}_h\tld{w}_\lambda(v)}$-deep in $C_0$; or 
\item \label{item:domdirection} $\langle \lambda_0+\eta,\alpha^\vee \rangle < p - h_\eta-\max_{v\in \ovl{A}_0} h_{\tld{w}_h\tld{w}_\lambda(v)}$ for all roots $\alpha$ and $s = \pi(w_0w_\lambda)$. 
\end{enumerate}
Then $F(\tld{w}_\lambda \cdot \lambda_0)$ is a Jordan--H\"older factor of $\ovl{R}_s(\lambda_0+\eta-s\pi(\tld{w}_h\tld{w}_\lambda)^{-1}(0))$ with multiplicity one. 
\end{thm}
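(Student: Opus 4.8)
Write $\lambda := \tld w_\lambda\cdot\lambda_0$ and $\mu := \lambda_0+\eta-s\pi(\tld w_h\tld w_\lambda)^{-1}(0)$; the goal is $[\ovl R_s(\mu):F(\lambda)]_\Gamma = 1$. The first point to record is that $\mu-\eta\in C_0$: since $h_{s\pi(\tld w_h\tld w_\lambda)^{-1}(0)} = h_{\tld w_h\tld w_\lambda(0)}\le \max_{v\in\ovl A_0}h_{\tld w_h\tld w_\lambda(v)}\le h_\eta$ by Lemma \ref{lemma:packet}, the weight $\mu-\eta$ differs from $\lambda_0\in C_0$ by a shift of small $h$-norm; in case \ref{item:alldirection} this shift keeps it inside $C_0$ by the depth hypothesis, and in case \ref{item:domdirection} one computes directly (using $s = \pi(w_0w_\lambda)$) that $\mu-\eta = \lambda_0+\pi(\tld w_h\tld w_\lambda(0))$, which lies in $C_0$ by the smallness hypothesis. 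The plan is then to reduce, via \eqref{eq:mults}, to two statements: \emph{(A)} the ``packet collapse'' --- if $\lambda'\in X_1(T)$ and $\nu\in X(T)^+$ satisfy $[L(\lambda')\otimes L(\pi(\nu)):L(\lambda+p\nu)]_G\ne 0$ and $[\ovl R_s(\mu):F(\lambda')]_\Gamma\ne 0$, then $\lambda'\equiv\lambda\pmod{(p-\pi)X^0(T)}$; and \emph{(B)} $\dim\Hom_\Gamma(Q_\lambda,\ovl R_s(\mu)) = 1$. Granting these, \emph{(A)} together with Proposition \ref{prop:packet} collapses the sum in \eqref{eq:mults} to the single term $[\ovl R_s(\mu):F(\lambda)]_\Gamma$, which \emph{(B)} identifies with $1$.

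For \emph{(A)}, I would feed the hypotheses into Lemma \ref{lemma:vertex}. Lemma \ref{lemma:packet}\ref{lemma:packet:it:1} applied to $[L(\lambda')\otimes L(\pi(\nu)):L(\lambda+p\nu)]_G\ne 0$ yields $t_\nu\tld w_\lambda\cdot\lambda_0 = \lambda+p\nu\uparrow\lambda'+\pi(\nu')$ for some $\nu'\in\Conv(\nu)$ --- this is \ref{item:packet0}. From $\Hom_\Gamma(P_{\lambda'},\ovl R_s(\mu))\ne 0$ and Theorem \ref{thm:jantzen} we get $\tld w\in\tld W$ with $\tld w\cdot(\mu-\eta+s\pi(\tld w^{-1}(0)))+\eta$ dominant and $\uparrow\tld w_h\cdot\lambda'$; rewriting $\mu-\eta = \lambda_0-s\pi(\tld w_h\tld w_\lambda)^{-1}(0)$ this is exactly \ref{item:genericJH} (and, in case \ref{item:domdirection}, rewrites as \ref{item:dominant} and \ref{item:jantzen} of Lemma \ref{lemma:domdirection}). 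Now I would invoke Lemma \ref{lemma:anydirection} in case \ref{item:alldirection}, and Lemma \ref{lemma:domdirection} in case \ref{item:domdirection}, to obtain the remaining hypotheses \ref{item:C0} and \ref{item:Weyl} of Lemma \ref{lemma:vertex}; that lemma then gives $\nu\in X^0(T)$ and $t_\nu\tld w_\lambda\cdot\lambda_0 = \lambda'+\pi(\nu)$, i.e.\ $\lambda' = \lambda+(p-\pi)\nu$.

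For \emph{(B)}, the distinguished element $\tld w := \tld w_h\tld w_\lambda$ lies in $\tld W_1\subseteq\tld W^+$ (as $\tld w_h^2\in\Omega$), satisfies $\mu-\eta+s\pi(\tld w^{-1}(0)) = \lambda_0$, and $\tld w\cdot\lambda_0 = \tld w_h\cdot\lambda$, so the criterion of Theorem \ref{thm:jantzen} holds with $\tld w\cdot(\mu-\eta+s\pi(\tld w^{-1}(0))) = \tld w_h\cdot\lambda$ --- an \emph{equality} in the $\uparrow$-order. Hence the $\nu_0 := (\tld w_h\tld w_\lambda)^{-1}(0)$ term of the Jantzen sum $\dim\Hom_\Gamma(Q_\lambda,\ovl R_s(\mu)) = \sum_\nu[\widehat Z(1,\mu+(s\pi-p)\nu+(p-1)\eta):\widehat L(1,\lambda)]_{G_1T}$ is nonzero. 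Running the argument of \emph{(A)} with $\lambda' = \lambda$ and trivial packet data ($\nu = \nu' = 0$) --- so that Lemmas \ref{lemma:anydirection}/\ref{lemma:domdirection} and Lemma \ref{lemma:vertex} (whose proof identifies $\tld w = \tld w_h t_\nu\tld w_\lambda$) apply and force $\tld w = \tld w_h t_0\tld w_\lambda = \tld w_h\tld w_\lambda$ for any $\tld w$ realizing a nonzero term --- shows $\nu_0$ is the \emph{only} nonzero term. Finally, the equality $\tld w\cdot(\mu-\eta+s\pi(\tld w^{-1}(0))) = \tld w_h\cdot\lambda$ places $\lambda$ in the extremal position relative to $\widehat Z(1,\mu+(s\pi-p)\nu_0+(p-1)\eta)$, so $\widehat L(1,\lambda)$ occurs there with multiplicity one; thus $\dim\Hom_\Gamma(Q_\lambda,\ovl R_s(\mu)) = 1$.

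The main obstacle is \emph{(A)}: the point is that the weakened depth/smallness hypotheses of \ref{item:alldirection}/\ref{item:domdirection} still force the geometric conditions \ref{item:C0} and \ref{item:Weyl} required by Lemma \ref{lemma:vertex}, and verifying this is precisely the content of the delicate alcove estimates in Lemmas \ref{lemma:anydirection} and \ref{lemma:domdirection}. I expect that matching their inputs and outputs to the needs of Lemma \ref{lemma:vertex}, and treating the two cases uniformly, will be the bulk of the argument; the multiplicity-one assertion for the surviving baby Verma term is a secondary point, resting on the extremal position of $\lambda$ forced by the equality in the $\uparrow$-order.
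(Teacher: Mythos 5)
Your proposal is correct and follows essentially the same route as the paper: reduce via Corollary \ref{cor:jantzen} to the packet-collapse statement (your (A)), which the paper likewise establishes by feeding Lemma \ref{lemma:packet}\ref{lemma:packet:it:1} and Theorem \ref{thm:jantzen} through Lemmas \ref{lemma:anydirection}/\ref{lemma:domdirection} into Lemma \ref{lemma:vertex}, and to the computation $\dim\Hom_\Gamma(Q_\lambda,\ovl{R}_s(\mu))=1$. The only cosmetic difference is in your step (B), where the paper invokes Corollary \ref{cor:jantzendeep} with $\tld{w}=\tld{w}_h\tld{w}_\lambda$ and evaluates the surviving baby Verma multiplicity via \cite[9.16(5)]{RAGS}, whereas you isolate the unique nonzero term by re-running the Lemma \ref{lemma:vertex} machinery with trivial packet data --- an equivalent and arguably more self-contained route, since it sidesteps the $h_\eta$-depth hypothesis in the statement of Corollary \ref{cor:jantzendeep}.
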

\begin{proof}
Let $\lambda \defeq \tld{w}_\lambda \cdot \lambda_0$ and $\mu\defeq\lambda_0+\eta-s\pi(\tld{w}_h\tld{w}_\lambda)^{-1}(0)$.
We apply Corollary \ref{cor:jantzendeep} with $\tld{w} \defeq \tld{w}_h\tld{w}_\lambda$ to get
\begin{align*}
\dim \Hom_\Gamma(Q_\lambda,\ovl{R}_s(\mu)) &= [\widehat{Z}(1,\lambda_0-p(\tld{w}_h\tld{w}_\lambda)^{-1}(0))+p\eta):\widehat{L}(1,\lambda)]_{G_1T} \\
&= [\widehat{Z}(1,w_\lambda(\lambda_0-p(\tld{w}_h\tld{w}_\lambda)^{-1}(0)+\eta) + (p-1)\eta) :\widehat{L}(1,\lambda)]_{G_1T} \\
&= [\widehat{Z}(1,\lambda) :\widehat{L}(1,\lambda)]_{G_1T}\\
&= 1 
\end{align*}
where $w_\lambda \in W$ is the image of $\tld{w}_\lambda$, the second equality follows from \cite[9.16(5)]{RAGS} and the fourth equality follows for instance by using that $[\widehat{Z}(1,\lambda) :\widehat{L}(1,\lambda)]_{G_1T}$ is nonzero and $\lambda$ appears with multiplicity one in both $\widehat{Z}(1,\lambda)|_T$ and $\widehat{L}(1,\lambda)|_T$. 
We claim that if $\lambda'\in X_1(T)$ and $[L(\lambda') \otimes L(\pi(\nu)):L(\lambda+p\nu)]_G\neq 0$ for some $\nu \in X(T)^+$ and $\Hom_\Gamma(Q_{\lambda'},\ovl{R}_s(\mu)) \neq 0$, then $\nu \in X^0(T)$. 
This will finish the proof: first note that $[\ovl{R}_s(\mu):F(\lambda')]_\Gamma\neq 0$ implies $\Hom_\Gamma(Q_{\lambda'},\ovl{R}_s(\mu))\neq 0$.
Thus the claim shows that the only term in the first sum in equation \eqref{eq:mults} of Corollary \ref{cor:jantzen} that contributes is the term where $\lambda'\equiv \lambda$ modulo $(p-\pi)X^0(T)$ and that both factors of this term are $1$.

We now prove the claim.
Suppose that $\lambda'\in X_1(T)$, $[L(\lambda') \otimes L(\pi(\nu)):L(\lambda+p\nu)]_G\neq 0$ for some $\nu \in X(T)^+$, and $\Hom_\Gamma(Q_{\lambda'},\ovl{R}_s(\mu))\neq 0$. 
Then Lemma \ref{lemma:packet} implies that there exists $\nu'\in \Conv(\nu)$ such that $\lambda+p\nu \uparrow \lambda'+\pi(\nu')$, and Theorem \ref{thm:jantzen} implies that there exists $\tld{w} \in \tld{W}$ such that $\tld{w}\cdot (\mu-\eta+s\pi\tld{w}^{-1}(0))+\eta \in X(T)^+$ and $\tld{w}\cdot (\mu-\eta+s\pi\tld{w}^{-1}(0)) \uparrow \tld{w}_h \cdot \lambda'$. 
Lemmas \ref{lemma:anydirection} and \ref{lemma:domdirection} imply conditions \ref{item:Weyl} and \ref{item:C0} of Lemma \ref{lemma:vertex}. 
Finally, Lemma \ref{lemma:vertex} implies that $\nu\in X^0(T)$. 
\end{proof}

\section{Applications to weight elimination}

Let $q$ be a power of $p$ and $K = W(\F_q)[p^{-1}]$. 
Let $\Q_p\subset E\subset \ovl{\Q}_p$ be a sufficiently large finite extension of $\Q_p$. 
Let $\cO$ be the ring of integers of $E$ and $\F$ the residue field. 
Assume that any homomorphism $K \ra \ovl{\Q}_p$ factors through $E$. 
We now take $G_0$ to be $\Res_{\F_q/\F_p} \GL_{n/\F_q}$. 
Let $^L G \defeq \prod_{K\ra E} \GL_{n/\Z} \rtimes \Gal(E/\Q_p)$. 
Let $G_{\Q_p} \defeq \Gal(\ovl{\Q}_p/\Q_p)$ and $G_K \defeq \Gal(\ovl{K}/K)$ with inertia subgroups $I_{\Q_p}$ and $I_K$, respectively. 
Restriction and projection gives a bijection between conjugacy classes of continuous $L$-homomorphisms ${}^L\rhobar: G_{\Q_p} \ra\, ^L G(\F)$ and conjugacy classes of continuous homomorphisms $\rhobar: G_K \ra \GL_n(\F)$. 

Let $X_{\mathrm{reg}}^*(T) \subset X_1(T)$ denote the subset of $\lambda \in X_1(T)$ such that $\langle \lambda,\alpha^\vee\rangle<p-1$ for all simple roots $\alpha$. 
We say that a simple $\F[\GL_n(\F_q)]$-module is \emph{regular} if its highest weight is in $X_{\mathrm{reg}}^*(T)$. 
The $p$-dot action of $\tld{w}_h$ defines a self-bijection $X_{\mathrm{reg}}^*(T) \ra X_{\mathrm{reg}}^*(T)$. 
Let $\cR$ be the corresponding self-bijection on the set isomorphism classes of regular simple $\F[\GL_n(\F_q)]$-modules, that is $\cR(F(\lambda)) = F(\tld{w}_h\cdot \lambda)$. 
For a conjugacy class of tame continuous homomorphisms $\rhobar: G_K \ra \GL_n(\F)$, let $W^?(\rhobar)$ be the set $W^?({}^L\rhobar|_{I_{\Q_p}})$ in \cite[Definition 9.2.5]{GHS} where ${}^L\rhobar$ is the $L$-parameter corresponding to $\rhobar$. 
Outside degenerate cases which are irrelevant in our context, the set $W^?(\rhobar)$ has the following concrete description \cite[Proposition 9.2.3]{GHS}: we can write ${}^L\rhobar|_{I_{\Qp}}$ (in possibly several ways) as an explicit representation $\tau(s,\mu)$ depending on $(s,\mu-\eta)\in W\times (C_0\cap X^*(T))$ \cite[Proposition 9.2.3]{GHS}, and the set $W^?(\rhobar)$ is $\cR(\JH(\ovl{R}_s(\mu)))$ (in this case we say that $\rhobar$ is $m$-generic if we can choose $\mu-\eta$ to be $m$-deep in $C_0$).

It is conjectured \cite{GHS} that the set $W^?(\rhobar)$ controls weights of mod $p$ automorphic forms for any globalization of $\rhobar$ e.g.~mod $p$ Langlands parameters contributing to spaces of mod $p$ algebraic modular forms on definite unitary groups as in \S \ref{intro:app}.

In any such context, establishing the upper bound given by $W^?(\rhobar)$ is referred to as ``weight elimination''.
In our previous work \cite{LLL}, we establish weight elimination in an axiomatic framework that applies to many global contexts (for instance the one in Theorem \ref{thm:WE:intro}) under the hypothesis that $\rhobar$ is $(6n-2)$-generic.

The method of \emph{loc.~cit}.~was to combine constraints from $p$-adic Hodge theory with generic decomposition patterns of Deligne--Lusztig representations.
Our new results on the latter allow us to improve our earlier axiomatic weight elimination results to the following
\begin{thm}
Let $\rhobar: G_K \ra \GL_n(\F)$ be a continuous homomorphism. Suppose that $W(\rhobar)$ is a set of isomorphism classes of simple $\F[\GL_n(\F_q)]$-modules, $w\in W$, and $\nu\in X^*(T)$ such that if $W(\rhobar) \cap \JH(\ovl{R}_w(\nu)) \neq \emptyset$ and either
\begin{itemize}
\item $\tau(w,\nu)$ is regular (i.e.~multiplicity free); or
\item $w = 1$;
\end{itemize}
then $\rhobar$ has a potentially semistable lift of type $(\eta,\tau(w,\nu))$.
Assume further that $\rhobar^{\semis}|_{I_{K}}$ is $(2n+1)$-generic.

Then $W(\rhobar) \subset W^?(\rhobar^{\semis})$. 
\end{thm}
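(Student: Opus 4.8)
The plan is to rerun the axiomatic weight elimination argument of \cite{LLL} with our optimal decomposition results in place of its $(2h-1)$-generic inputs. The abstract combinatorial ``admissibility'' machinery and the $p$-adic Hodge theory of \emph{loc.\,cit.}\ are left untouched; what changes is the supply of Deligne--Lusztig representations we can control and, consequently, the genericity bookkeeping. Since $G_0=\Res_{\F_q/\F_p}\GL_{n/\F_q}$ has root system a product of copies of $A_{n-1}$ we have $h_\eta=n-1$, and by the genericity hypothesis we may write the tame parameter of $\rhobar^{\semis}|_{I_K}$ as $\tau(\bar s,\bar\mu)$ with $\bar\mu-\eta$ being $(2n+1)$-deep in $C_0$; as recalled above, $W^{?}(\rhobar^{\semis})=\cR(\JH(\ovl{R}_{\bar s}(\bar\mu)))$. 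Because $2n+1\geq h_\eta$, Theorem~\ref{thm:main} describes $\JH(\ovl{R}_{\bar s}(\bar\mu))$ completely: $F(\kappa)$ occurs iff there are $\tld{w}\in\tld{W}^+$, $\tld{w}_\kappa\in\tld{W}_1$ with $\tld{w}\uparrow\tld{w}_h\tld{w}_\kappa$ and $\kappa=\tld{w}_\kappa\cdot(\bar\mu-\eta+\bar s\pi(\tld{w}^{-1}(0)))$. So it suffices to show, for each $F(\lambda)\in W(\rhobar)$, that $\cR^{-1}(F(\lambda))$ lies in this explicitly described set.

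Fix $F(\lambda)\in W(\rhobar)$ and write $\cR^{-1}(F(\lambda))=F(\mu_\lambda)$ with $\mu_\lambda=\tld{w}_\lambda\cdot\mu_0$, $\mu_0\in\ovl{C}_0$, $\tld{w}_\lambda\in\tld{W}_1$ of image $w_\lambda\in W$. The first step is to manufacture Deligne--Lusztig representations containing $F(\lambda)$, supported in the base alcove and of controlled depth. If $\mu_0$ is $\max_{v\in\ovl{A}_0}h_{\tld{w}_h\tld{w}_\lambda(v)}$-deep in $C_0$, Theorem~\ref{thm:JHfactor}\ref{item:alldirection} supplies, for \emph{every} $s\in W$, a weight $\mu^{(s)}\defeq\mu_0+\eta-s\pi(\tld{w}_h\tld{w}_\lambda)^{-1}(0)$ with $\mu^{(s)}-\eta\in C_0$ and $F(\lambda)\in\JH(\ovl{R}_s(\mu^{(s)}))$ of multiplicity one; if $\mu_0$ is shallower, Theorem~\ref{thm:JHfactor}\ref{item:domdirection} still supplies one such pair (with $s=\pi(w_0w_\lambda)$) under a ``smallness'' hypothesis on $\mu_0$ which, as noted in the introduction, can always be arranged in type $A$ by choosing the representative $\tld{w}_\lambda$ appropriately. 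In both cases the multiplicity-one assertion is essential: it lets us work directly with $\ovl{R}_s(\mu^{(s)})$ without untangling the packet phenomenon (that $Q_\lambda$ may decompose, so that $[\ovl{R}_s(\mu):F(\lambda)]$ is only pinned down by a weighted sum over a packet of weights), which is exactly the difficulty this paper resolves. For each $\mu^{(s)}$ so produced with $\tau(s,\mu^{(s)})$ regular (automatic once $\mu^{(s)}-\eta$ is $p$-regular, in the Coxeter-type cases at hand) or $s=1$, the hypothesis of the theorem — applicable since $F(\lambda)\in W(\rhobar)\cap\JH(\ovl{R}_s(\mu^{(s)}))$ — provides a potentially semistable lift of $\rhobar$ of type $(\eta,\tau(s,\mu^{(s)}))$.

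Next I would feed these lifts into the $p$-adic Hodge theory of \cite{LLL}: existence of a potentially semistable lift of $\rhobar$ of type $(\eta,\tau(s,\mu^{(s)}))$ translates into a combinatorial admissibility constraint relating the shape $(\bar s,\bar\mu)$ of $\rhobar^{\semis}|_{I_K}$ to $(s,\mu^{(s)})$. Running over the available $s$, combining these constraints, and decoding them against the explicit description of $\JH(\ovl{R}_{\bar s}(\bar\mu))$ from Theorem~\ref{thm:main} should produce $\tld{w}\in\tld{W}^+$ and $\tld{w}_\mu\in\tld{W}_1$ with $\tld{w}\uparrow\tld{w}_h\tld{w}_\mu$ and $\mu_\lambda=\tld{w}_\mu\cdot(\bar\mu-\eta+\bar s\pi(\tld{w}^{-1}(0)))$, i.e.\ $\cR^{-1}(F(\lambda))\in\JH(\ovl{R}_{\bar s}(\bar\mu))$, hence $F(\lambda)\in W^{?}(\rhobar^{\semis})$. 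When $\mu_0$ is so shallow that this conclusion is impossible for depth reasons — every factor of $\ovl{R}_{\bar s}(\bar\mu)$ is $\gtrsim n$-deep once $\rhobar^{\semis}$ is $(2n+1)$-generic — the very same constraint instead shows that no such $F(\lambda)$ lies in $W(\rhobar)$, which is what ``elimination'' means in that range.

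I expect the crux to be the genericity bookkeeping that makes $2n+1$ — rather than the $6n-2$ of \cite{LLL} — suffice; this is precisely where the optimality of Theorems~\ref{thm:main} and~\ref{thm:JHfactor} is used. One must simultaneously check: (i) that $\bar\mu-\eta$ being $(2n+1)$-deep both licenses Theorem~\ref{thm:main} for $\ovl{R}_{\bar s}(\bar\mu)$ and forces the weights it ``sees'' to be deep enough in their alcoves for Theorem~\ref{thm:JHfactor} to apply; (ii) that the translation $-s\pi(\tld{w}_h\tld{w}_\lambda)^{-1}(0)$ linking $\mu^{(s)}$ to $\mu_0$ — of size at most $h_\eta=n-1$, the small translation flagged in the introduction — keeps $\mu^{(s)}-\eta$ inside $C_0$ and deep enough for the $p$-adic Hodge theoretic constraint to be the sharp one; and (iii) that the ``smallness'' hypothesis in Theorem~\ref{thm:JHfactor}\ref{item:domdirection} can be met for all relevant $\lambda$ using the type-$A$ freedom to adjust $\tld{w}_\lambda$ within its coset. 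All three are alcove-geometry estimates of exactly the flavor of Lemmas~\ref{lemma:facet}, \ref{lemma:apriori} and~\ref{lemma:packet}, and the improvement is the assertion that our sharp inputs make them fit under the threshold $2n+1$.
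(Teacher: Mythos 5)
Your outline matches the paper's architecture — use Theorem \ref{thm:JHfactor} to manufacture Deligne--Lusztig representations containing $F(\lambda)$, convert the resulting potentially semistable lifts into combinatorial constraints on $\rhobar|_{I_K}$, and decode those against the generic decomposition — but as written it is a plan with the two load-bearing steps deferred, and it contains a setup error. First, the setup error: Theorem \ref{thm:JHfactor} must be applied to $\lambda$ itself, written as $\tld{w}_\lambda\cdot\lambda_0$ with $\lambda_0\in C_0$, since that is what produces representations $\ovl{R}_s(\mu^{(s)})$ with $F(\lambda)\in\JH(\ovl{R}_s(\mu^{(s)}))$; you instead decompose the highest weight of $\cR^{-1}(F(\lambda))$, which would yield factors $\cR^{-1}(F(\lambda))$ rather than $F(\lambda)$, and you place $\mu_0$ only in $\ovl{C}_0$. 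Getting $\lambda_0$ into the \emph{open} alcove requires knowing $\lambda$ is $p$-regular, which is a genuine preliminary input (\cite[Theorem 8]{Enns}, using the $(2n+1)$-genericity), as is the reduction to semisimple $\rhobar$ via \cite[Lemma 5]{Enns}; neither appears in your proposal.

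Second, the deferred steps are exactly where the theorem is proved. (a) The passage from ``$\rhobar$ has a potentially semistable lift of type $(\eta,\tau(w,\nu))$'' to the admissibility constraint $(t_\nu w)^{-1}t_\mu s\in\Adm(\eta)$ on a compatible lowest alcove presentation $(s,\mu)$ of $\rhobar|_{I_K}$ is not a citation to a black box: it requires choosing the compatible presentation (\cite[Lemma 2.4.4]{MLM}), an unramified base change to reduce to principal series types, \cite[Theorem 3.2.1]{LLL}, and then identifying the two resulting presentations of the same type via \cite[Lemma 4.2]{herzig-duke} together with Lemma \ref{lemma:presentation} of this paper — this last comparison is precisely where the depth $2n$ (hence the threshold $2n+1$) enters, i.e.\ the ``crux'' you flag but do not carry out. (b) The proof is organized around a dichotomy you only gesture at: one first shows \emph{by contradiction} that $\lambda_0$ is $(h_{\tld{w}_h\tld{w}_\lambda(0)}+1)$-deep — if not, Theorem \ref{thm:JHfactor}\ref{item:domdirection} (after adjusting $\lambda_0$ by $\Omega$ so that $\langle\lambda_0+\eta,\alpha^\vee\rangle<p-2h_\eta$, which is where $p>(2n+1)n$ is used) produces a single regular type $\tau(w,\nu)$ with $\nu-\eta$ not $(h_{\tld{w}_h\tld{w}_\lambda(0)}+2)$-deep, and the admissibility relation then forces $\mu-\eta$ to fail $(2h_\eta+2)$-depth, contradicting $(2n+1)=(2h_\eta+3)$-genericity. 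The contradiction runs through the depth of $\mu$ via $\Adm(\eta)$, not through a claim that ``every factor of $\ovl{R}_{\bar s}(\bar\mu)$ is deep.'' Only after this does one apply Theorem \ref{thm:JHfactor}\ref{item:alldirection} for \emph{all} $w\in W$ and invoke the argument of \cite[Lemma 4.1.10]{LLL} to land in $W^?(\rhobar^{\semis})$. Without (a) and (b) made precise, the proposal does not establish the stated bound.
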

\begin{proof}
We follow the general outline of \cite[\S 4.2]{LLL}.

By \cite[Lemma 5]{Enns}, if $\rhobar$ has a potentially semistable lift of type $(\eta,\tau(w,\nu))$, so does $\rhobar^{\semis}$. 
We then reduce to the case where $\rhobar$ is semisimple. 

Suppose that $\lambda \in X_1(T)$ with $F(\lambda) \in W(\rhobar)$. 
First, $\rhobar$ is $(2n+1)$-generic in the sense of \cite[Definition 2]{Enns} so that $\lambda$ is $p$-regular by \cite[Theorem 8]{Enns}. 
Then we can write $\lambda = \tld{w}_\lambda \cdot \lambda_0$ with $\lambda_0 \in C_0$ (and $\tld{w}_\lambda \in \tld{W}_1$). 

In order for a $(2n+1)$-generic Galois representation to exist at all, one must have $p > (2n+1)n$ by Remark \ref{rmk:depth}. 
Thus, we assume without loss of generality that $p>(2n+1)n$. 
Using this, after possibly replacing $\lambda_0$ by an element in $\Omega\cdot \lambda_0$, we can assume that $\langle \lambda_0 +\eta,\alpha^\vee\rangle < p - 2h_\eta$ for all $\alpha\in R$. 
We claim that $\lambda_0$ is $(h_{\tld{w}_h\tld{w}_\lambda(0)}+1)$-deep in $C_0$. 
Suppose otherwise. 
Then Theorem \ref{thm:JHfactor}\ref{item:domdirection} implies that $F(\lambda) \in \JH(\ovl{R}_w(\nu))$ with $w = \pi(w_0w_\lambda)$ and $\nu = \lambda_0+\eta-\pi(w_0w_\lambda(\tld{w}_h\tld{w}_\lambda)^{-1}(0))$. 
Since $\tld{w}_\lambda\in \tld{W}_1$, we have $0\leq \langle -w_0w_\lambda(\tld{w}_h\tld{w}_\lambda)^{-1}(0),\alpha^\vee\rangle \leq 1$ for all $\alpha\in \Delta$ and $\langle \lambda_0 +\eta-\pi(w_0w_\lambda(\tld{w}_h\tld{w}_\lambda)^{-1}(0)),\alpha^\vee\rangle < p - h_\eta$ for all $\alpha \in R$. 
We conclude that $\rhobar$ has a potentially semistable (and thus potentially crystalline) lift of type $(\eta,\tau)$ where $\tau$ is the (regular) inertial type $\tau(w,\nu)$ and $\nu-\eta$ is $1$-deep, but not $(h_{\tld{w}_h\tld{w}_\lambda(0)}+2)$-deep. 
We claim that $\rhobar|_{I_{K}} \cong \ovl{\tau}(s,\mu)$ for some $s\in W$ and $\mu\in X^*(T)$ such that 
\begin{equation}\label{eqn:adm}
(t_\nu w)^{-1}t_\mu s \in \Adm(\eta)\defeq \{\tld{w}\in \tld{W}\mid \tld{w}\leq t_{\sigma(\eta)}\textrm{ for some }\sigma\in W\}. 
\end{equation}
Given this claim, we deduce that $\mu-\eta$ is not $(2h_\eta+2)$-deep so that $\rhobar$ is not $(2h_\eta+3)$-generic. 
(Indeed, if $\mu-\eta$ is $(2h_\eta+2)$-deep, then \eqref{eqn:adm} implies that $\nu-\eta$ is $(h_\eta+2)$-deep and thus $(h_{\tld{w}_h\tld{w}_\lambda(0)}+2)$-deep.) 
This is a contradiction. 

We now prove the claim in the previous paragraph. 
There is a unique lowest alcove presentation $(s,\mu-\eta)$ of $\rhobar$ compatible in the sense of \cite[\S 2.4]{MLM} with the $1$-generic lowest alcove presentation $(w,\nu-\eta)$ of $\tau$ by \cite[Lemma 2.4.4]{MLM} (though it should be assumed in the cited lemma that the center $Z$ is connected). 
Moreover, $\mu-\eta$ is $2n$-deep by \cite[Proposition 2.2.15]{LLL}. 
Let $K'/K$ be a finite unramified extension so that the restriction $\rhobar'$ is a direct sum of characters and the base change inertial type $\tau'$ is principal series. 
Furthermore, $(w',\nu'-\eta')$ is a $1$-generic lowest alcove presentation for $\tau'$ where $w'_{j'} = w_{j'|_{K}}$ and $\nu'_{j'} = \nu_{j'|_K}$ for any embedding $j': K'\ra E$, and similarly $(s',\mu'-\eta')$ is a compatible $2n$-generic lowest alcove presentation of $\rhobar'$. 
\cite[Theorem 3.2.1]{LLL} implies that $\tau(s',\mu') \cong \tau(x',\xi')$ for some $\xi' \in (\Z^n)^{\Hom_{\Q_p}(K',E)}$ and $x' \in S_n^{\Hom_{\Q_p}(K',E)}$ such that $(t_{\nu'}w')^{-1}t_{\xi'} x' \in \Adm(\eta')$. 
In particular, $\xi'-\eta'$ is $(-n+2)$-deep in $C_0$. 
Since $\mu'-\eta'$ is $2n$-deep in $C_0'$, the centralizer of the semisimple element corresponding to $\theta_{s',\mu'}$ is a maximal torus by the proof of \cite[Lemma 10.1.10]{GHS}. 
Since the corresponding centralizer for $\theta_{x',\xi'}$ is conjugate, it is also a maximal torus. 
We conclude that both $(T_{s'},\theta_{s',\mu'})$ and $(T_{x'},\theta_{x',\xi'})$ are maximally split. 
By \cite[Proposition 9.2.1]{GHS}, $(T_{s'},\theta_{s',\mu'})$ and $(T_{x'},\theta_{x',\xi'})$ are $G'_0(\F_p)$-conjugate. 
By \cite[Lemma 4.2]{herzig-duke}, we have
\[
(x',\xi') = (\sigma s' \pi' \sigma^{-1} \pi'^{-1},\sigma(\mu^\prime) + (p-\sigma s' \pi' \sigma^{-1})\omega) 
\]
for some $\sigma \in S_n^{\Hom_{\Q_p}(K',E)}$ and $\omega \in (\Z^n)^{\Hom_{\Q_p}(K',E)}$. 
By Lemma \ref{lemma:presentation} taking $m=2n$, we have that $t_\omega\sigma \in \Omega'$. 
Since $\mu'-\xi' \in \Z R'$ and $\tld{W}'/W_a'$ is torsion-free, $t_\omega \sigma \in \Omega' \cap W_a' = 1$. 
Thus, $s' = x'$ and $\mu' = \xi'$ so that $(t_{\nu'} w')^{-1}t_{\mu'} s' \in \Adm(\eta')$ which implies that $(t_\nu w)^{-1}t_\mu s \in \Adm(\eta)$. 

Thus, $\lambda_0$ is $(h_{\tld{w}_h\tld{w}_\lambda(0)}+1)$-deep in $C_0$. 
Then $F(\lambda) \in \JH(\ovl{R}_w(\lambda_0+\eta-w\pi(\tld{w}_h\tld{w}_\lambda)^{-1}(0)))$ for all $w\in W$ by Theorem \ref{thm:JHfactor}\ref{item:alldirection} (recall that we are assuming that $p>(2n+1)n \geq n^2 = (h_\eta+1)^2$), which implies that $\rhobar$ has a potentially semistable (and thus potentially crystalline) lift of type $\tau_w \defeq \tau(w,\lambda_0+\eta-w\pi(\tld{w}_h\tld{w}_\lambda)^{-1}(0))$ for all $w\in W$. 
For all $w\in W$, $(w,\lambda_0-w\pi(\tld{w}_h\tld{w}_\lambda)^{-1}(0))$ is a $1$-generic lowest alcove presentation for $\tau_w$. 
The argument in the previous paragraph shows that $\rhobar|_{I_{K}} \cong \tau(s,\mu)$ for some $s\in W$ and $\mu\in X^*(T)$ with $(t_{\lambda_0+\eta-w\pi(\tld{w}_h\tld{w}_\lambda)^{-1}(0)}w)^{-1}t_\mu s \in \Adm(\eta)$ for all $w\in W$. 
Then the proof of \cite[Lemma 4.1.10]{LLL} implies that $F(\lambda) \in W^?(\rhobar)$. 
\end{proof}

\bibliography{Biblio}
\bibliographystyle{amsalpha}

\end{document}